\newtheorem{thm}{Theorem} 
\newtheorem{crl}{Corollary} 
\newtheorem{prp}{Proposition} 
\newtheorem{lm}{Lemma}
\newtheorem{dfn}{Definition} 
\newenvironment{proof}{\noindent{\bf Proof.}}{\noindent$\Box$\par\medskip}
\newcommand{\suba}{\mathrm {subalg}}
\newcommand{\rad}{\mathrm {rad}} 
\newcommand{\spa}{\mathrm {span}}
\newcommand{\ad}{\mathrm {ad}}
\newcommand{\bideg}{\mathrm {bideg}}
\newcommand{\F}{\mathcal{F}}
\newcommand{\g}{\mathfrak{g}}
\newcommand{\be}{\bar\eta}
\newcommand{\fe}{\varphi}  
\newcommand{\sudda}[1]{}
\begin{document} 
\date{\today} 
\title{A Gorenstein numerical semi-group ring having a transcendental series of Betti numbers }   
\author {Clas L\"ofwall, Samuel Lundqvist, and Jan-Erik Roos}
\maketitle

\section{Introduction}

Around 1969 Oscar Zariski asked Ernst Kunz whether there were any
relations between Gorenstein rings and symmetric numerical semigroup
rings (cf. the introduction in \cite{Kunz}, where this question was settled by Kunz ; more details
below).
Presumably Zariski was inspired both by the thesis of J\"urgen Herzog \cite{Her} and by his own  results that to {\it plane
curve singularitites} there is always associated a numerical
semigroup ring which was a {\it complete intersection} (i.e. a
special case of a Gorenstein ring) \cite{Zar}. According to Tate \cite{Ta} a complete
intersection always has a rational Poincar\'e-Betti series.
Here we will prove that there are Gorenstein numerical semigroup
rings whose Poincar\'e-Betti series is transcendental.
More precisely, here is the smallest example we have been able to find:
let $k$ be a field of characteristic zero and $R$ the subring of the polynomial ring $k[t]$,
generated by the twelve monomials
$$
t^{36},t^{48},t^{50},t^{52},t^{56},t^{60},t^{66},t^{67},t^{107},t^{121},t^{129},t^{135}.
$$ 
Then $R$ is a Gorenstein ring and the generating series (the Poincar\'e-Betti series of $R$)
$$
P_R(z)=\sum_{i\geq 0} |{\rm Tor}_i^R(k,k)|z^i
$$
is rationally related to the infinite product
$$
\prod_{n=1}^{\infty}{(1+z^{2n-1})^2\over(1-z^{2n})^2}
$$
and thus transcendental. If we divide $R$ by the non-zero divisor in the
maximal ideal 
corresponding to e.g. $t^{36}$ we obtain an {\it artinian} Gorenstein ring of
embedding dimension 11 having an irrational series of Betti numbers and
this might be the smallest possible example, cf. B\o gvad \cite{Bo} and \cite[pages 459--461]{Roos1}. Cf. also  \cite{Roos2} for the
skew-commutative case.
For even more precise results and details, cf. Theorem \ref{thm1} below.
But even more interesting are the methods of proofs related to Golod homomorphisms,
so-called large maps, graded Lie algebras etc.
and thereby to results by Gulliksen, Levin, Avramov, L\"ofwall, B{\o}gvad and others. In particular we wish to draw the attention to useful decomposition
results for finitely presented graded Lie algebras in section \ref{sec:proofs} (theorems \ref{modul}, \ref{one} and \ref{iso}).

\section{Symmetric numerical semigroups and Gorenstein rings}

A numerical semigroup $S$ is an additive semigroup of natural numbers generated by a set of
numbers $0<g_1<g_2< \ldots <g_n$ such that $\gcd(g_1,g_2,\ldots,g_n)=1$.
We write $S=(g_1,\ldots,g_n)$ 
 Let $k$ be a field of characteristic zero.
The numerical semigroup ring $k[S]$ of $S$ over $k$ is by definition the subring of the polynomial
ring $k[t]$ generated by the monomials 
$$
t^{g_1},t^{g_2},\ldots,t^{g_n}
$$
The Poincar\'e-Betti series of $k[S]$ is by definition the generating series
$$
P_{k[S]}(z) = \sum_{i\geq 0} |{\rm Tor}^{k[S]}_{i}(k,k)| z^i
$$	
(for a $k$-vector space $V$ we denote by $|V|$ the dimension of $V$ over $k$).
In \cite{Fro-Ro} Ralf Fr\"oberg and the third author found by modifying an earlier result by Roos-Sturmfels \cite{Ro-St} that
the following semigroup
$$
S=(18,24,25,26,28,30,33)
$$
and the corresponding $k[S]$ had a series $P_{k[S]}(z)$ that was an explicit transcendental function.
In the present paper we will prove that there are semigroups $S$ such that $G=k[S]$  is a
{\it Gorenstein} ring and such that $P_G(z)$ is a transcendental function.

\bigskip

Let us recall that ${\bf N}\setminus S$ is finite, and let $F(S)$ the largest integer that is not in
$S$ (it is called the Frobenius number of $S$).
Now $S$ is called symmetric if for every $n \in {\bf Z}$ either $n\in S$ or $F(S)-n \in S$.
The following result was proved in the paper \cite{Kunz} (mentioned in the introduction): $k[S]$ is a Gorenstein ring if and only if $S$ is symmetric. 
Now in \cite{Ro-G-San} there are described infinitely many different ways to associate a symmetric semigroup to a given semigroup and
we will see that from {\it a homological point of view} these different ways give essentially the same result.
Let us start with any numerical semigroup $S$ and let us recall the recipe from \cite{Ro-G-San}.
Let $F(S)$ be the Frobenius number of $S$ just defined 
($F(S)+1$ is also called the conductor of $S$). Let us also denote by $PF(S)$ the set of pseudo-Frobenius
numbers of $S$, i.e. those integers $z \notin S$ such that $z+ S\setminus \{0\}\subseteq S$. 
 The cardinality of $PF(S)$ is also called the type of $S$.
Now suppose that $S=(g_1,\ldots,g_n)$ and that $PF(S)=(n_1,\ldots,n_t)$ and let $\bar g$ be an {\it odd} integer
such that $\bar g \geq 3F(S)+1$. Then
$$
\bar S_{\bar g} = (2g_1,2g_2,\ldots,2g_n,\bar g -2n_1,\bar g -2n_2,\dots,\bar g -2n_t)
$$
is a symmetric numerical semigroup such that $S = \{n | 2n \in {\bar S}_{\bar g}\}$.
Note that this gives infinitely many $\bar S_{\bar g}$ but they are all {\it essentially}
the same from a homological point of view. We will only illustrate this with the special case
$S = (18,24,25,26,28,30,33)$ taken from \cite{Fro-Ro} and \cite{Ro-St}. Now in this case $F(S)=65$ and $PF(S)=(65,45,38,34,31)$.
Thus the odd integers $\geq 3F(S)+1$ are the integers $197,199,201,\ldots$ and
$$ {\bar S}_{197} =(36,48,50,52,56,60,66,67,107,121,129,135)$$
and
$$ {\bar S}_{199} =(36,48,50,52,56,60,66,69,109,123,131,137)$$
etc.
We start by analyzing the smallest case ${\bar S}_{197}$.
Let us denote the corresponding numerical semigroup ring by
$R_{197}$ which is a Gorenstein ring of dimension 1 and a domain, since it is a subring of $k[t]$.

We therefore have to determine the series
\begin{equation} \label{eq2}
P_{R_{197}}(x,y) =\sum_{i,j}|{\rm Tor}^{R_{197}}_{i,j}(k,k)|x^iy^j 
\end{equation}
where $j$ refers to the grading of $R_{197}$.
For this analysis we use Macaulay2 \cite{M2} (working over ${\bf Q}$). 
Recall that the ring $R_{197}$ can be obtained by
introducing a ring $R={\bf Q}[t]$, another ring $T={\bf Q}[a,b,c,d,e,f,g,h,i,j,k,l]$ and a map $\phi$ between them 
 which is defined by $a\rightarrow t^{36}, b\rightarrow t^{48}$ etc.
 The kernel of $\phi$ is an ideal $J$ in $T$ and
$$
{\bf Q}[t^{36},t^{48},t^{50},t^{52},t^{56},t^{60},t^{66},t^{67},t^{107},t^{121},t^{129},t^{135}] \cong  T/J \cong R_{197}
$$
It is important for us to keep track of the gradings and therefore the Macaulay2 code is:
\bigskip
{\tt

R:=QQ[t]

T:=QQ[a..l,Degrees=> $\{\{36\},\{48\},\{50\},\{52\},\{56\},\{60\},\{66\},\{67\},\{107\},\{121\},\{129\},\{135\}\}$]

phi=map(R,T,$\{ t\hat{\hbox{\ }}36,t\hat{\hbox{\ }}48,t\hat{\hbox{\ }}50,
t\hat{\hbox{\ }}52,t\hat{\hbox{\ }}56,t\hat{\hbox{\ }}60,
t\hat{\hbox{\ }}66,t\hat{\hbox{\ }}67,
t\hat{\hbox{\ }}107,
t\hat{\hbox{\ }}121,
t\hat{\hbox{\ }}129,
t\hat{\hbox{\ }}135\}$)

ker phi

J=trim(oo)
}

\bigskip
The ideal $J$ obtained is minimally generated by the following elements:
$$
b^2-af,c^2-bd,cd-ag,d^2-be,de-bf,a^3-bf,e^2-df,ef-cg,a^2b-f^2,a^2c-eg,a^2f-g^2,
abc-h^2,adh-bi,
$$
$$
ci-aj,aeh-di,afh-ei,bch-ak,bdh-fi,agh-bj,cj-al,beh-al,gi-dj,
ceh - dj, bfg - hi,ej - bk,
$$
$$
cfh -bk,a^2i-ck,dfh-ck,fj-dk,bgh-dk,cgh-bl,ek-cl,
dgh-cl,gj-dl,f^2h-dl,eg^2-hj,egh-fk,abi-el,
$$
$$
fgh-a^2j,gk-fl,adi-fl,abdf-hk,a^2k-gl,
abdg-hl, adfg-i^2,afg^2-ij,bhj-ik,j^2-il,bfh^2-il,
$$
\begin{equation} \label{eq3}
jk-bhl,fhk-jl,k^2-ehl,aij-kl,ei^2-l^2
\end{equation}
As we said above the ring $R_{197}=T/J$ is a Gorenstein ring of Krull dimension 1 and it is a subring
of ${\bf Q}[t]$, thus a domain. We now divide out by the non-zero divisor $a$
of degree 36. Now $(T/J)/(a)$ is an artinian Gorenstein ring which also can be quickly
determined by Macaulay2 as follows:
We start by simplifying the ideal $J$ by putting $a=0$
\medskip

{\tt substitute(J,$\{$a=>0$\}$)

I=trim(oo)
}
\medskip

We get a new graded ideal $I$ which is easier, since it is minimally generated by:
$$
b^2,c^2-bd,cd,d^2-be,bf,de,e^2-df,ef-cg,f^2,eg,g^2,h^2,bi,ci,di,ei,bch,bdh-fi,bj,cj,beh,gi-dj,
$$
$$
ceh-dj,hi,ej-bk,cfh-bk,ck,dfh,fj-dk,bgh-dk,cgh-bl,ek-cl,dgh-cl,dl,gj,hj,fk,el,
$$
$$
fgh,fl,gk,hk,gl,hl,i^2,ij,ik,il,j^2,jk,jl,k^2,kl,l^2
$$
and it can be considered in the ring ${\bf Q}[b,c,d,e,f,g,h,i,j,k,l]$, so that
$R_{197}/(a) \cong  {\bf Q}[b,c,d,e,f,g,h,i,j,k,l]/I$
and furthermore
\begin{equation} \label{eq4}
P_{R_{197}/(a)}(z,1) = P_{R_{197}}(z,1)/(1+z)
\end{equation}
since $a$ is a nonzero divisor.
 Note that in (\ref{eq4}) we have taken the total degree, but  everything is still
also graded as follows:
$$
b,c,d,e,f,g,h,i,j,k,l\, {\rm\, have\, the\, degrees\,} 48,50,52,56,60,66,67,107,121,129,135
$$
but it is
unwieldly to work with such high degrees. We therefore check the possible gradings of the last ideal
$I$. We therefore temporarily denote the possible degrees of the variables $b,c,d,e,f,g,h,i,j,k,l$
by the same letters, and in order to find all those integral degrees of the variables for which the relations in $I$ are still
homogeneous, we have to solve the following linear equations for integer solutions corresponding the non-monomial (i.e. the
binomial) relations in $I$, where the relation $c^2-bd$ gives the linear equation $2c-b-d=0$ etc.:

\begin{equation*}
\begin{aligned}[c]
2c-b-d& = 0\cr
2d-b-e& = 0\cr
2e-d-f& = 0\cr
e+f-c-g& = 0\cr
b+d+h-f-i& = 0\cr
\end{aligned}
\begin{aligned}[c]
g+i-d-j& = 0\cr
c+e+h-d-j& = 0 \cr
e+j-b-k& = 0\cr
c+f+h-b-k& = 0\cr
f+j-d-k& = 0\cr
\end{aligned}
\begin{aligned}[c]
b+g+h-d-k& = 0\cr
c+g+h-b-l& = 0\cr
e+k-c-l& = 0\cr
d+g+h-c-l& = 0\cr
\end{aligned}
\end{equation*}
We have 14 equations for the 11 unknowns $b,c,d,e,f,g,h,i,j,k,l$ and the result is:
(we have three constants $c_1,c_2,c_3$):

\begin{equation} \label{eq5}
\begin{aligned}[c]
b& = c_1\cr
c& = (c_1+c_2)/2\cr
d& = c_2\cr
e& = 2c_2-c_1\cr
f& = 3c_2-2c_1\cr
g& = (9c_2-7c_1)/2\cr
\end{aligned}
\begin{aligned}[c]
h& = c_3\cr
i& = c_3-2c_2+3c_1\cr
j& = (2c_3+3c_2-c_1)/2\cr
k& = (2c_3+7c_2-5c_1)/2\cr
l& = c_3+5c_2-4c_1\cr
\end{aligned}
\end{equation}

Thus $c_1=1$ and $c_2=1$ give the solutions
$b=c=d=e=f=g=1$ and $h=c_3,\quad i=j=k=l=c_3+1$
so that the minimal choice for a positive integral grading is $c_3=1$ which gives the final minimal integral grading
$b=c=d=e=f=g=h=1, \quad i=j=k=l=2$.
Now we transform our previous ideal $I$ to a ring with this last grading:
\bigskip

{\tt
JE:=QQ[b..l,Degrees => $\{\{1\},\{1\},\{1\},\{1\},\{1\},\{1\},\{1\},\{2\},\{2\},\{2\},\{2\}\}$]

I2 = substitute(I,JE)
}

\medskip

\noindent Thus we now have a ring ${\overline R}_{197} \cong JE/I2$ which we will study in detail.

\noindent The maximal ideal $(b,c,d,e,f,g,h,i,j,k,l)$ is generated by $b,c,d,e,f,g,h$ of degree $1$
and $i,j,k,l$ of degree 2. The square of the maximal ideal is generated by
$gh, fh, eh, dh, ch, bh, fg, dg, cg, bg, df, cf, ce,be, bd, bc$ and
the cube of the maximal ideal is generated by $cl, bl, dk, bk, dj, fi, bdg$
and finally the fourth power of the maximal is generated by $bcl$ which is also the socle
of ${\overline R}_{197}$. This and the relations $I2$ show that ${\overline R}_{197} \cong JE/I2$ is the trivial
 extension of 
 the ring 
\begin{equation} \label{eq6}
{\cal S}={\bf Q}[b,c,d,e,f,g]/(b^2,c^2-bd,cd,d^2-be,de,bf,e^2-df,ef-cg,eg,f^2,g^2)
\end{equation}
with an ${\cal S}$-module $M$ that is generated by $h,i,j,k,l$ in $\bar R_{197}\cong JE/I2$.
Thus ${\overline R}_{197} = {\cal S}\propto M$ and
 $M$ can be defined as the cokernel of the map
 $$
{\cal S}^{28} \longrightarrow {\cal S}^5 
$$
defined by the $5 \times 28$ matrix over ${\cal S}$:
 (the generators $h,i,j,k,l$ correspond
to the 5 rows of this matrix):

\bigskip

\resizebox{\linewidth}{!}{
$\begin{pmatrix} 
0&0&0&0&0&0&0&0&0&0&0&0&0&0&0&0&0&0&fg&dg&cg&bg&df&cf&ce&be&bd&bc \\
0&0&0&0&0&0&0&0&0&0&0&0&0&g&e&d&c&b&0&0&0&0&0&0&0&0&-f&0 \\
 0&0&0&0&0&0&0&0&g&f&e&c&b&-d&0&0&0&0&0&0&0&0&0&0&-d&0&0&0& \\
0&0&0&0&g&f&e&c&0&-d&-b&0&0&0&0&0&0&0&0&0&0&-d&0&-b&0&0&0&0 \\
g&f&e&d&0&0&-c&0&0&0&0&0&0&0&0&0&0&0&0&-c&-b&0&0&0&0&0&0&0
\end{pmatrix}$
}

\bigskip

Example: left matrix multiplication of the row matrix $h,i,j,k,l$ in $JE$ with the seventh column of the matrix above
gives the relation $ke-cl$ in $JE$.

Next we use the result proved by Avramov and Levin \cite{Av-Le} that since ${\overline R}_{197}$ is Gorenstein
the natural map ${\overline R}_{197} \longrightarrow {\overline R}_{197}/(bcl) $ is a Golod map. Let us put ${\overline R}={\overline R}_{197}/(bcl)$
With this notation the Golod condition implies that
$$
P_{\overline R}(z)= {P_{{\overline R}_{197}} \over 1-z^2 P_{{\overline R}_{197}}(z)}
$$
which we will write in the form
\begin{equation} \label{eq7}
{{1\over P_{{\overline R}_{197}}(z)}= {1\over P_{\overline R}(z)}+z^2}
\end{equation}
Now $bcl$ lies in $M$ and therefore ${\overline R}$ is 
 a new trivial extension ${\cal S} \propto {\overline M}$
where ${\overline M} = M/(bcl)$. 
Now, according to a result
 of Gulliksen \cite{Gu}
\begin{equation} \label{eq8}
P_{\overline R}(z) = {P_{\cal S}(z) \over 1-zP_{\cal S}^{\overline M}(z)}
\end{equation}
In order to determine  ${\rm Ext}^*_{\cal S}({\overline M},k)$
we first observe that  among the generators of ${\overline M}$, $h$ plays
 a special role since it has degree $1$, while the other generators $i,j,k,l$ have degree $2$. Let $N$ be the submodule of ${\overline M}$ generated by $h$.
We have an exact sequence of ${\cal S}$-modules:
\begin{equation} \label{eq9}
  0 \longrightarrow N \longrightarrow {\overline M}  \longrightarrow {\overline M}/N \longrightarrow 0 
\end{equation}
which gives a long exact sequence of ${\rm Ext}_{\cal S}^*(k,k)$-modules:
$$ \ldots  \rightarrow {\rm Ext}_{\cal S}^{n}({\overline M}/N,k) \rightarrow  {\rm Ext}_{\cal S}^{n}({\overline M},k)\rightarrow
  {\rm Ext}_{\cal S}^{n}(N,k) \rightarrow
{\rm Ext}_{\cal S}^{n+1}({\overline M}/N,k) \rightarrow {\rm Ext}_{\cal S}^{n+1}({\overline M},k)\rightarrow \ldots
$$
We now claim that 
\begin{equation} \label{eq10}
{\rm Ext}_{\cal S}^*({\overline M},k)\longrightarrow {\rm Ext}_{\cal S}^*(N,k)
\end{equation}
is an epimorphism. But (\ref{eq10}) is a map of (left) ${\rm Ext}_{\cal S}^*(k,k)$-modules.
The important thing now is that (\ref{eq10}) is an epimorphism in degrees $*=0$ and $*=1$
(direct calculation). Therefore if we prove that ${\rm Ext}_{\cal S}^*(N,k)$ is generated as
a (left) ${\rm Ext}_{\cal S}^*(k,k)$-module by its elements in degree 0 and 1 it will follow
that (\ref{eq10}) is an epimorphism in all degrees.
But the annihilator of $(h)$ in ${\overline R} = {\cal S}\propto {\overline M}$ is
$(fg,df,be,bc,bdg,h,i,j,k,l)$. Thus we have an exact sequence of ${\cal S}$-modules:
\begin{equation} \label{eq11}
0 \longrightarrow (fg,df,be,bc,bdg) \longrightarrow {\cal S} \longrightarrow N \longrightarrow 0
\end{equation}
But the five generating elements in the ideal to the left in (\ref{eq11}) are all annihilated by the maximal ideal in ${\cal S}$.
Thus if we take ${\rm Ext}_{\cal S}^*(.,k)$ of the sequence (\ref{eq11}) it follows that ${\rm Ext}_{\cal S}^*(N,k)$
is indeed generated as an ${\rm Ext}_{\cal S}^*(k,k)$-module by its elements of degree $0$ and $1$
 Thus we have indeed a short exact sequence 
\begin{equation} \label{eq12} 
0\longrightarrow {\rm Ext}_{\cal S}^{*}({\overline M}/N,k)\longrightarrow {\rm Ext}_{\cal S}^{*}({\overline M},k)
\longrightarrow {\rm Ext}_{\cal S}^{*}(N,k)\longrightarrow 0 
\end{equation}
so that in particular 
\begin{equation} \label{eq13}
 P_{\cal S}^{\overline M}(z) = P_{\cal S}^{{\overline M}/N}(z)+P_{\cal S}^N(z)
\end{equation}
Now the exact sequence (\ref{eq11}) shows that
$P_{\cal S}^N(z)=1+5zP_{\cal S}(z)$ and ${\overline M}/N$ is generated by
the $4$ elements $i,j,k,l$ which are annihilated by the maximal ideal of
${\cal S}$. Therefore $P_{\cal S}^{{\overline M}/N}(z)=4P_{\cal S}(z)$ so that
 $P_{\cal S}^{\overline M}(z) = 1+(4+5z)P_{\cal S}(z)$
and finally using (\ref{eq8})
\begin{equation} \label{eq14}
P_{\overline R}(z) ={P_{\cal S}(z)\over 1-z(1+(4+5z)P_{\cal S}(z))} 
\end{equation}
Now, if we combine (\ref{eq14}), rewritten in the form
$$
{1\over P_{\overline R}(z)} = {1-z\over P_{\cal S}(z)}-4z-5z^2
$$  
with (\ref{eq7}) we obtain at last:
$$
{1\over P_{{\overline R}_{197}}(z)}={1-z\over P_{\cal S}(z)}-4z-4z^2
$$
whose more precise graded form is deduced as follows:

1) Replace the formula (\ref{eq7}) by
\begin{equation} \label{eq15}
{{1\over P_{{\overline R}_{197}}(x,y)}= {1\over P_{\overline R}(x,y)}+x^2y^4}
\end{equation}

2) Replace the formula (\ref{eq8}) by
\begin{equation} \label{eq16}
P_{\overline R}(x,y) = {P_{\cal S}(x,y) \over 1-xyP_{\cal S}^{\overline M}(x,y)}
\end{equation}
3) Replace the formula (\ref{eq13}) by
\begin{equation} \label{eq17}
 P_{\cal S}^{\overline M}(x,y) = P_{\cal S}^{{\overline M}/N}(x,y)+P_{\cal S}^N(x,y)
\end{equation}
where $P_{\cal S}^{{\overline M}/N}(x,y)=4y P_{\cal S}(x,y)$ and
$P_{\cal S}^N(x,y)=(4xy^2+xy^3)P_{\cal S}(x,y)$
so that 
$$
 P_{\cal S}^{\overline M}(x,y) =4y P_{\cal S}(x,y)+4(xy^2+xy^3)P_{\cal S}(x,y)
$$
and therefore finally
$$
{1\over P_{\overline R}(x,y)}={1-xy \over P_{\cal S}(x,y)}-4xy^2-4x^2y^3-x^2y^4. 
$$

\begin{thm} \label{thm1}
Let $R_{197}$ be the numerical semigroup ring generated by
$$
t^{36},t^{48},t^{50},t^{52},t^{56},t^{60},t^{66},t^{67},t^{107},t^{121},t^{129},t^{135}
$$
as a subring of $k[t]$ ($k$ a field of characteristic 0) and let ${\overline R}_{197}$ be $R_{197}$ divided by the
non-zero divisor $t^{36}$. Both these rings are Gorenstein rings and the bigraded 
Poincar\'e-Betti series for ${\overline R}_{197}$ where the first $7$ of the remaining 11 generators are
given the degree $1$ and the last $4$ generators are given the degree $2$ is given by the formula
$$
{1\over P_{{\overline R}_{197}}(x,y)}={1-xy\over P_{\cal S}(x,y)}-4xy^2-4x^2y^3
$$
where $\cal S$ is the ring given in (\ref{eq6}) above and
$$
1/P_{{\cal S}}(x,y) = (1+1/x)/{\cal S}^!(xy)-{\cal S}(-xy)/x
$$
where ${\cal S}(t)=1+6t+10t^2+t^3$ is the Hilbert series of ${\cal S}$
and
$$
{\cal S}^!(t) = {1\over (1+t)(1-2t)^2(1-3t+t^2)}\prod_{n=2}^{\infty}{(1+t^{2n-1})^2\over(1-t^{2n})^2}
$$ 
is the Hilbert series of the Koszul dual of ${\cal S}$. 
\end{thm}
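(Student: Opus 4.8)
The closed form for $1/P_{\overline R_{197}}(x,y)$ is pure bookkeeping. I would simply assemble the formulas already derived in the text: substitute (\ref{eq17}), together with the values $P_{\cal S}^{{\overline M}/N}(x,y)=4y\,P_{\cal S}(x,y)$ and $P_{\cal S}^{N}(x,y)=(4xy^{2}+xy^{3})\,P_{\cal S}(x,y)$ coming from the exact sequence (\ref{eq9}), the presentation (\ref{eq11}) and the short exact sequence (\ref{eq12}), into (\ref{eq16}), and feed the outcome into (\ref{eq15}); the terms $\pm x^{2}y^{4}$ cancel and one is left with $1/P_{\overline R_{197}}(x,y)=(1-xy)/P_{\cal S}(x,y)-4xy^{2}-4x^{2}y^{3}$. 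The only point needing care is that every map in (\ref{eq9})--(\ref{eq12}) and the factorisations (\ref{eq15}) and (\ref{eq16}) is homogeneous for the internal grading with precisely the bidegree shifts written there; this is forced by $\deg h=1$ and $\deg i=\deg j=\deg k=\deg l=2$ in $\overline R_{197}$, together with the fact that $bcl$ represents a Tor-class in homological degree $2$ and internal degree $4$.

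The remaining two formulas are both governed by the homotopy Lie algebra of $\cal S$. Since $\cal S$ is a commutative graded artinian \emph{quadratic} algebra, $\mathrm{Ext}_{\cal S}(k,k)=U(\pi)$ is the enveloping algebra of its homotopy Lie algebra $\pi=\pi^{*}(\cal S)$, whose deviations $\varepsilon_{i}=\dim_{k}\pi^{i}$ drive the product expansion of $P_{\cal S}(x,y)$ ($\varepsilon_{1}=6$ in internal degree $1$, $\varepsilon_{2}=11$ in internal degree $2$, corresponding to the eleven quadratic relations of (\ref{eq6})), and the sub-Hopf-algebra of $\mathrm{Ext}_{\cal S}(k,k)$ generated by $\mathrm{Ext}^{1}_{\cal S}$ is the Koszul dual $\mathcal{S}^{!}=U(\mathfrak n)$, where $\mathfrak n\subseteq\pi$ is the sub-Lie-algebra generated by $\pi^{1}$. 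A first concrete step is to write $\mathcal{S}^{!}=T(V^{*})/(R^{\perp})$ explicitly: the relation space of $\cal S$ as an associative algebra is the $26$-dimensional span of the eleven relations in (\ref{eq6}) together with the fifteen commutators, so $R^{\perp}$ is $10$-dimensional, and one reads off the ten defining quadratic relations of $\mathcal{S}^{!}$ as well as $\dim\mathfrak n_{1}=6$, $\dim\mathfrak n_{2}=11$.

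The plan is then to apply the decomposition theorems of Section~\ref{sec:proofs}. I would show that $\pi$ and $\mathfrak n$ are finitely presented graded Lie algebras of a shape to which Theorems~\ref{modul}, \ref{one}, \ref{iso} apply, so that each splits into a finite ``bottom'' piece --- responsible for the rational prefactor $1/\bigl((1+t)(1-2t)^{2}(1-3t+t^{2})\bigr)=1/(1-6t+10t^{2}+t^{3}-12t^{4}+4t^{5})$ in $\mathcal{S}^{!}(t)$, which is pinned down by the Macaulay2 computation of the low-degree graded Betti numbers of $\cal S$ --- together with a controlled ``tail'' that is essentially abelian of rank $2$ in every degree; by the Poincar\'e--Birkhoff--Witt theorem that tail contributes exactly $\prod_{n\ge2}(1+t^{2n-1})^{2}/(1-t^{2n})^{2}$, an exterior factor in each odd degree and a symmetric factor in each even degree. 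The structural fact making the bottom piece finite is that $\mathcal{S}(t)=1+6t+10t^{2}+t^{3}$ is a polynomial, i.e. ${\cal S}_{\ge4}=0$, which bounds the Koszul homology of $\cal S$. Finally I would translate these decompositions into generating functions, using $x$ for the homological and $y$ for the internal grading, noting that on $\mathcal{S}^{!}$ the two gradings agree so $U(\mathfrak n)$ contributes $\mathcal{S}^{!}(xy)$, and identifying $\mathcal{S}(-xy)$ as the Euler characteristic of the finite Koszul complex of $\cal S$; this produces $1/P_{\cal S}(x,y)=(1+1/x)/\mathcal{S}^{!}(xy)-\mathcal{S}(-xy)/x$, with the factor $1+1/x$ recording the homological shift by one, and substituting the product formula for $\mathcal{S}^{!}(t)$ completes Theorem~\ref{thm1}.

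The main obstacle is precisely the structural dichotomy invoked in the last paragraph: proving that $\pi^{*}(\cal S)$ and $\mathfrak n$ are finitely presented and split, up to a finite bottom piece, as an extension of that piece by an essentially abelian rank-$2$-in-each-degree Lie algebra, \emph{and} that the splitting is strong enough for the Hilbert series to be literally the product of the rational factor by the transcendental infinite product. This is the situation the decomposition theorems of Section~\ref{sec:proofs} are engineered for, so the real labour is to verify their hypotheses in this concrete case --- exhibiting the finite presentations, matching the low-degree Betti data against the Macaulay2 output, and confirming that the tail carries no further relations. Once that is in place, everything else in Theorem~\ref{thm1} is a formal manipulation of generating functions.
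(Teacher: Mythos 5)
Your first paragraph, the bookkeeping that converts the Golod, Gulliksen, and exact-sequence formulas \eqref{eq15}--\eqref{eq17} into the displayed closed form for $1/P_{\overline R_{197}}(x,y)$, is exactly what the paper does and is correct.

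Your second and third paragraphs, however, contain two genuine gaps that change the shape of the proof.

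\textbf{The route to the formula for $1/P_{\cal S}$.} You propose to obtain
$1/P_{\cal S}(x,y)=(1+1/x)/{\cal S}^!(xy)-{\cal S}(-xy)/x$ by ``identifying ${\cal S}(-xy)$ as the Euler characteristic of the finite Koszul complex,'' but ${\cal S}$ is not a Koszul algebra, and the Koszul complex of ${\cal S}$ is not a resolution of $k$, so that heuristic does not yield the formula. The paper's actual argument goes through two non-formal ingredients you do not mention: (i) $m^3=(bdg)$ and $b^2=0$ imply, by Levin's theorem [Le2, Thm.~2.12], that ${\cal S}\to T:={\cal S}/m^3$ is a Golod map, giving $1/P_{\cal S}=1/P_T+z^2$; and (ii) since $T$ has $n^3=0$, L\"ofwall's theorem \eqref{eq24} applies to $T$, giving $1/P_T=(1+1/z)/T^!(z)-T(-z)/z$. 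Only after combining these, and noting $T^!={\cal S}^!$ because $T$ and ${\cal S}$ have the same quadratic part, does one get \eqref{eq22}. Your version skips the reduction to $T$ entirely, and the reasoning you substitute for it does not close the gap.

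\textbf{The Lie-algebra decomposition.} You describe $\eta$ (your $\mathfrak n$) as splitting ``up to a finite bottom piece'' into that piece plus an essentially abelian rank-$2$ tail, with the finite piece pinned down by low-degree Macaulay2 Betti numbers. This is a misreading of what makes the rational prefactor rational. The paper's decomposition is the extension $0\to\rad(\eta)\to\eta\to\bar\eta\to0$, where $\rad(\eta)$ is abelian of dimension $2$ in every degree $\ge3$ (this gives the infinite product), and $\bar\eta$ is \emph{not} finite-dimensional: Corollary~\ref{cor} shows that $\bar\eta_{\ge3}$ is a direct sum of three \emph{free} (or near-free) Lie algebras $\F(C,D',G)/\langle C^2,[C,[D',G]]\rangle$, $\F(D,E)$, $\F(B,F)$, each of exponential growth, and the rational factor $1/\bigl((1+t)(1-2t)^2(1-3t+t^2)\bigr)$ is the product of the Hilbert series of their enveloping algebras. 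Establishing this requires the whole machinery of Section~\ref{sec:proofs}: the ideals $J11,J12,J2$ annihilating one another (Propositions~\ref{rad}--\ref{sum}), the $\bar\eta$-module $M$ and Theorems~\ref{modul}/\ref{one}/\ref{iso} to promote a surjection into an isomorphism, and then a separate cocycle computation (Proposition~\ref{fec}) to pin down the extension by the radical and prove the radical really is $2$-dimensional in every degree rather than collapsing. A computation of low-degree Betti numbers cannot substitute for any of this; it only provides the base case for the inductive arguments.

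In short: the bookkeeping step is right, the identification of ${\cal S}^!$ as a quadratic dual and the $6/11/10$ count are right, but the two load-bearing ideas --- Levin's Golod map to ${\cal S}/m^3$ followed by L\"ofwall's $n^3=0$ theorem, and the decomposition of $\eta$ into an \emph{infinite} quotient (a direct sum of free Lie algebras) by a $2$-per-degree abelian radical --- are either missing or misdescribed.
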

\begin{proof}
Everything follows from the computations above, except the formula for $S^!(t)$ which will be analyzed and proved in sections \ref{sec:irr} and \ref{sec:proofs}.
\end{proof}


\medskip

\medskip
Remark 1.-- If we put $y=1$ in all the formulae in Theorem \ref{thm1}, we get the ordinary Poincar\'e-Betti
series of Betti numbers.
\medskip

Remark 2.-- It follows from what we have proved above that the
 natural map ${\overline R} \longrightarrow {\overline R}/(h)$
is {\it large} in the sense of Levin \cite{Le1}, i.e. the natural map
\begin{equation}  \label{eq18}
 {\rm Ext}^*_{\overline R/(h)}(k,k) \longrightarrow {\rm Ext}^*_{\overline R}(k,k)  
\end{equation}
is a monomorphism.
\medskip
(In his lecture notes about Golod homomorphisms \cite{Le2}, Levin calls such maps
``co-Golod'' maps.)
\medskip
Indeed, the map ${\overline R} \longrightarrow {\overline R}/(h)$
is the same as the natural map (here $N = (h)$ in $\overline M$)
\begin{equation} \label{eq19}
{\cal S}\propto {\overline M} \longrightarrow {\cal S}\propto {\overline M}/N 
\end{equation}
Furthermore it is known that the ext-algebra of any trivial extension
$C\propto L$ (where $L$ is a $C$-module sits in the middle of a Hopf
algebra extension:
$$
k \longrightarrow T(s^{-1}{\rm Ext}^*_C(L,k)) \longrightarrow {\rm Ext}^*_{C\propto L}(k,k) \longrightarrow {\rm Ext}^*_C(k,k) \longrightarrow k   
$$
where $T$ is the graded tensor algebra and $s^{-1}$ means that we push the degrees upwards one step.
Thus if we take the extalgebras in (\ref{eq19}) we obtain that (\ref{eq18}) is a monomorphism if we can prove that
\begin{equation}  \label{eq20} 
{\rm Ext}^*_{\cal S}({\overline M}/N,k) \longrightarrow {\rm Ext}^*_{\cal S}({\overline M},k)
\end{equation}
is a monomorphism. But this is a consequence of (\ref{eq12}) above.

\section{Irrationality of the Poincar\'e-Betti series of $\cal S$.} \label{sec:irr}
Let  $\cal S$ be the ring
\begin{equation} \label{eq21}
{\cal S} = {{\bf Q}[b,c,d,e,f,g] \over (b^2,c^2-bd,cd,d^2-be,e^2-df,de,bf,ef-cg,eg,f^2,g^2)} 
\end{equation}
We have already published in \cite{Fro-Ro} and \cite{Ro-St} the result that
\begin{equation} \label{eq22}
1/P_{\cal S}(z) =(1+1/z)/{\cal S}^!(z)-(1-6z+10z^2-z^3)/z 
\end{equation}
where
$$
{\cal S}^!(z)= {1\over (1+z)(1-2z)^2(1-3z+z^2)}\prod_{n=2}^{\infty}{(1+z^{2n-1})^2\over(1-z^{2n})^2}
$$
The proofs there are however incomplete.
In this section, we will briefly indicate a general way of obtaining
this result as a part of a general theory and this is completed by
some general theorems in the next section. When it comes to analyzing the
Hilbert series of (24) below there are several technical details
only alluded at in this section and they will be completely treated
in section 4 below. But the sketchy treatment of the Hilbert series of
(24) given here can be considered as
a motivation and background for the hard work in section 4.

First we observe that the Hilbert series of ${\cal S}$ is
$1+6t+10t^2+t^3.$
Furthermore the third power of the maximal ideal $m=(b,c,d,e,f,g)$ of ${\cal S}$ is generated by $bdg$.
Thus $m^3=(b)m^2$ and since $b^2=0$ we can use another theorem of Levin, namely 
\cite[ Theorem 2.12, page 33]{Le2} for $n=3$ which says that
$$
{\cal S} \longrightarrow {\cal S}/(bdg)
$$
is a Golod map and that 
\begin{equation} \label{eq23}
P_{{\cal S}/m^3}(z) = P_{{\cal S}}(z)/(1-z^2 P_{{\cal S}}(z))  
\end{equation}
Therefore it is sufficient to concentrate our homological efforts on the ring
$(T,n) = {\cal S}/m^3$ for which the cube of the maximal ideal $n$ is 0.
But now we can use the result of Clas L\"ofwall \cite{Lo2} which says
that
\begin{equation} \label{eq24}
1/P_T(z) = (1+1/z)/T^!(z) - T(-z)/z  
\end{equation}
where $T(-z)=1-6z+10z^2$ is the Hilbert series of $T$ at $-z$ and  $T^!(z)$
is the Hilbert series of the Koszul dual $T^!$ of $T$.
It therefore remains to determine this last Hilbert series.
Recall that it follows from (\ref{eq21}) and \cite{Lo2} that
\begin{equation} \label{eq25}
T^! ={k<B,C,D,E,F,G> \over([B,C],C^2+[B,D],D^2+[B,E],[C,E],[C,F],E^2+[D,F],[B,G],[E,F]+[C,G],[D,G],[F,G])}
\end{equation}
where  $k<B,C,D,E,F,G>$ is the free associative algebra in the variables $B,C,D,E,F,G$ of degree 1 
which are dual to
$b,c,d,e,f,g$ and where $[,]$ denotes the graded commutator, so that e.g. $[B,C]=BC+CB$ etc.
Now we observe that $T^!$ is the enveloping algebra of the graded
Lie algebra  $\eta = \eta_T$ which is the quotient
of the free graded Lie algebra on the generators $B,C,D,E,F,G$ of degree $1$ by 
the Lie ideal generated by the relations in (\ref{eq25}) .
Thus if
 $$
\eta = \eta_1 \oplus \eta_2 \oplus \ldots \oplus \eta_n \oplus \ldots
$$
the Poincar\'e-Birkhoff-Witt theorem tells us that
$$
T^!(z) = \prod_{n \geq 1}{(1+z^{2n-1})^{\eta_{2n-1}} \over (1-z^{2n})^{\eta_{2n}}}
$$
where $\eta_i$ in the exponents should be interpreted as ranks. We now turn to the problem of determining the $\eta_i$.
\medskip

For this an essential role is played by the program {\tt liedim} written by Clas L\"ofwall \cite{Lo1}.
Recall that this program (which runs under Mathematica and can be downloaded from 

\noindent {\tt http://www2.math.su.se/\~{}clas/liedim/}) works as follows:
First start Mathematica. Then read in the input file {\tt liedim.m} taken from \cite{Lo1}.
Then read in an input file which in the case (\ref{eq25}) looks like (from now on we write $b,c,d,e,f,g$ instead
of $B,C,D,E,F,G$).
\medskip

{\tt generators=$\{$b,c,d,e,f,g$\}$}

{\tt gensigns=$\{$1,1,1,1,1,1$\}$}

{\tt relations=$\{$lie[b, c], sq[c]+lie[b,d],sq[d]+lie[b,e],lie[c,e],lie[c,f],sq[e]+lie[d,f],

lie[b,g],lie[e,f]+lie[c,g],lie[d,g],lie[f,g]$\}$}
\medskip

Now a command like {\it e.g.}
\medskip

{\tt maxdegree[7]}
\medskip

\noindent
gives after a few seconds the result 
\medskip

{\tt $\{$ 6, 11, 11, 18, 38, 79, 158$\}$}
\medskip

Here the different numbers are the ranks of the $\eta_i$:s for
$i=1,2,3,\ldots,7$. Thus there are $6$ generators and in degree 2 there
are $11$ generating elements which in the program are denoted by
\medskip

{\tt modbas[2,1],modbas[2,2],...,modbas[2,11]}
\medskip

There is a command {\tt def} with shows how the {\tt modbas}-elements  
look in the Lie algebra $\eta$, so that {\it e.g.} {\tt def[modbas[2,5]]=lie[e,d]} etc.
There is the inverse of that command called {\tt fed}, so that {\tt fed[lie[e,d]] = modbas[2,5]}.
Furthermore there is a command {\tt ideal} which gives the ideal in the big Lie algebra generated in a
certain degree by given {\tt modbas} elements. Thus, for example, {\tt ideal[7,$\{$modbas[3,5]$\}$]} gives the
graded vector space part in $\eta_7$ of the ideal generated by {\tt modbas[3,5]}.
 These commands can be combined with ordinary
Mathematica commands so that the combined command:

\medskip
{\tt For[n=1,n<12,n++,Print[Length[ideal[7,$\{$modbas[3,n]$\}$]][n]]]}   
\medskip

\noindent gives the result:
\medskip

{\tt 1[1]

53[2]

20[3]

15[4]

20[5]

15[6]

1[7]

52[8]

72[9]

52[10]

68[11]}
\medskip
\noindent

 which shows that the element {\tt modbas[3,1]}
generates an ideal which in degree 7 is one-dimensional,
that {\tt modbas[3,2]} generates an ideal which in degree 7 is of dimension $53$ etc.  
In particular  {\tt modbas[3,1]} and {\tt modbas[3,7]} in $\eta_3$ generate
a unique very small ideal in $\eta$ (it can be proved to be of dimension 2 in all degrees $\geq 3$)
 which can be proved to be nilpotent (thus solvable). 
This small ideal will be denoted $rad(\eta)$ since it is a kind of radical of $\eta$
and $\eta/rad(\eta)$ should be ``semisimple'' i.e. a product of ``simple'' Lie algebras.
This is indeed true if we restrict ourselves to ``virtual'' assertions, i.e. for results that are
true in a high degree (in this case in degrees $\geq 3$). 
The word ``virtual'' is inspired by Serre's terminology in \cite[section 1.8]{Ser}.
Let us be more precise:
We have {\tt def[modbas[3,1]]=lie[e, lie[b, b]]}
and {\tt def[modbas[3,7]]=lie[f, lie[f, d]]} so we now study the new Lie algebra ${\overline \eta}=\eta/({\tt lie[e, lie[b, b]],lie[f, lie[f, d]]})$

Now for this Lie algebra we have only $9$ elements in degree 3, denoted by {\tt modbas[3,i]} for $i=1,\ldots 9$.
Note that e.g. {\tt modbas[3,2]} in this new Lie algebra ${\overline \eta}$ is {\tt lie[e, lie[e, b]]}
 whereas in the {\it old} Lie algebra $\eta$ {\tt modbas[3,2]} is {\tt lie[e, lie[d, c]]}.
But a little experimentation shows that 
the $9$ new elements of degree $3$ in ${\bar\eta}$ can now be divided into three parts
 $$
 J11={\tt \{modbas[3,3],modbas[3,5]\}}\quad J12 = {\tt \{modbas[3,2],modbas[3,4]\}}
 $$
and
 $$ 
 J2 = {\tt \{modbas[3,9],modbas[3,8],modbas[3,7],modbas[3,6],modbas[3,1]\}}
 $$ 
These parts are ``orthogonal'' to each other in the sense that the command {\tt ann}
in {\tt liedim} gives that 
\medskip

{\tt ann[J2,3,3] = J11 $\cup$ J12} and {\tt ann[J11 $\cup$ J12,3,3] = J2 }
\medskip

and
\medskip

{\tt ann[J11,3,3] = J12  $\cup$ J2} and {\tt ann[J12,3,3]= J11 $\cup$ J2}
\medskip

Here the command {\tt ann} is the annihilator command, written at the third author's request by the first author:
Type {\tt ?ann} in {\tt liedim}
and the answer is:
\medskip

{\tt ann[a,t,s] gives a basis for the elements in degree s which multiply the

   modbas-elements, which are of degree t, in the list a to zero}
\medskip

Note that multiplication in 
${\overline \eta}$ of e.g. the elements
 {\tt modbas[3,3]} and {\tt modbas[3,5]} in J11 with all of
the elements in  J12 is given as follows
\medskip

{\tt mult[modbas[3,3],J12]} and {\tt mult[modbas[3,5],J12]}
\medskip

\noindent and the result is {\tt $\{$0, 0$\}$} in both cases.
One can continue higher up (cf. section 4)
and define ideals J11, J12 and J2 that are annihilating eachother in all
positive degrees. In section 4, Corollary \ref{cor}, the Hilbert
series of the ideals are calculated.  The result is that the Hilbert
series of the enveloping algebra of   $\bar\eta$ is 
\begin{equation} \label{eq26}
1 \over {(1+t)(1-2t)^2(1-3t+t^2)} 
\end{equation}
It remains to study the extension
\begin{equation} \label{eq27}
0 \longrightarrow rad(\eta)  \longrightarrow \eta \longrightarrow {\overline \eta} \longrightarrow 0 
\end{equation}
Recall that $rad(\eta)$ is generated by {\tt modbas[3,1]=lie[e, lie[b, b]]} and {\tt modbas[3,7]=lie[f, lie[f, d]]}
in $\eta$.
In the following we will abreviate expressions of this form as {\tt ebb} and {\tt ffd}.

 The degree $n$ part of $rad(\eta)$ is given by {\tt ideal[n,$\{$modbas[3,1], modbas[3,7]$\}$]}.
Thus we get

\begin{equation} \label{eq28}
\begin{aligned} 
{\tt ideal[4,\{modbas[3,1], modbas[3,7]\}]}& = {\tt \{modbas[4, 4], modbas[4, 12]\}= \{eebb,ffeb\}}\cr
{\tt ideal[5,\{modbas[3,1], modbas[3,7]\}]}& = {\tt \{modbas[5, 9], modbas[5, 26]\}= \{ebfbb,ffeeb\}}\cr
{\tt ideal[6,\{modbas[3,1], modbas[3,7]\}]}& = {\tt \{modbas[6, 19], modbas[6, 53]\}= \{eebfbb,ffdeeb\}}\cr
{\tt ideal[7,\{modbas[3,1], modbas[3,7]\}]}& = {\tt \{modbas[7, 21], modbas[7, 100]\}= \{ebfbfbb,ffedeeb\}}\cr
{\tt ideal[8,\{modbas[3,1], modbas[3,7]\}]}& = {\tt \{modbas[8, 61], modbas[8, 206]\}= \{eebfbfbb,ffdedeeb\}}\cr
\end{aligned}
\end{equation}
etc. All this indicates that 
$$
rad(\eta)=\bigoplus_{i\geq 3} kc_i \oplus kd_i
$$
where the basis elements $c_i$ and $d_i$ are given in (\ref{eq28}). Furthermore the multiplication in $\eta$ is given
by  ${\tt mult[modbas[s,t],modbas[u,v]]}$. It follows that the ideal $rad(\eta)$ is abelian as far as we calculate
and that the generators $b,c,d,e,f,g$ operate very explitely e.g. as follows:
{\tt op[generators,modbas[7,21]]= $\{$0, 0, 0, modbas[8, 61], 0, 0$\}$} and
{\tt op[generators,modbas[7,100]]= $\{$0, 0, modbas[8, 206], 0, 0, 0$\}$} etc.
and all this seems to subsist in all higher dimensions, and there is a beginning
of a $2$-cocycle $\varphi: {\bar\eta}\times {\bar\eta} \longrightarrow rad(\eta)$
 that describes the extension:
$$
0 \longrightarrow rad(\eta) \longrightarrow \eta \longrightarrow {\bar\eta} \longrightarrow 0
$$
 This is proved in the following way. In section 4 it is proved that the ideal generated by $ebb$
and $ffd$ is of dimension $\leq 2$ in all positive degrees. Next
one puts $L=\bigoplus_{i\geq 3} k R_i \oplus k R_i'$ where the $R_i$ and $R_i'$ are basis elements of degree
$i$. Then one lets the variables $b,c,d,e,f,g$ operate on these elements in a way inspired by the above.
Then one forms the extension ${\bar\eta} \times_{\varphi}L$.
Now there is a natural {\it onto} map from $\eta$ to ${\bar\eta} \times_{\varphi}L$ since the quadratic relations
in $\eta$ are mapped to 0-relations in ${\bar\eta} \times_{\varphi}L$. Thus $rad({\eta})$
which is of dimension $\leq 2$ in all degrees is mapped onto $L$.
Therefore it must be an isomorphism, from which it follows that the Hilbert
series of the
enveloping algebra of $\eta$ is the product of the series of $\bar\eta$ and
the infinite product
given in Theorem \ref{thm1}. The complete details are given in section 4.

\section{Proofs} \label{sec:proofs}

In this section we prove that the Hilbert series of the enveloping algebra of $\eta$ is equal to 
$$
{1\over (1+t)(1-2t)^2(1-3t+t^2)}\prod_{n=2}^{\infty}{(1+t^{2n-1})^2\over(1-t^{2n})^2},
$$ 
from which the last part of Theorem \ref{thm1} follows.

In section \ref{sec:ideal}, it is shown that there are three ideals in $\bar\eta$ which annihilate eachother in all degrees $\geq 3$. It is also shown that they together generate $\bar\eta$ as a vector space in all degrees $\geq 3$. These three subalgebras correspond to the ideals $J11, J12$ and $J2$ in section \ref{sec:irr}.

In section \ref{sec:mod} it is shown that the direct sum of the three subalgebras is isomorphic to $\bar\eta$ as a Lie algebra in degrees $\geq 3$. Explicit presentations of the subalgebras are given, from which the Hilbert series of the enveloping algebra of $\bar\eta$ can be derived. As already mentioned in the beginning of section \ref{sec:irr}, the section also contains three general theorems.

Section \ref{sec:ext} is dedicated to the extension by the radical, which finally gives us the series of the enveloping algebra of $\eta$.

\subsection{Some ideals in $\eta$}\label{sec:ideal}
We start by proving that
the ideal generated by $ebb$ and $ffd$
is of dimension $\leq 2$ in all positive degrees, see Proposition
\ref{rad} below.

We will use the notation $abcd...$ to
denote an iterated Lie expression 
$[a,[b,[c,[d,\ldots]]]]\ldots]$. If $a,b,c,d,\ldots$ are generataors
for a Lie algebra, these iterated Lie expressions generate the Lie
algebra but are not linearly independent. 
Sometimes we also write $a.x$ for $[a,x]$. The
Lie subalgebra (of a given Lie algebra) generated by $a,b,...$ is
denoted $\suba(a,b,\ldots)$. (There is a corresponding function
$\suba[n,\{..\}]$ in {\bf liedim}.) We use $a^2$ to denote the Lie square of
an odd element $a$,
i.e. $a^2 =\frac{1}{2}[a,a]$. Observe that $a^2.x=aax$, but
$x.a^2=\frac{1}{2}xaa$. To avoid misunderstanding, we will not use
expressions like $xaa$ and $xa^2$. Thus we write $aab,bba$ for the two
basis elements in a free Lie algebra on two odd generators $a,b$. We use
$\deg(x)$ to denote the degree of $x$, which is the length of $x$ if $x$
is a string of generators.
\begin{lm}\label{threads} Suppose $x\in\eta$ satisfies
$dx=e^2x=bx=fx=0$ and $y\in\eta$ satisfies
$ey=d^2y=by=fy=c^2y=0$. Define $x_n$ and $y_n$ recursively by
$x_1=x,\ y_1=y$ and 
$$
x_n=\begin{cases}dx_{n-1}&\text{if $n$ is odd}\\
ex_{n-1}&\text{if $n$ is even}\end{cases}\quad\text{and}\quad y_n=\begin{cases}ey_{n-1}&\text{if $n$ is odd}\\
dy_{n-1}&\text{if $n$ is even}\end{cases}
$$
Then

\begin{align*}
&ex_n=dy_n=0\quad\text{if $n$ is even}\\
&dx_n=ey_n=0\quad\text{if $n$ is odd}\\
&b,c^2,bf,f^2 \ \text{ annihilate }\ x_n \ \text{ and }\ y_n \ \text{
for all }\ n\ge1
\end{align*}
\end{lm}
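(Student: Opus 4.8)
The plan is to prove all three displays simultaneously by induction on $n$, working throughout with the adjoint operators $\ad(b),\dots,\ad(g)$ on $\eta$. The first move is to rewrite each defining relation of $\eta$ as an operator identity: $[b,c]=0$ gives $\ad(b)\ad(c)=-\ad(c)\ad(b)$, and likewise $[c,e]=[c,f]=[b,g]=[d,g]=[f,g]=0$ give anticommutations; $c^{2}=-[b,d]$ gives $\ad(c)^{2}=-\ad(b)\ad(d)-\ad(d)\ad(b)$, $d^{2}=-[b,e]$ gives $\ad(d)^{2}=-\ad(b)\ad(e)-\ad(e)\ad(b)$, and $e^{2}=-[d,f]$ gives $\ad(e)^{2}=-\ad(d)\ad(f)-\ad(f)\ad(d)$; and $[e,f]=-[c,g]$ gives $\ad(e)\ad(f)+\ad(f)\ad(e)=-\ad(c)\ad(g)-\ad(g)\ad(c)$. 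Together with the graded Jacobi identity in the form $\ad([u,v])=[\ad u,\ad v]$, these are the only ingredients; every verification below amounts to moving $\ad(d)$ or $\ad(e)$ past $\ad(b),\ad(c),\ad(f),\ad(g)$ modulo operators already known to kill the element at hand.

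I would run the induction with a hypothesis slightly stronger than the statement, carrying for every $n$ that $b$, $c^{2}$, $d^{2}$, $e^{2}$, $f^{2}$ and $bf$ all annihilate $x_n$ and $y_n$, alongside the stated parity conditions. These extra vanishings cost almost nothing: for $x$ one gets $c^{2}x=0$ from $bx=dx=0$ and the identity for $\ad(c)^{2}$, while $d^{2}x$, $f^{2}x$ and $bfx$ vanish trivially because $dx=fx=0$, so the given data on $x$ is exactly this package; symmetrically, for $y$ it is $d^{2}y=0$ that comes for free (from $by=ey=0$) and $c^{2}y=0$ that must be, and is, assumed. This settles the base case. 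For the inductive step one splits on the parity of $n$: $x_{n+1}=ex_n$ when $n$ is odd, $x_{n+1}=dx_n$ when $n$ is even, and $y_{n+1}$ by the same rule with $d$ and $e$ exchanged, giving four arguments of identical shape, linked by the $d\leftrightarrow e$ symmetry between the two hypotheses (which is only approximate, since the relations do not treat $d$ and $e$ symmetrically --- precisely why $c^{2}y=0$ has to be hypothesized separately). In the step where the new generator is $e$ everything goes smoothly: the parity claim is $ex_{n+1}=e^{2}x_n=0$; $bx_{n+1}$, $c^{2}x_{n+1}$, $d^{2}x_{n+1}$ and the auxiliary $e^{2}x_{n+1}$ each collapse in one or two Jacobi steps to combinations of $bx_n$, $dx_n$, $c^{2}x_n$, $e^{2}x_n$, $bfx_n$, all zero; $f^{2}x_{n+1}=e(f^{2}x_n)=0$ since $\ad(f)^{2}$ commutes with $\ad(e)$ on $\eta$ (a consequence of $[c,f]=[f,g]=0$); and $bfx_{n+1}$ is handled as below, using the parity fact $dx_n=0$.

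The step I expect to be the main obstacle is the pair of claims $f^{2}x_{n+1}=0$ and $bfx_{n+1}=0$ in the case where the new generator is $d$ (and their $y$-analogues). Two things break the easy pattern: $[b,f]$ is not among the defining relations, so $\ad(b)\ad(f)$ cannot simply be moved past $\ad(d)$; and $\ad(f)^{2}$ does not commute with $\ad(d)$, the defect being the generically nonzero element $[d,f^{2}]=-[e,[e,f]]$. The remedy is to unwind the recursion one more step, $x_{n+1}=d(ex_{n-1})$, and run the Jacobi reductions against the stronger data available at the odd-index term $x_{n-1}$, where $dx_{n-1}=bx_{n-1}=c^{2}x_{n-1}=e^{2}x_{n-1}=bfx_{n-1}=0$. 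For $bfx_{n+1}$ one writes $\ad(b)\ad(f)=\ad([b,f])-\ad(f)\ad(b)$, uses the operator identities together with these vanishings to discard all correction terms, and is left with $\ad(d)\ad([[b,f],e])(x_{n-1})$; since $[[b,f],e]=[b,[f,e]]+[f,[b,e]]$ reduces (via $[b,c]=[b,g]=0$, which kill $[b,[f,e]]=-[b,[c,g]]$, and the relation $d^{2}=-[b,e]$, which gives $[f,[b,e]]=-[f,d^{2}]=[e,[e,d]]$) to the iterated bracket $[e,[e,d]]$, this operator equals $\ad(d)\bigl(\ad(e)^{2}\ad(d)-\ad(d)\ad(e)^{2}\bigr)$, which kills $x_{n-1}$ since $dx_{n-1}=e^{2}x_{n-1}=0$. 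The claim $f^{2}x_{n+1}=0$ is more stubborn: the naive unwinding only loops back on itself (indeed $f^{2}x_{n+1}$ equals $e^{2}(fx_n)$ here), so one most likely has to enlarge the inductive hypothesis further to include vanishing statements about $fx_n$ --- for instance $e^{2}(fx_n)=0$ on the even-index terms --- so that the ``thread'' being propagated is really the richer bundle $\{x_n, fx_n,\dots\}$ rather than $\{x_n\}$ alone. The $y$-side goes through identically, with $[d,f]=-e^{2}$ and $[e,f]=-[c,g]$ in the roles played above by their $x$-counterparts. Beyond these $f$-reductions --- and the constant care demanded by the graded signs, all six generators being odd --- the proof is a routine unwinding of the operator identities from the first paragraph.
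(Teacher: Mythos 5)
Your overall strategy is the same as the paper's: induct simultaneously on the full bundle of vanishings, treat each defining relation of $\eta$ as a super-commutation rule for the adjoint operators, and reduce each new claim to vanishings available at lower indices, unwinding the recursion one or two steps as needed. Your remark that $c^2y=0$ must be hypothesized separately (unlike $c^2x=0$) is correct, and your sketch for $bf$ --- unwind $x_{n+1}=de\,x_{n-1}$ and use $[[b,f],e]=[e^2,d]=[e,[e,d]]$ together with $dx_{n-1}=e^2x_{n-1}=0$ --- reproduces the paper's computation.

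The genuine gap is at the one place you explicitly flag: $f^2x_{n+1}=0$ when $x_{n+1}=dx_n$ (and the $y$-analogue). You correctly observe that one unwinding step only loops back, giving $f^2x_{n+1}=e^2(fx_n)$, and you leave it at ``one most likely has to enlarge the inductive hypothesis.'' This is not a proof, and the proposed enlargement is not what is needed. The paper closes this case with no enlargement of the hypothesis, by going back two steps and using one more identity in $\eta$, namely $[[f^2,d],e]=0$. Concretely, $f^2x_{n+1}=(f^2d)x_n+d(f^2x_n)=(f^2d)ex_{n-1}$ (the second summand vanishes by induction), and then $(f^2d)ex_{n-1}=[[f^2,d],e]\,x_{n-1}-e(f^2d)x_{n-1}$. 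One has $[[f^2,d],e]=[f^2,[d,e]]-[d,[f^2,e]]=0$: here $[f^2,e]=0$ comes from $[c,f]=[f,g]=0$, while $[f^2,[d,e]]=0$ follows by pushing $\ad(f)$ twice past $[d,e]$ using $[f,d]=-e^2$, $[f,e]=-[c,g]$, $[c,e]=0$, $[f,g]=0$. Hence $f^2x_{n+1}=-e(f^2d)x_{n-1}=-ef^2dx_{n-1}+edf^2x_{n-1}=0$, since $dx_{n-1}=0$ (odd index) and $f^2x_{n-1}=0$ (induction). So the missing ingredient in your argument is the identity $[[f^2,d],e]=0$, combined with landing $(f^2d)$ on the term two steps back, where both $\ad(d)$ and $\ad(f)^2$ are already known to kill.
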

\begin{proof} Using the relations in $\eta$ one may check that the
formulas hold for $n=1,2$. Suppose $n\ge3$ and that
\begin{align*}
&ex_k=dy_k=0\quad\text{for}\quad k<n,\ k \text{ even}\\
&dx_k=ey_k=0\quad\text{for}\quad k<n,\ k \text{ odd}\\
&bx_k=c^2x_{k-1}=(bf)x_{k-1}=f^2x_{k-1}=0\quad\text{for}\quad k<n\\
&by_k=c^2y_{k-1}=(bf)y_{k-1}=f^2y_{k-1}=0\quad\text{for}\quad k<n\\
\end{align*}
We claim that the same formulas also hold for $k=n$. We do the proof
for $x_n$, the proof for $y_n$ is similar. Observe that $e^2d=(bf)e$
which is used twice below, for $ex_n$ and $(bf)x_{n-1}$. It is also
used that
$(bf)d=e^2b$ and $(f^2d)e=f^2e=0$. 
\begin{align*}
n \text{ odd: }&dx_n&=&d^2x_{n-1}=-(be)x_{n-1}=-bex_{n-1}-ebx_{n-1}=0\\
n \text{ even, $n\ge4$: }&ex_n&=&e^2dx_{n-2}=(e^2d)x_{n-2}+deex_{n-2}=((bf)e)x_{n-2}=\\
&&=&(bf)ex_{n-2}-e(bf)x_{n-2}=0\\
n \text{ odd, $n\ge$3: }&bx_n&=&(bd)x_{n-1}-0=-c^2ex_{n-2}=-ec^2x_{n-2}=0\\
n \text{ even: }&bx_n&=&(be)x_{n-1}-0=-d^2x_{n-1}=-ddx_{n-1}=0\\
n \text{ odd, $n\ge$3: }&c^2x_{n-1}&=&(c^2e)x_{n-2}+0=0\\
n \text{ even, $n\ge$4:
}&c^2x_{n-1}&=&(c^2d)x_{n-2}+0=(d^2b)x_{n-2}=\\
&&=&d^2bx_{n-2}-bd^2x_{n-2}=-bdx_{n-1}=0\\
n \text{ odd, $n\ge$3:
}&(bf)x_{n-1}&=&((bf)e)x_{n-2}+0=e^2dx_{n-2}-de^2x_{n-2}=-dex_{n-1}=0\\
n \text{ even, $n\ge$4:
}&(bf)x_{n-1}&=&((bf)d)x_{n-2}+0=e^2bx_{n-2}-be^2x_{n-2}=0\\
n \text{ odd, $n\ge$3:
}&f^2x_{n-1}&=&(f^2e)x_{n-2}+0=0\\
n \text{ even, $n\ge$4:
}&f^2x_{n-1}&=&((f^2)d)x_{n-2}+0=((f^2d)e)x_{n-3}-e(f^2d)x_{n-3}=\\
&&=&0-ef^2dx_{n-3}+edf^2x_{n-3}=0
\end{align*}
\end{proof}
The lemma may be applied to $x=g, x=bbe$ or $y=c, y=c^2$ or $y=ffd$. It
follows from the lemma that also $d^2x_n=e^2x_n=d^2y_n=e^2y_n$ for all
$n\ge1$. If we define $d'=d+e$, it follows that $x_n,y_n$ may be
obtained from $x,y$ by succesive multiplication by $d'$. 

We will now
prove the claim from the previous section that the ideal $\rad (\eta)$
is abelian and at most two-dimensional in each degree $\ge3$. We will also
include $c^2$ in the ideal, which makes no difference in degrees
$\ge3$, since $b,c,e,f,g$ annihilate $c^2$ and $dc^2=bbe$. 
\begin{prp}\label{rad}
The ideal $\rad (\eta)$ in $\eta$ generated by $c^2$ and $ffd=cd'g$ is
abelian and linearly spanned by the
elements $r_{n}=(d')^{n-2}c^2$, $n\ge2$ and $r_{n}'=(d')^{n-3}cd'g$, $n\ge3$.
\end{prp}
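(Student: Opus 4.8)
The plan is to prove $\rad(\eta)=V$, where $V:=\spa\{\,r_n:n\ge2\,\}\cup\{\,r_n':n\ge3\,\}$, and then that $V$ is abelian. The inclusion $V\subseteq\rad(\eta)$ is immediate: $\rad(\eta)$ is an ideal containing $c^2$ and $ffd=cd'g$ (the last equality being a short computation in $\eta$ from the relations (\ref{eq25}), via $[d,f]=-e^2$ and $[c,g]=-[e,f]$), and every $r_n$ is obtained from $c^2$ and every $r_n'$ from $ffd$ by repeated bracketing with $d$ and $e$. The real work --- both for the reverse inclusion and for abelianness --- is done by Lemma~\ref{threads} applied to the seeds $y=c^2$ and $y=ffd$. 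Checking its hypotheses is routine (the only non-formal points are the vanishings $d^2c^2=0$ and $d^2(ffd)=0$, which come out of rewriting the left-hand sides as brackets of even-degree elements and using antisymmetry in the even part of $\eta$). The lemma then delivers: the two threads $y_n=(d')^{n-1}c^2=r_{n+1}$ and $y_n=(d')^{n-1}(ffd)=r_{n+2}'$; the annihilation statements that $b,\ bf,\ f^2,\ c^2$ --- and, by the remark following the lemma, $d^2$ and $e^2$ --- kill every $r_n$ and every $r_n'$; and the parity statements that for each $n$ exactly one of $d,e$ annihilates $r_n$ while the other sends it to $r_{n+1}$ (and likewise for the $r_n'$).

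For the reverse inclusion $\rad(\eta)\subseteq V$ it is enough to check that $V$ is a Lie ideal, i.e.\ that $x\cdot r_n\in V$ and $x\cdot r_n'\in V$ for every generator $x\in\{b,c,d,e,f,g\}$ and every $n$. For $x=b$ this is the annihilation statement just recorded, and for $x\in\{d,e\}$ it is the parity/shift statement. The cases $x\in\{c,f,g\}$ are the heart of the matter, and here the plan is to push $x$ through the tower of $d'$'s in $r_n=(d')^{n-2}c^2$ (respectively $r_n'=(d')^{n-3}ffd$) one factor at a time: by the graded Jacobi identity, moving $x$ past one $d'$ produces a term in which $x$ is replaced by the degree-$2$ bracket $[x,d']$ --- with $[c,d']=[c,d]$, and $[f,d']$, $[g,d']$ explicitly computable from (\ref{eq25}) --- plus a term in which $x$ now acts on a tower one step shorter. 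The recursion bottoms out at $c\cdot c^2=0$ (the odd Jacobi identity), $f\cdot c^2=0$ (using $[c,f]=0$), $g\cdot c^2=0$ (using $[c,g]=-[e,f]$ and $[c,e]=[c,f]=0$), and the analogous evaluations at $ffd$, while the stray terms are swept away by the annihilation of $r_n,r_n'$ by $b,bf,f^2,c^2,d^2,e^2$. In practice this should crystallize as a short simultaneous induction on $n$ identifying $c\cdot r_n$, $f\cdot r_n$, $g\cdot r_n$ and their primed counterparts as explicit members of $V$ (most of them $0$, the rest crossing between the two threads). I expect this bookkeeping to be the main obstacle: it is precisely here that one proves the ideal stays at most two-dimensional in each degree, and keeping the cancellations under control through the induction is the delicate part.

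For abelianness, Lemma~\ref{threads} gives $[c^2,r_n]=[c^2,r_n']=0$ at once; combined with the identity $ffd=-[f,e^2]$, the annihilation of $V$ by $e^2$, and the closure $[f,v]\in V$ from the previous paragraph, this yields $[ffd,v]=0$ for all $v\in V$. For a general pair, write $r_m=[z,r_{m-1}]$ with $z\in\{d,e\}$; the graded Jacobi identity gives, for any $v\in V$, $[r_m,v]=[z,[r_{m-1},v]]\pm[r_{m-1},[z,v]]$, and since $[z,v]=z\cdot v\in V$ both terms vanish by induction on $m$ with base case $[c^2,v]=0$. The identical argument with $r_m'=[z,r_{m-1}']$ and base case $[ffd,v]=0$ completes the proof that $\rad(\eta)=V$ is abelian.
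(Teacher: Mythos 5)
Your overall plan matches the paper's: apply Lemma~\ref{threads} to the seeds $c^2$ and $ffd$ to get the annihilation by $b$, $bf$, $f^2$, $c^2$ (and hence $d^2,e^2$) and the $d/e$ parity shifts, then close up under $c,f,g$ by pushing the generator through the tower of $d'$'s, and finally prove abelianness by the Jacobi recursion on $[r_n,r_m]$. The abelianness part of your sketch is fine. But there is a concrete gap in the closure-under-$c,g$ step, and the way you describe it would not go through.

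Pushing $c$ past one $d'$ gives $c.r_n = [c,d'].r_{n-1}\pm d'.(c.r_{n-1})$; by induction the second term vanishes, but the first term $[c,d].r_{n-1}$ is \emph{not} among the operators you list as already known to annihilate ($b,bf,f^2,c^2,d^2,e^2$) --- $[c,d]$ is an independent element of $\eta_2$, and similarly $[g,d']=[g,e]$ for $x=g$. So this stray term is not ``swept away'' by the annihilations you have in hand; if you try to continue, $[c,d].r_{n-1}$ produces $[[c,d],d'].r_{n-2}$ at the next step, and the family of iterated brackets $[[\dots[c,d'],d'],\dots,d']$ accumulates without bound. The paper's proof resolves this by observing that, after pushing all the way to the bottom, the one surviving term is $\bigl[(d')^{n-2}c\bigr].c^2 = \pm\, c^2.\bigl[(d')^{n-2}c\bigr]$, and then applying Lemma~\ref{threads} a \emph{second} time, now with seed $y=c$ (and $x=g$, respectively $y=c$, $x=g$ paired against $ffd$ for the primed thread), to conclude that $c^2$ (and $ffd$) annihilate that auxiliary thread. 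This extra use of the lemma on the auxiliary seeds $c$ and $g$ --- which the remark after Lemma~\ref{threads} explicitly flags as allowable --- is the key idea your outline is missing. Once $c$ and $g$ are handled this way, $f$ does \emph{not} need the same trick: the paper dispatches it with the much simpler observation $[f,d']=e^2+[c,g]$, so the stray term is killed by the already-established annihilation by $e^2$, $c$, $g$. (A minor point: you expect some of $c.r_n$, $f.r_n$, $g.r_n$ to ``cross between the two threads''; in fact all of them are zero.)
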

\begin{proof}
Lemma
\ref{threads} may be applied and we get
$br_n=br_n'=e^2r_n=e^2r_n'=d^2r_n=d^2r_n'=0$ for all $n\ge3$. 

We
now claim that $cr_n=0$ for $n\ge2$. 

This is proved by induction. For
$n=2$ we get $c.c^2=0$. Suppose the claim holds for all $k<n$.
\begin{align*}
cr_n=cd'r_{n-1}=(cd')r_{n-1}-0=((cd')d')r_{n-2}+d'(cd')r_{n-2}
\end{align*}
The last term above is zero by induction, since it is equal to
$d'cr_{n-1}+d'd'cr_{n-2}$. We may continue in the same way. In
the next step we use that $((cd')d').r_{n-3}=0$, which is true by
induction developing the expression as an iterated action on $r_{n-3}$
and using
$c.r_i=0$ for $i<n$ and $d'.r_i=r_{i+1}$. In the end we get
$$
\ldots(((((cd')d')d')d'\ldots)d').c^2
$$
With notation from Lemma \ref{threads}, using the commutative law,
this may be seen to be $\pm y_{n-1}.c^2$, where $y_1=c$. Hence the expression
is zero by the lemma, which proves the claim. 

In the same way it is proved that $gr_n=0$ for $n\ge2$. Here we apply
Lemma \ref{threads} with $x_n$ defined from $x_1=g$. 

Also in the same way it is proved that $cr_n'=gr_n'=0$. Here we use
Lemma \ref{threads} to get $(ffd)y_n=(ffd)x_n=0$. 

We have, by Lemma \ref{threads}, that $dr_n$ is either zero or $r_{n+1}$, the same is true for $r_n'$
and for the action by $e$. Hence it only remains to prove that the
action by $f$ is zero, which is now easy to prove by induction, since
$fd'=e^2+cg$.

To prove that the ideal is abelian, we prove by induction over $n$
that $[r_n,r_m]=[r_n,r_m']=0$ for all $n\ge2$ and $m\ge3$ and that
$[r_n',r_m']=0$ for all $n,m\ge3$. The induction start follows since,
$c^2r_m=c^2r_m'=0$ and $(ffd)r_m'=0$ 
by Lemma \ref{threads}. The induction step is easy:
$$
[r_n,r_m]=[d'r_{n-1},r_m]=d'.[r_{n-1},r_m]\pm [r_{n-1},r_{m+1}]
$$
and similarly for $[r_n,r_m']$ and $[r_n',r_m']$.

\end{proof}
We will now prove that the elements of degree $\ge3$ in the subalgebra
of $\eta$
generated by $d,e$, denoted $\suba(d,e)_{\ge3}$, is an ideal
modulo $\rad (\eta)$. To do this we first prove a lemma. 
\begin{lm}\label{cg} We have $cx=gx=0$ for all $x\in
\suba(d,e)_{\ge3}$.
\end{lm}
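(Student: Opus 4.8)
The plan is to prove Lemma~\ref{cg} by a double induction on the degree of $x\in\suba(d,e)_{\ge3}$, carrying along enough auxiliary annihilation relations to make the induction go through. Since $\suba(d,e)$ is spanned by iterated Lie brackets in the two generators $d$ and $e$, every element of degree $n\ge 3$ is a linear combination of elements of the form $d.x'$ or $e.x'$ with $x'\in\suba(d,e)$ of degree $n-1\ge 2$. So it suffices to prove the statement for such monomials, and for the inductive step I would write $c.(d.x')=(cd).x'+d.(c.x')$ (graded Jacobi) and similarly $c.(e.x')=(ce).x'+e.(c.x')$, and likewise with $g$ in place of $c$. The second summand in each case is handled by the inductive hypothesis once we reach degree $\ge 3$; the first summand is where we must use the defining relations of $\eta$ from (\ref{eq25}): $cd=-c^2-[b,d]+\ldots$ — more precisely $[c,d]=0$ only modulo the relation $c^2+[b,d]$, i.e. in $\eta$ we have $[c,d]=-c^2-[b,e]\cdot(\text{?})$ — so I need to track the exact relations $[B,C]=0$, $C^2=-[B,D]$, $D^2=-[B,E]$, $[C,E]=0$, $[C,F]=0$, $E^2=-[D,F]$, $[B,G]=0$, $[E,F]=-[C,G]$, $[D,G]=0$, $[F,G]=0$.

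Concretely, in $\eta$ we have $[c,d]=0$? No: $[C,D]$ is not among the listed relations, but $C^2+[B,D]=0$ gives $[b,d]=-c^2$, and $[c,e]=0$, $[d,g]=0$. The key computations will be: $[c,d].x'$ — rewrite using that $[c,d]$ lies in the ideal together with known relations to land in $\rad(\eta)$ or to reduce to lower degree; $[c,e].x'=0$ directly; $[g,d].x'=-[d,g].x'=0$ directly; $[g,e].x'$ — here $[G,E]$ is not a relation, so I must express $g.(e.x')$ differently, perhaps using $[e,f]=-[c,g]$ to move $g$ past and pick up a $[c,g]=-[e,f]$ term which again must be controlled on $\suba(d,e)_{\ge3}$. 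The cleanest route is probably to first establish, as part of the same induction or a preliminary sub-lemma, that $b.x=f.x=0$ for all $x\in\suba(d,e)_{\ge3}$ as well (using $[b,d]=-c^2$, $[b,e]=-d^2$, $[b,f]=0$, $[f,d]=-e^2+[c,g]$ hmm, $[f,d]$: we have $[D,G]=0$ but $[F,D]$? not listed; $E^2+[D,F]=0$ gives $[d,f]=-e^2$, and $[f,e]=-[c,g]$ from $[E,F]+[C,G]=0$), so that the "$[b,d]$-type" and "$e^2$-type" terms that appear when reducing $[c,d]$ and $[g,e]$ are killed, leaving only genuine $\rad(\eta)$-contributions which vanish in $\eta/\rad(\eta)$ or feed back into the induction. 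I expect the statement to really be: $b,c,f,g$ all annihilate $\suba(d,e)_{\ge3}$, and the induction must prove all four simultaneously because each relation mixes them.

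The main obstacle I anticipate is bookkeeping the relation $[e,f]=-[c,g]$ (and its companion $[d,f]=-e^2$), since applying $g$ or $c$ to an iterated bracket in $d,e$ forces one to commute these generators past many $d$'s and $e$'s, and each transposition may spawn a term involving $f$, $b$, or a Lie-square, so the induction hypothesis must be strong enough to absorb every such term. To keep this manageable I would organize the proof around the element $d'=d+e$ introduced after Lemma~\ref{threads}: since on the relevant submodules $d$ and $e$ act "coherently," working with $d'$ collapses several case distinctions (odd/even degree) into one. I would then verify the base cases (degrees $3$ and $4$, i.e. for $x$ one of $dde$, $ddd$, $eed$, $eee$, $[d,e]$-free brackets — actually $\suba(d,e)_3$ is spanned by $dde,eed$ and $\suba(d,e)_4$ by $ddde,ddee,eede$ etc.) by direct computation using (\ref{eq25}), and let the general step run. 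A convenient reduction is that $\suba(d,e)_{\ge3}$ is generated as an ideal-of-itself by the degree-$3$ elements, so once the degree-$3$ case and the propagation step ("$c,g$ kill $x\Rightarrow c,g$ kill $d.x$ and $e.x$ modulo lower degree") are in place, we are done.

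\begin{proof}[sketch]
By the graded Jacobi identity every element of $\suba(d,e)$ of degree $n\ge 3$ is a sum of terms $d.x'$, $e.x'$ with $x'\in\suba(d,e)_{n-1}$, so it suffices to treat these. Using the relations $[c,d]\equiv -c^2 \pmod{[b,e]\text{-terms}}$, $[c,e]=0$, $[d,g]=0$, $[e,f]=-[c,g]$ from (\ref{eq25}), together with the auxiliary fact (proved by the same induction) that $b$ and $f$ also annihilate $\suba(d,e)_{\ge3}$, one rewrites $c.(d.x')$, $c.(e.x')$, $g.(d.x')$, $g.(e.x')$ so that every resulting term is either $d$ or $e$ acting on something annihilated by $c$ (resp. $g$) by the inductive hypothesis, or lies in the ideal $\rad(\eta)$ of Proposition~\ref{rad}, on which $c$ and $g$ act as zero. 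Switching to $d'=d+e$ as after Lemma~\ref{threads} removes the parity case split. The base cases, $\suba(d,e)_3$ and $\suba(d,e)_4$, are checked directly from (\ref{eq25}). This completes the induction.
\end{proof}
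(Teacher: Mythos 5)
Your general shape (induction via the Jacobi identity, peeling off a $d$ or $e$ and controlling the leftover $(cd).x'$, $(ge).x'$ terms) matches the paper's, but the auxiliary claim you plan to carry through the induction is false. You conjecture that $b,c,f,g$ all annihilate $\suba(d,e)_{\ge3}$; however, $b$ and $f$ act nontrivially there — the proof of Proposition~\ref{de} computes $b.(eed)=-ddde$ and $f.(dde)=-eeed$, both nonzero, which is exactly why that proposition asserts only that $\suba(d,e)_{\ge3}+\rad(\eta)$ is an ideal rather than that $b,f$ annihilate. So a simultaneous induction on the four-generator annihilation breaks already at degree $3$, and the $d^2$- and $e^2$-type terms you were hoping the $b,f$-claim would absorb are in fact genuine nonzero contributions in $\suba(d,e)$.

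The ingredient you are missing is Lemma~\ref{threads}, which the paper invokes precisely for this lemma: it kills the hard base cases $ceed=e^2dc$ and $gdde=d^2eg$ (apply the lemma with $y=c$ and $x=g$), and in the inductive step it reduces to Lie words with no repeated letters, i.e.\ to the thread elements $x_n,y_n$, on which the lemma gives exact annihilation data. Without some substitute for this, there is no reason for $(cd).x'$ or $(ge).x'$ to vanish: $[c,d]$ and $[e,g]$ are nonzero basis elements of $\eta_2$ (the $D_c$ and $G_e$ of section~\ref{sec:mod}), there is no defining relation expressing either in terms of the others, and neither lies in $\rad(\eta)$, so the two escape routes you sketch (``lands in $\rad(\eta)$'' or ``reduces to lower degree'') are not available. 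The parts of your sketch that do overlap with the paper — passing to $d'=d+e$ to collapse parity, and using that $c,g$ annihilate $\rad(\eta)$ via Proposition~\ref{rad} — are fine in isolation but do not close the argument.
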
 
\begin{proof}
Proof by induction. We have $cdde=geed=0$ since
$c.d^2=g.e^2=ce=gd=0$. Also $ceed=e^2dc$, $gdde=d^2eg$ which are
zero by Lemma \ref{threads}. It is enough to consider $x$ as an
iterated Lie product of $d$:s and $e$:s. The induction step for $c$ is
easy if $x=ex'$. Otherwise, by induction, $cx=(cd)x'$. Since
$cd^2=0$, we may suppose $x=dex''$. Using Lemma \ref{threads} we see
that the only case we need to consider is when $x$ never has a
repetition of two d:s or e:s. But $x$ starts to the right with $dde$ or
$eed$. Hence the result follows for $c$, and $g$ is handled in the same
way. 
\end{proof}
\begin{prp}\label{de}
We have that $\suba(d,e)_{\ge3}+\rad (\eta)$ is an ideal in $\eta$.
\end{prp}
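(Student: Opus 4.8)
The plan is the following. Put $A:=\suba(d,e)_{\ge3}+\rad(\eta)$. Since $\suba(d,e)_{\ge3}$ is clearly a Lie subalgebra of $\eta$ and $\rad(\eta)$ is an ideal of $\eta$ by Proposition \ref{rad}, $A$ is itself a Lie subalgebra, so to show it is an ideal it suffices to check $[a,A]\subseteq A$ for each of the six generators $a\in\{b,c,d,e,f,g\}$; and since $[a,\rad(\eta)]\subseteq\rad(\eta)\subseteq A$ this reduces to $[a,\suba(d,e)_{\ge3}]\subseteq A$. For $a=d$ and $a=e$ this is immediate, as $[d,\suba(d,e)_{\ge3}]\subseteq\suba(d,e)_{\ge4}\subseteq A$ and similarly for $e$; in particular $[d,A]\subseteq A$ and $[e,A]\subseteq A$. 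For $a=c$ and $a=g$ it is exactly Lemma \ref{cg}. So only $a=b$ and $a=f$ need work, and that is where the content of the proposition lies.

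For $a=b$ and $a=f$ I would argue by induction on $n=\deg x$ for $x\in\suba(d,e)_n$, $n\ge3$. Every such $x$ is a linear combination of left-normed brackets $a_1a_2\cdots a_n$ with $a_i\in\{d,e\}$, so one may take $x=[a_1,w]$ with $a_1\in\{d,e\}$ and $w\in\suba(d,e)_{n-1}$, and expand $[b,x]$ and $[f,x]$ by the graded Jacobi identity. The relations (\ref{eq25}) rewrite the four relevant brackets of a generator with $d$ or $e$ as degree-$2$ elements,
$$
[b,d]=-c^2,\qquad [b,e]=-d^2,\qquad [f,d]=-e^2,\qquad [f,e]=-[c,g],
$$
so Jacobi gives $[b,[a_1,w]]=[[b,a_1],w]\pm[a_1,[b,w]]$, and likewise for $f$. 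When $\deg w\ge3$ the inner term $[a_1,[b,w]]$, resp.\ $[a_1,[f,w]]$, lies in $[a_1,A]\subseteq A$ by the induction hypothesis together with $[d,A],[e,A]\subseteq A$; and the outer term is handled by three observations: $c^2$ generates $\rad(\eta)$, so $[c^2,w]\in\rad(\eta)\subseteq A$; $d^2,e^2\in\suba(d,e)$, so $[d^2,w],[e^2,w]$ lie in $\suba(d,e)$ and have degree $\ge3$; and $[[c,g],w]=[c,[g,w]]+[g,[c,w]]=0$ by Lemma \ref{cg} applied to $w\in\suba(d,e)_{\ge3}$. This disposes of every $n\ge4$.

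The base case $n=3$ is then checked by hand. Here $\suba(d,e)_3$ is spanned by $dde=[d,[d,e]]$ and $eed=[e,[e,d]]$, and a direct computation from (\ref{eq25}) gives
$$
[b,dde]=-[c^2,[d,e]],\quad [b,eed]=-[d,dde],\quad [f,dde]=-[e^2,[d,e]],\quad [f,eed]=-[d,ffd],
$$
each of which lies in $\suba(d,e)_{\ge3}+\rad(\eta)$: the first and fourth because $c^2$ and $ffd$ generate $\rad(\eta)$, the second and third because they lie in $\suba(d,e)$. The two places where a naive Jacobi expansion runs in circles, and where the real work is, are: the identity $[d,[c,d]]=0$ (a consequence of $[c,d^2]=[d^2,c]=0$, which follows from $d^2=-[b,e]$ together with $[b,c]=[c,e]=0$), needed to collapse $[f,dde]$; and the chain $ffd=[f,[f,d]]=-[f,e^2]=[e,[e,f]]$ together with $[c,[e,g]]=-[e,[c,g]]=[e,[e,f]]=ffd$, which is what lets one recognize the $c$- and $g$-terms produced in $[f,eed]$ as the element $-[d,ffd]$ of $\rad(\eta)$.

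I expect this last point to be the only genuine obstacle: once $ffd$ is identified as $[e,[e,f]]$, hence as a member of the ideal $\rad(\eta)$ (and similarly $[c,[e,g]]=ffd$), the degree-$3$ identities collapse and the inductive step above is purely formal. As a sanity check, the base-case identities can also be confirmed with {\tt liedim}.
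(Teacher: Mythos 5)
Your argument is correct and is essentially the paper's own proof: Lemma \ref{cg} disposes of $c,g$, the base case is the same direct degree-$3$ computation of $b,f$ acting on $dde$ and $eed$, and the inductive step is the same Jacobi expansion using the rewriting rules $[b,d]=-c^2$, $[b,e]=-d^2$, $[f,d]=-e^2$, $[f,e]=-[c,g]$. The only cosmetic variation is that you place $[c^2,w]$ in $\rad(\eta)$ and invoke that $\rad(\eta)$ is an ideal, whereas the paper notes (again via Lemma \ref{cg}) that $c^2x=ccx=0$ for $x\in\suba(d,e)_{\ge3}$, so that term actually vanishes outright.
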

\begin{proof}
By Lemma \ref{cg} it is enough to consider the action by $b$ and
$f$. The proof is by induction. We have $bdde=edc^2\in \rad (\eta)$ and $beed=-ddde$. Also
$feed=-dffd\in \rad (\eta)$ and $fdde=-eeed$. We have
$bex=-d^2x-ebx$ and $fdx=-e^2x-dfx$ which stay in
$\suba(d,e)_{\ge3}+\rad (\eta)$ by induction. Also $bdx=-c^2x-dbx=-dbx$
and $fex=-(cg)x-efx=-efx$ by Lemma \ref{cg} and we are done.  

\end{proof}
\begin{prp}\label{bf}
We have that $\suba(b,f)_{\ge3}+\rad (\eta)$ is an ideal in $\eta$ and
$\suba(d,e).\suba(b,f)_{\ge3}\subset \rad (\eta)$, more precisely, let
$r_n,r_n'\in\rad(\eta)$ be defined as in Proposition \ref{rad} and
define $b_n,f_n\in\suba(b,f)$ recursively by 
 $b_2=b^2$, $f_2=f^2$
$$
b_n=\begin{cases}fb_{n-1}&\text{if $n$ is odd}\\
bb_{n-1}&\text{if $n$ is even}\end{cases}\quad\text{and}\quad
f_n=\begin{cases}bf_{n-1}&\text{if $n$ is odd}\\ 
ff_{n-1}&\text{if $n$ is even}\end{cases}
$$
 Then $dx=ex=0$ for any iterated Lie product of length at least $4$
in $\suba(b,f)$ which is different from $b_n$ and $f_n$
for all $n$. Also for $n$ even, $n\ge4$, $ef_n=db_n=0$ and $df_n=-r_{n+1}'$,
$eb_n=r_{n+1}$  and for $n$ odd, $df_n=eb_n=0$ and $ef_n=r_{n+1}'$,
$db_n=-r_{n+1}$.

\end{prp}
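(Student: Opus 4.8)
The plan is to prove Proposition \ref{bf} by the same inductive machinery used in Propositions \ref{rad}, \ref{de} and Lemma \ref{cg}, tracking carefully how the generators $d,e$ (and then $b,f$) act on iterated Lie products in $\suba(b,f)$. First I would record the base cases: direct computation with the relations in $\eta$ shows $bdde=edc^2$, $beed=-ddde$, $feed=-dffd$, $fdde=-eeed$, and more generally that $dx=ex=0$ for every iterated Lie product $x$ of length $\ge 4$ in $\suba(b,f)$ except for the special ``threads'' $b_n$ and $f_n$, where one gets the stated $r_{n+1}$, $r_{n+1}'$ up to sign; here one invokes Lemma \ref{threads} applied to $x=bbe$ (equivalently the element generating the $b_n$-thread) and to $y=ffd$ (the $f_n$-thread), which already tells us $b,c^2,bf,f^2$ annihilate these threads and that $d,e$ move them along the thread or kill them. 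The key identities to exploit repeatedly are $bd=-c^2-db$ acting modulo $\rad(\eta)$ (so $bd\equiv -db$ in degrees $\ge3$ by Proposition \ref{rad}), $be=-d^2-eb$, $fd=-e^2-df$, $fe=-(cg)-ef$, together with $cx=gx=0$ for $x\in\suba(d,e)_{\ge3}$ from Lemma \ref{cg}.

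Next I would prove that $\suba(b,f)_{\ge3}+\rad(\eta)$ is an ideal. Since $\rad(\eta)$ is already an ideal (Proposition \ref{rad}), it suffices to show that $d,e,b,f,c,g$ carry $\suba(b,f)_{\ge3}$ into $\suba(b,f)_{\ge3}+\rad(\eta)$. The action of $b$ and $f$ stays inside $\suba(b,f)$ by definition, so the content is the action of $d,e,c,g$. For $c$ and $g$ I would argue that on any iterated product $x$ in $\suba(b,f)$ of length $\ge3$, repeatedly using $cb, cf, gb, gf$ relations (the relations in $\eta$ give $[c,f]=0$, $[b,g]=0$, and $cb$, $gf$ reduce via the quadratic relations) pushes $c$ or $g$ to the right where it annihilates (using $c^2=0$ modulo the ideal, and $g^2=0$, etc.), so $cx,gx\in\rad(\eta)$. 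For $d$ and $e$: by the previous paragraph $dx=ex=0$ unless $x$ is one of the threads $b_n,f_n$ (or differs from one by an element already known to be annihilated), and on the threads the action lands in $\rad(\eta)$ by the explicit formulas $db_n=\mp r_{n+1}$, $eb_n=r_{n+1}$ (parity-dependent) and $df_n=\mp r_{n+1}'$, $ef_n=r_{n+1}'$. Assembling these gives the ideal statement and simultaneously the inclusion $\suba(d,e).\suba(b,f)_{\ge3}\subset\rad(\eta)$, since $\suba(d,e)$ is generated by $d,e$ and we have just controlled the action of $d,e$ on all of $\suba(b,f)_{\ge3}$.

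The precise ``threads'' claim — that the only iterated products of length $\ge4$ in $\suba(b,f)$ not killed by $d$ and $e$ are exactly $b_n$ and $f_n$, with the stated images — is the heart of the proposition and I would prove it by induction on length, following the pattern in Lemma \ref{cg}. An iterated product $x=a_1a_2\cdots$ with $a_i\in\{b,f\}$: if $x=fx'$ one uses $df x' = -e^2x'-dfx'$ type identities (more carefully $d(fx')=-(e^2+df)\cdot$-style relations, i.e.\ $dfx'=[d,f]x'+fdx' = -e^2x'+fdx'$) and induction on $dx'$, $ex'$; if $x=bx'$ similarly with $[b,d]=-c^2$, $[b,e]=-d^2$. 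The only way to avoid producing an $e^2$ or $d^2$ or $c^2$-type cancellation that by Lemma \ref{threads} kills everything further out is when the word $x$ read from the right never repeats a letter immediately — but any word of length $\ge2$ in $\{b,f\}$ read from the right either starts $bb\ldots$, $ff\ldots$, or eventually, after the alternating prefix, hits a repeated letter; tracing the recursion one finds the surviving configurations are precisely the alternating-then-squared words that define $b_n$ and $f_n$, and the surviving image is pushed onto $bbe$-thread or $ffd$-thread, i.e.\ into $\rad(\eta)$. The main obstacle I anticipate is the bookkeeping of signs and of the parity cases ($n$ even versus odd) in these thread formulas: one must be consistent about the convention $a^2=\tfrac12[a,a]$ and about $a^2.x=aax$ versus $x.a^2=\tfrac12 xaa$ flagged in the text, and verify that the recursive definitions of $b_n,f_n$ here mesh with the recursive definitions of $r_n,r_n'$ in Proposition \ref{rad} so that $d'=d+e$ acts as a shift on both sides compatibly. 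Once the thread dictionary is pinned down, the ideal statement and the containment $\suba(d,e).\suba(b,f)_{\ge3}\subset\rad(\eta)$ follow formally.
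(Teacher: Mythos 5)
Your overall strategy matches the paper's proof: reduce everything to the precise thread formulas for $db_n,eb_n,df_n,ef_n$, prove them by induction on $n$ using the quadratic relations $[b,d]=-c^2$, $[b,e]=-d^2$, $[f,d]=-e^2$, $[f,e]=-cg$ together with annihilation of the radical threads. However, several details are confused in ways worth flagging. First, the base cases you list ($bdde=edc^2$, $beed=-ddde$, $feed=-dffd$, $fdde=-eeed$) are taken from the proof of Proposition~\ref{de}, which concerns $d,e$ acting on $\suba(d,e)$; they are not the base cases for the present proposition. The relevant computations here are $ebbf=dffb=0$, $ef_3=-effb=r_4'$, $db_3=-dbbf=-r_4$ at $n=3$. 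Second, Lemma~\ref{threads} is not applied to $b_n,f_n$ themselves (those live in $\suba(b,f)$ and the lemma is about $d,e$--threads satisfying the hypotheses $dx=e^2x=bx=fx=0$, etc.); it is applied, via Proposition~\ref{rad}, to the elements $r_n,r_n'$ to guarantee $br_n=fr_n=br_n'=fr_n'=0$, and precisely this is what closes the induction after the Jacobi move $db_n=dbb_{n-1}=-c^2b_{n-1}-bdb_{n-1}$, etc. The entire content of Proposition~\ref{bf} is exactly the statement that $d,e$ carry the $b_n,f_n$ threads into the $r_n,r_n'$ threads --- it is not a consequence of Lemma~\ref{threads} alone, as your phrasing suggests.

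Third, the vanishing of terms such as $c^2 b_{n-1}$ and $(cg)b_{n-1}$ does not come from Proposition~\ref{rad}; it is the elementary fact, stated at the start of the paper's proof as $c.J=g.J=0$ for $J=\suba(b,f)_{\ge3}$, which holds simply because $[c,b]=[c,f]=[g,b]=[g,f]=0$, so $c$ and $g$ kill every iterated bracket of $b$'s and $f$'s of length $\ge2$. Your more elaborate ``push $c$ or $g$ to the right'' argument is unnecessary for the same reason. Finally, note a sign slip in your Jacobi manipulation: with $d,f$ odd, $[d,[f,x']]=[[d,f],x']-[f,[d,x']]=-e^2x'-fdx'$, not $-e^2x'+fdx'$. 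Once these points are corrected the argument coincides with the one given in the paper: handle the four parity/thread cases of the inductive step, each terminating with a $b$ or $f$ hitting an $r_n$ or $r_n'$ and vanishing, or with a $d$ or $e$ advancing $r_n$ or $r_n'$ one step along its thread.
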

\begin{proof}
Put $J=\suba(b,f)_{\ge3}$. We have $c.J=g.J=0$. Thus it is enough to
prove the second claim. We have $ebbf=dffb=0$ and $ef_3=-effb=r_4'$, $db_3=-dbbf=
-r_4$. Hence the claim follows for $n=3$. Suppose $n\ge4$ and the claim
is true for all $k<n$. We have by induction and Lemma \ref{threads} 
\begin{align*}
n \text{ even: }&db_n=dbb_{n-1}=-c^2b_{n-1}-bdb_{n-1}=br_{n}=0\\
&eb_n=ebb_{n-1}=-d^2b_{n-1}-beb_{n-1}=dr_n=r_{n+1}\\
&df_n=dff_{n-1}=-e^2f_{n-1}-fdf_{n-1}=-er_{n}'=-r_{n+1}'\\
&ef_n=eff_{n-1}=-(cg)f_{n-1}-fef_{n-1}=-fr_n'=0\\
n \text{ odd, $\ge5$: }&db_n=dfb_{n-1}=-e^2b_{n-1}-fdb_{n-1}=-er_{n}=-r_{n+1}\\
&eb_n=efb_{n-1}=-(cg)b_{n-1}-feb_{n-1}=-fr_n=0\\
&df_n=dbf_{n-1}=-c^2f_{n-1}-bdf_{n-1}=br_{n}'=0\\
&ef_n=ebf_{n-1}=-d^2f_{n-1}-bef_{n-1}=dr_n'=r_{n+1}'
\end{align*}

\end{proof}
It follows from Proposition \ref{bf} that $b_n,f_n$ is not in the
linear span of all iterated Lie products of length $n$ different from $b_n,f_n$ in
$\suba(b,f)$, if we knew that $r_n\ne0$, $r_n'\ne0$ for all $n$. We
can prove this without the assumption of the non-vanishing of
$r_n,r_n'$ and this result will be needed in section \ref{sec:ext}.

\begin{prp}\label{zero} Let $B_n,F_n$ be elements in the free Lie
algebra $\F(B,F)$ defined as in Proposition \ref{bf}. Then the linear
span of all iterated Lie products of length $\ge3$ which are not of
the form $B_n,F_n$ for any $n$ is an ideal in $\F(B,F)$ and also $B_n$
and $F_n$ are linearly independent modulo this ideal.
\end{prp}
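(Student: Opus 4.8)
The plan is to work entirely inside the free Lie algebra $\F(B,F)$ on two odd generators and to keep track of a convenient monomial-type basis. Recall that, by the Poincar\'e--Birkhoff--Witt / Lyndon basis theory, $\F(B,F)$ has a basis consisting of (suitable normalizations of) iterated Lie products; in low degrees the iterated products $abcd\ldots$ introduced above, while spanning, have exactly one linear relation in each degree $\ge 2$ coming from the Jacobi identity, and the $B_n,F_n$ are specific such products. I would first record, for small $n$, an explicit complement: a list $\Lambda_n$ of iterated Lie products of length $n$ such that $\Lambda_n\cup\{B_n,F_n\}$ is a basis of $\F(B,F)_n$ for $n\ge 3$. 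The assertion of the proposition is then twofold: (i) $I:=\bigoplus_{n\ge 3}\spa(\Lambda_n)$ is a Lie ideal of $\F(B,F)$, and (ii) $B_n,F_n$ are nonzero and linearly independent modulo $I$ — equivalently, $\F(B,F)_n/I_n$ is $2$-dimensional for each $n\ge 3$ with basis the images of $B_n,F_n$.

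For (i) I would argue exactly as in the proofs of Propositions \ref{de} and \ref{bf}, but now \emph{without} any quotient: it suffices to show $[B,\Lambda_n]\subseteq I$ and $[F,\Lambda_n]\subseteq I$, i.e. that bracketing a spanning product of length $n$ not of the form $B_n,F_n$ by a generator lands in the span of spanning products of length $n+1$ not of the form $B_{n+1},F_{n+1}$. Using the recursion $B_n=[F,B_{n-1}]$ or $[B,B_{n-1}]$ (and similarly $F_n$), the only way to \emph{create} a $B_{n+1}$ or $F_{n+1}$ by left-bracketing is to start from $B_n$ or $F_n$; conversely, bracketing the chosen complement elements never produces these, modulo the Jacobi relations that I would resolve once and for all in the ambient free algebra. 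This is a finite bookkeeping argument on the "shape" (the word in $B,F$ read off the iterated product), and it is the main obstacle: one must choose the complement $\Lambda_n$ stably in $n$ and verify closure under both $\ad B$ and $\ad F$, handling the parity cases ($n$ even/odd) separately as in Proposition \ref{bf}. The key simplifying observation, already present above, is that $B_n,F_n$ are obtained from $B,F$ by successive left-multiplications alternating the two generators, so their defining data is a single alternating word; I would phrase $\Lambda_n$ as "all other alternating-reduced products," so that the $\ad B,\ad F$ action on shapes is transparent.

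For (ii), once $I$ is known to be an ideal, $\F(B,F)/I$ is a graded Lie algebra which in each degree $n\ge 3$ is spanned by the images of $B_n$ and $F_n$, so $\dim (\F(B,F)/I)_n\le 2$. To see the images are actually linearly independent (hence the bound is an equality and $B_n,F_n\ne 0$ mod $I$), I would exhibit a nonzero $2$-dimensional test quotient of $\F(B,F)$ in which $I$ dies and the images of $B_n,F_n$ are visibly independent. The natural candidate is the abelianization-by-an-$\ad$-module construction already used in section \ref{sec:irr}: map $\F(B,F)$ onto the semidirect product $\F(D,E)\ltimes_\varphi L$ of an alternating chain, i.e. send $B,F$ to elements $D',E'$ acting on a rank-$2$ abelian module $L=\bigoplus_{n\ge3}kR_n\oplus kR_n'$ by the explicit formulas $D'.R_n=R_{n+1}$ etc., arranged so that every quadratic relation of $\F(B,F)$ maps to zero and $B_n\mapsto R_n$, $F_n\mapsto R_n'$. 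Since such a map exists and $R_n,R_n'$ are a basis of $L_n$, the images of $B_n,F_n$ are independent and $I$ is contained in the kernel; combined with the upper bound from (i) this forces $\ker=I$ and finishes the proof. The complete verification of the cocycle/module structure making this map well defined is the deferred computational content, but structurally it is the same device as in Proposition \ref{rad}.
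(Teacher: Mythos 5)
Your proposal diverges from the paper's argument, and both halves of your plan have real difficulties. The paper does not build an explicit complement $\Lambda_n$ or a Lie-algebra test quotient. Instead it defines two \emph{derivations} $\phi_B,\phi_F:\F(B,F)\to R$ into the one-dimensional-per-degree ``thread'' module $R_1\xrightarrow{B}R_2\xrightarrow{F}R_3\xrightarrow{B}\cdots$, observes that every element of $\F(B,F)$ of degree $\ge3$ annihilates $R$ (one checks this for the two degree-$3$ basis elements and propagates), so that for $\deg X\ge3$ one has $\phi_B([a,X])=a.\phi_B(X)$ with no derivation term, and concludes that $I=\ker\phi_B\cap\ker\phi_F$ in degrees $\ge3$ is an ideal. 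Computing $\phi_B(B_n)=R_n\ne0$, $\phi_F(F_n)=\pm R_{n+1}\ne0$, $\phi_B(F_n)=\phi_F(B_n)=0$ then gives in one stroke that $I$ has codimension exactly $2$ with complement $\spa(B_n,F_n)$, identifying $I$ with the span in the statement. The point is that this single construction proves the ideal property and the linear independence simultaneously, sidestepping any bookkeeping of bases.

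For your part (i), the ``shape'' argument is not sound as stated. You reason that left-bracketing by a generator can only create $B_{n+1}$ or $F_{n+1}$ starting from $B_n$ or $F_n$, treating iterated products as if distinct words were linearly independent. They are not: for example $[B,[B,[B,F]]]$ is a nonzero scalar multiple of $B_4=[B,[F,B^2]]$ even though the two words differ, so the word $BBBF$ \emph{is} ``of the form $B_4$'' once Jacobi is taken into account, and a complement $\Lambda_n$ cannot be read off from shapes alone. You flag this step as ``the main obstacle''; the derivation argument is precisely the device that makes it disappear.

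For your part (ii), the proposed test quotient does not exist. You want a Lie algebra map $\F(B,F)\to\F(D,E)\ltimes L$ sending $B,F$ to degree-$1$ elements that act as an alternating shift on a rank-$2$ abelian module $L$, with $B_n\mapsto R_n$, $F_n\mapsto R_n'$. Since $\F(B,F)$ is free, a Lie map is determined by where $B,F$ go, and then the image of $B_4=-[B,[B,[B,F]]]$ is forced; with the thread action you describe this image is \emph{not} $R_4$ (the bracket $[B,B_3]$ in that target would vanish when it must not), so the required morphism is not well-defined. Relatedly, your phrase ``every quadratic relation of $\F(B,F)$ maps to zero'' is vacuous --- $\F(B,F)$ is free. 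The paper works with derivations (i.e.\ $1$-cocycles into a module) rather than Lie algebra quotients exactly to avoid this obstruction: $\phi_B,\phi_F$ are not Lie maps, they only become $\eta_1$-linear in degrees $\ge3$, and that weaker property is enough.
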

\begin{proof} Define a graded module $R$ over $\F(B,F)$, where $B,F$
has degree 1, by letting $R_n$ be a
basis of $R$ in degree $n$ for all $n\ge1$ and $B.R_n=F.R_{n+1}=0$ if
$n$ is even and $B.R_n=R_{n+1}$ if $n$ is odd and $F.R_n=R_{n+1}$ if
$n$ is even, i.e., we have a ''thread"
$$
R_1\xrightarrow{B}R_2\xrightarrow{F}R_3\xrightarrow{B}R_4\xrightarrow{F}\ldots
$$
Define two derivations (of degree 0 and 1 respectively) $\phi_B,\phi_F$ from $\F(B,F)$ to R by,
$\phi_B(B)=R_1, \phi_B(F)=0$ and $\phi_F(B)=0, \phi_F(F)=R_2$ (it is a
wellknown fact
that a derivation on a free Lie algebra is defined uniquely by the 
definition on the generators). Then $\phi_B(BF)=\phi_F(BF)=0$ and
$\phi_B(BBF)=-R_3,\ \phi_F(BBF)=0,\ \phi_B(FFB)=0,\
\phi_F(FFB)=-R_4$. If $\deg(X)\ge3$ then 
\begin{align*}
\phi_B(FX)&=F\phi_B(X)\\
\phi_B(BX)&=\pm XR_1 + B\phi(X)=B\phi_B(X)\\
\phi_F(FX)&=\pm XR_2 -F\phi_F(X)=-F\phi(X)\\
\phi_F(BX)&= -B\phi_F(X)\\
\end{align*}
Hence $\phi_B,\phi_F$ are linear in degree $\ge3$. Let
$I=\ker(\phi_B)\cap\ker(\phi_F)$. Then $I$ is an ideal and, since $\phi_B(B_n)=R_n$ and
$\phi_F(F_n)=\pm R_{n+1}$, it is easy to see that $I$ in degree $\ge3$
is the linear span of all iterated Lie products which are not of the
form $B_n,F_n$. That $B_n$ and $F_n$ are linearly independent modulo
$I$ folows easily from the fact that they have different bidegree in
$\F(B,F)$ where $\bideg(B)=(1,0)$ and $\bideg(F)=(0,1)$.
\end{proof} 

\vspace{10pt}
Next, we will prove that $\suba(c,d',g)_{\ge3}$ is an ideal. To do this, we start with a lemma.
\begin{lm}\label{dd}
We have $(\{d^2,e^2\}\cup\suba(d,e)_{\ge3}).\suba(c,d',g)_{\ge3}=0$.
\end{lm}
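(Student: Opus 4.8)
The plan is to prove the stronger pair of statements $d^2 . x = e^2 . x = 0$ and $\suba(d,e)_{\ge 3}.x \subseteq \rad(\eta)$ with the $\rad$-part in fact vanishing, for every $x \in \suba(c,d',g)_{\ge 3}$, and then observe that since $\suba(d,e)_{\ge 3}$ already sends such $x$ into degree $\ge 6$ while $\rad(\eta)$ is abelian and everything in sight annihilates $\rad(\eta)$, the composite action is zero. Concretely I would first record that by Lemma \ref{cg} we have $c.z = g.z = 0$ for every $z \in \suba(d,e)_{\ge 3}$, and by Lemma \ref{threads} (applied to the threads starting at $c$, $c^2$, $ffd$, and also to $g$) the elements $b, c^2, bf, f^2$ annihilate all the $r_n, r_n'$. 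Then I would run an induction on $\deg(x)$, writing a general $x \in \suba(c,d',g)_{\ge 3}$ as an iterated Lie product in $c, d', g$ and pushing the action of $d^2$ and $e^2$ inward using the relations in $\eta$: the key identities are $d^2 = [b,e]$-type relations (i.e. $d^2 + [b,e]$, $e^2 + [d,f]$ are relations, $[c,d]=0$, $[e,g]=0$, $[f,g]=0$, etc.), together with $d' = d+e$ so that $d'^2 = d^2 + e^2 + [d,e]$ and the mixed terms can be controlled. The base cases ($x$ of length exactly $3$, i.e. $x \in \{cd'g, \text{and the other length-3 brackets in } c,d',g\}$) are a finite check using the explicit relations, exactly as in the base cases of Propositions \ref{de} and \ref{bf}.

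The main structural point driving the induction is that $\suba(c,d',g)$, $\suba(d,e)$ and $\suba(b,f)$ are the three pieces $J2$, $J11$, $J12$ from Section \ref{sec:irr}, which annihilate each other; so when I expand $d^2 . (w . x')$ for $w \in \{c, d', g\}$ via the Jacobi identity as $[d^2, w].x' \pm w.(d^2.x')$, the second term dies by induction, and the first term $[d^2,w].x'$ must be analyzed: $[d^2, c]$, $[d^2,g]$ and $[d^2, d'] = [d^2,e]$ all lie in $\suba(d,e)$ (or are zero), and acting by an element of $\suba(d,e)$ of degree $\ge 3$ on $x' \in \suba(c,d',g)$ lands in $\rad(\eta)$ by the orthogonality, after which a further $d^2$ or $e^2$ (which annihilates $\rad(\eta)$) finishes the job — but here I only need that the single action already gives something annihilated by $b, c^2, bf, f^2$, and then the degree count plus abelian-ness of $\rad(\eta)$ kills the composite. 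The same bracket-pushing handles $e^2 . x$.

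I would then close the lemma by noting: given $x \in \suba(c,d',g)_{\ge 3}$ and $z \in \suba(d,e)_{\ge 3}$, the element $z.x$ lies in $\rad(\eta)$ (this is the orthogonality/ideal statement whose degree-$\ge 3$ version is what Section \ref{sec:irr} established and what the surrounding propositions are assembling), hence $z.x$ is a linear combination of the $r_n, r_n'$; then applying the remaining action and using $\rad(\eta)$ abelian together with $d^2 r_n = e^2 r_n = 0$ etc. from Proposition \ref{rad} gives $(\{d^2,e^2\} \cup \suba(d,e)_{\ge 3}).x = 0$. The hardest part is the bracket bookkeeping in the induction step: keeping track of which iterated products in $c, d', g$ survive versus get absorbed, and verifying that every $[d^2, w]$ and $[e^2,w]$ for $w$ a generator genuinely stays inside $\suba(d,e)$ rather than leaking into a new summand — this is where the explicit relations $[c,d]=[e,g]=[f,g]=0$, $[b,g]=0$, $d^2=-[b,e]$, $e^2=-[d,f]$ must be used carefully, and where a finite {\tt liedim} check at low degree underpins the inductive pattern.
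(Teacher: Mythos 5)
Your plan correctly identifies the two sub-claims and the main tools (Lemma \ref{cg}, the derivation/Jacobi expansion, the reduction $d^2d'=d^2e$), but it misses the simple observation that makes the lemma go through, and the argument as written has a genuine gap at the crucial step. The paper disposes of the $\suba(d,e)_{\ge3}$ half with no induction at all: $c,g$ annihilate $\suba(d,e)_{\ge3}$ by Lemma \ref{cg}, $d'$ preserves it, and every nonzero Lie word of degree $\ge3$ in $c,d',g$ contains at least one $c$ or $g$, so $\suba(c,d',g)_{\ge3}$ annihilates $\suba(d,e)_{\ge3}$; by (super)symmetry of the bracket this is exactly $\suba(d,e)_{\ge3}.\suba(c,d',g)_{\ge3}=0$. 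You instead claim $z.x$ lands in $\rad(\eta)$ and then conclude it is zero ``using $\rad(\eta)$ abelian together with $d^2r_n=e^2r_n=0$ \ldots and degree count''; that inference does not hold --- membership in $\rad(\eta)$ does not force vanishing, and there is no ``further $d^2$ or $e^2$'' left to act on $z.x$. The same gap sits inside your $d^2$-induction: once $d^2.(d'x')$ is reduced to $(d^2e).x'$, you again say it ``lands in $\rad(\eta)$'' and that a ``further $d^2$ or $e^2$ \ldots finishes the job''; but nothing more is being applied, and you simply need $(d^2e).x'=0$, which is precisely the other half of the lemma --- the reason the paper establishes that half first, by the one-line argument above, before starting the induction.

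A smaller slip: $[c,d]=0$ and $[e,g]=0$ are not relations of $\eta$. The $\eta$-relations, from \eqref{eq25}, are $[b,c]$, $c^2+[b,d]$, $d^2+[b,e]$, $[c,e]$, $[c,f]$, $e^2+[d,f]$, $[b,g]$, $[e,f]+[c,g]$, $[d,g]$, $[f,g]$; in $\eta$ both $[c,d]$ and $[e,g]$ are nonzero (they are the elements $D_c$ and $G_e$ in the tables of section \ref{sec:mod}). It looks as though you conflated the quadratic relations of ${\cal S}$ (where indeed $cd=eg=0$) with their Koszul-dual orthogonal complement.
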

\begin{proof}
According to Lemma \ref{cg} we have
$c.\suba(d,e)_{\ge3}=g.\suba(d,e)_{\ge3}=0$. Also
$d'.\suba(d,e)_{\ge3}\subset\suba(d,e)_{\ge3}$. Since a non-zero Lie
expression in $\suba(c,d',g)_{\ge3}$ must contain at least one $c$ or
$g$, it follows that $\suba(c,d',g)_{\ge3}.\suba(d,e)_{\ge3}=0$. We
prove by induction that 
$d^2.\suba(c,d',g)_{\ge3}=e^2.\suba(c,d',g)_{\ge3}=0$. The induction
start is somewhat tedious but manageable. We have
$d^2c=d^2g=e^2c=e^2g=0$. Hence for the induction step we only have to
consider the cases $d^2.d'x$ and $e^2.d'x$. We have
\begin{align*}
d^2.d'x=(d^2d').x+d'.d^2.x=(d^2e).x=0\\
e^2.d'x=(e^2d').x+d'.e^2.x=(e^2d).x=0
\end{align*}
where we have used that $\suba(d,e)_{\ge3}.\suba(c,d',g)_{\ge3}=0$.
\end{proof}

\vspace{10pt}
The following lemma will be needed in the next section. In fact, a
stronger result is true, namely $b.\suba(c,d',g)_{\ge3}=0$, but we do not need this.

\begin{lm}\label{b}
We have that $b.\suba(c,d',g)_{\ge3}\subset\rad(\eta)$.
\end{lm}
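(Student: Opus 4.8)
The plan is to prove Lemma \ref{b} by induction on degree, closely mirroring the proof of Lemma \ref{dd}, and exploiting that $\rad(\eta)$ is already understood (Proposition \ref{rad}): it is abelian, spanned by $r_n=(d')^{n-2}c^2$ and $r_n'=(d')^{n-3}cd'g$, and the generators $b,c,d,e,f,g$ act on these basis elements in a completely explicit way. The target submodule $\suba(c,d',g)_{\ge3}$ is generated by iterated Lie products of $c$, $d'$ and $g$ of length $\ge3$, each of which contains at least one $c$ or one $g$.

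First I would verify the induction base: the elements of $\suba(c,d',g)$ of degree exactly $3$. Up to the commutator relations these are spanned by expressions like $cd'g$, $d'd'c$, $d'd'g$, and $(d')$-iterates thereof; one checks directly, using $bc=$ (the relation $c^2+[b,d]$ gives $bc\equiv\ldots$ inside $\eta$), $bg=0$, and the explicit formulas for the action of $b$ on $c^2$, $d^2$, etc., that $b$ applied to each degree-$3$ generator lands in the span of $r_3,r_3'$, i.e. in $\rad(\eta)$. For the induction step, a general element of $\suba(c,d',g)_{\ge4}$ can be written (modulo lower-length Lie identities and the commutator relations) as $d'.x$ or as $c.x$ or $g.x$ with $x\in\suba(c,d',g)_{\ge3}$, but since $cd^2=cd'\cdot\text{stuff}$ and $gd'$ reduce via the relations, the only genuinely new cases are $b.(d'x)$. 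Expand $b.(d'x)=[b,d'].x+d'.(b.x)$. The second term lies in $d'.\rad(\eta)\subset\rad(\eta)$ by the induction hypothesis and because $\rad(\eta)$ is a $d'$-stable ideal. For the first term, $[b,d']=[b,d]+[b,e]$; using the defining relations $[b,d]=-c^2$ and $d^2+[b,e]=0$ (so $[b,e]=-d^2$), we get $[b,d'].x=-c^2.x-d^2.x$. By Lemma \ref{dd}, $d^2$ annihilates $\suba(c,d',g)_{\ge3}$, so $d^2.x=0$; and $c^2\in\rad(\eta)$, so $c^2.x=\pm[x,c^2]$, which is zero when $x\in\suba(c,d',g)_{\ge3}$ again by Lemma \ref{dd} (or directly, since $c^2 r_m=c^2 r_m'=0$ from Proposition \ref{rad} and $c^2$ kills the rest). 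Hence $b.(d'x)\in\rad(\eta)$, completing the induction.

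The main obstacle I expect is bookkeeping in the base case and in the reduction "every element of $\suba(c,d',g)_{\ge4}$ is, modulo what we already control, of the form $d'.x$ with $x$ of length $\ge3$ in $\suba(c,d',g)$." One must be careful that the subalgebra is generated by $c$, $d'$ and $g$ but $d'=d+e$ is not a generator of $\eta$, so expanding $[b,d']$ and $[d',x]$ requires consistently rewriting in terms of $d$ and $e$ and invoking the relations $[b,d]=-c^2$, $[b,e]=-d^2$, $[c,e]=0$, $[d,g]=0$, $[f,g]=0$, $[f,e]+[c,g]=0$, etc. The cleanest way to handle this is to note that any iterated Lie product in $c,d',g$ of length $\ge4$ either already starts (on the left) with $d'$ — giving the case treated above — or starts with $c$ or $g$; in the latter case one moves the $c$ or $g$ inward using Lemma \ref{dd} and the Jacobi identity until either it meets another $c$ or $g$ (producing $c^2$ or $g^2=0$ or $cg$, all controlled: $c^2\in\rad(\eta)$, $g^2=0$ by the relation $[g,g]=0$ essentially, and $cg$ reappears via $fd'=e^2+cg$) or it lands on something in $\suba(d,e)_{\ge3}$, which is annihilated by $c$ and $g$ by Lemma \ref{cg}. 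Once all these cases are seen to reduce to the $d'.x$ case, the argument closes. I would keep the write-up short by proving the key identity $b.\suba(c,d',g)_{\ge3}\subset\rad(\eta)$ in the two-case form ($d'$-iterates vs.\ the rest) and citing Lemmas \ref{dd} and \ref{cg} for the vanishing, rather than grinding through every monomial.
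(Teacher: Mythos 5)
Your inductive strategy, built around the identity $b.(d'x)=[b,d'].x+d'.(b.x)$ with $[b,d']=-c^2-d^2$ and an appeal to Lemma~\ref{dd}, is exactly what the paper does; the approach is the same. There are, however, two inaccuracies in the justification that you should repair.

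First, you claim $c^2.x=0$ for $x\in\suba(c,d',g)_{\ge3}$ and cite Lemma~\ref{dd}, but Lemma~\ref{dd} only asserts that $d^2$, $e^2$ and $\suba(d,e)_{\ge3}$ annihilate $\suba(c,d',g)_{\ge3}$; $c^2$ is not covered by it, nor does Proposition~\ref{rad} (which concerns $c^2.r_m$) give it. The claim $c^2.x=0$ may in fact be true, but the argument you offer does not establish it. Fortunately you do not need it: since $c^2\in\rad(\eta)$ and $\rad(\eta)$ is an ideal, you already have $c^2.x\in\rad(\eta)$, which is all the conclusion requires (and this is precisely how the paper uses ``$c^2\in\rad(\eta)$''). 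Second, your treatment of iterated brackets that start with $c$ or $g$ is over-engineered and also slightly confused: you appear to attribute a value for $[b,c]$ to the relation $c^2+[b,d]$, which actually gives $[b,d]=-c^2$; the relevant relation is $[B,C]$, which gives $[b,c]=0$, alongside $[b,g]=0$. With $b.c=b.g=0$ in hand, the derivation property plus the induction hypothesis (that $b.y\in\rad(\eta)$ for $y$ of lower degree) and the fact that $\rad(\eta)$ is an ideal dispose of the $c$- and $g$-starting cases immediately, without any need to push $c$ or $g$ inward via Jacobi. With these two corrections your write-up matches the paper's proof.
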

\begin{proof} Proof by induction. The induction start consists of
checking 7 equalities. We have $b.c=b.g=0$ and $b.d'=-c^2-d^2$. The
result follows from Lemma \ref{dd} and since $c^2\in\rad(\eta)$. 
\end{proof}
\begin{prp}\label{cd'g}
We have that $\suba(c,d',g)_{\ge3}$ is an ideal in $\eta$ and
$\suba(b,f)_{\ge3}.\suba(c,d',g)_{\ge3}=0$.
\end{prp}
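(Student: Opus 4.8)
The plan is to prove the two assertions of the proposition separately; throughout, write $W=\suba(c,d',g)_{\ge3}$.

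First I would show that $W$ is an ideal. Since $\eta$ is generated in degree $1$ and $\ad$ is a Lie homomorphism, the linear endomorphisms of $\eta$ carrying $W$ into $W$ form a Lie subalgebra, so it suffices to check $x.W\subseteq W$ for each $x\in\{b,c,d,e,f,g\}$. For $c,g$ (and $d'$) this is immediate since they lie in $\suba(c,d',g)$; for $b$ it follows from Lemma \ref{b} together with Proposition \ref{rad}, whose spanning elements $(d')^{n-2}c^2$, $(d')^{n-3}cd'g$ of $\rad(\eta)$ lie in $\suba(c,d',g)$, so that $b.W\subseteq\rad(\eta)\cap\eta_{\ge4}\subseteq W$. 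For $d$ I would use the relations of $\eta$ to rewrite $d.c=[d,c]=[c,d']$ (using $[c,e]=0$), $d.g=0$, and $d.d'=2d^2+[d,e]=d^2+d'^2-e^2$, and then prove $d.W\subseteq W$ by induction on degree: in $d.[y,v']$ with $y\in\{c,d',g\}$, the terms built from $[c,d']$, from further brackets inside $\suba(c,d',g)$, or from $d.v'$ (in $W$ by induction) stay in $\suba(c,d',g)$, while for $y=d'$ one has $d^2.v'=e^2.v'=0$ by Lemma \ref{dd} and $d'^2.v'=d'.(d'.v')\in W$; the degree-$3$ case is a finite verification. Then $e.W=(d'-d).W\subseteq W$, and $f$ is handled by the same induction using $f.c=f.g=0$ and $f.d'=[d,f]+[e,f]=-e^2-[c,g]$, with $e^2.v'=0$ by Lemma \ref{dd} and $[c,g].v'\in\suba(c,d',g)$.

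For the second assertion the essential input is the claim $[\rad(\eta),W]=0$. To prove it, view $\rad(\eta)$ as a module over $\suba(c,d',g)$ via $\ad$; by Proposition \ref{rad}, $c$ and $g$ annihilate $\rad(\eta)$, so the ideal $I$ of $\suba(c,d',g)$ generated by $c$ and $g$ also does (the annihilator of a module being an ideal), and the action factors through $\suba(c,d',g)/I$. But $\suba(c,d',g)/I$ is generated by the single odd class $\bar d'$, hence is spanned by $\bar d'$ and $(\bar d')^2$ and is concentrated in degrees $\le2$, so $\suba(c,d',g)_{\ge3}$ maps to zero there and annihilates $\rad(\eta)$. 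Granting the claim, I would prove $[u,v]=0$ for $u\in\suba(b,f)_{\ge3}$, $v\in W$ by induction on $\deg u$: for $\deg u\ge4$ write $u=[y,u']$ with $y\in\{b,f\}$, $u'\in\suba(b,f)_{\ge3}$, so $[u,v]=[y,[u',v]]\pm[u',[y,v]]$, where $[u',v]=0$ by induction and $[y,v]\in W$ by the first assertion, hence $[u',[y,v]]=0$ by induction; the base case $\deg u=3$, where $\suba(b,f)_3$ is spanned by $b_3,f_3$ of Proposition \ref{bf}, follows by a sub-induction on $\deg v$: for $v=[y,v_1]$ with $y\in\{c,d',g\}$ one has $[b_3,c]=[b_3,g]=0$ and $[b_3,d']=\pm d'.b_3\in\rad(\eta)$ by Proposition \ref{bf}, so $[b_3,[y,v_1]]=[[b_3,y],v_1]\pm[y,[b_3,v_1]]$ vanishes by the claim (when $\deg v_1\ge3$) or by a direct computation (when $\deg v=3$), and likewise for $f_3$.

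The main obstacle is the claim $[\rad(\eta),W]=0$: it is exactly the triviality of the $\suba(c,d',g)_{\ge3}$-action on the radical that makes the Jacobi reductions in the second assertion collapse. Apart from that, the low-degree base cases — the degree-$3$ computations, which are the facts already recorded via {\tt liedim} in Section \ref{sec:irr} (e.g. {\tt ann[J2,3,3]}$=${\tt J11}$\cup${\tt J12}) — must be checked directly.
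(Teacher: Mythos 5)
Your proof is correct, and on the first assertion it follows essentially the same route as the paper (treat $b$ via Lemma \ref{b}, and handle $d,e,f$ by induction using the relations $f.d'=-e^2-[c,g]$, $d.d'=(d')^2+d^2-e^2$, etc., together with Lemma \ref{dd}). The real divergence is in how you treat the second assertion, and there you take a longer detour than necessary. Your key observation — that $\rad(\eta)$ is a $\suba(c,d',g)$-module annihilated by $c$ and $g$, that the annihilator of a module is an ideal, and that the quotient $\suba(c,d',g)/(c,g)$ is the free Lie superalgebra on the odd class $\bar d'$ and hence vanishes in degrees $\ge3$ — is exactly the engine that the paper uses, and your formulation of it via the annihilator-is-an-ideal principle is cleaner than the paper's somewhat terse allusion (``a non-zero Lie expression in $\suba(c,d',g)_{\ge3}$ must contain at least one $c$ or $g$''). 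But the paper applies this engine directly to the larger ideal $\suba(b,f)_{\ge3}+\rad(\eta)$: by Proposition \ref{bf} this is an ideal in $\eta$, and it is annihilated by $c$ and $g$ (from $c.J=g.J=0$ in the proof of \ref{bf}, plus Proposition \ref{rad}), so exactly your quotient argument immediately gives $\suba(c,d',g)_{\ge3}.(\suba(b,f)_{\ge3}+\rad(\eta))=0$, with no further induction. You instead apply the argument only to $\rad(\eta)$ and then bootstrap to $\suba(b,f)_{\ge3}$ by a double induction on $\deg u$ and $\deg v$; this works, but it forces you to carry extra degree-$3$ base cases (the $[b_3,v]$, $[f_3,v]$ computations) that the paper's more direct application avoids. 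So: same key lemma, but you stopped short of applying it in full strength — you could delete the whole Jacobi induction by replacing ``view $\rad(\eta)$ as a module over $\suba(c,d',g)$'' with ``view $\suba(b,f)_{\ge3}+\rad(\eta)$ as a module over $\suba(c,d',g)$.''
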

\begin{proof}
By Proposition \ref{bf}, $\suba(b,f)_{\ge3}+\rad(\eta)$ is an
ideal. Since it is annihilated by $c$ and $g$, it follows in the same
way as in the proof of the previous lemma that
$\suba(c,d',g)_{\ge3}.\suba(b,f)_{\ge3}=0$. By Lemma \ref{b}, $b.x\in\rad(\eta)_{\ge3}\subset
\suba(c,d',g)_{\ge3}$ for $x\in 
\suba(c,d',g)_{\ge3}$. We now prove by induction
that $f.x\in \suba(c,d',g)_{\ge3}$ for $x\in
\suba(c,d',g)_{\ge3}$. Again the induction start consists of checking
a number of equalities. We have $fc=fg=0$ so the
only case to consider is $f.d'x$. But 
$fd'=-e^2-cg$. The claim now follows by Lemma \ref{dd}. Finally we
prove by induction that $d.x, e.x\in \suba(c,d',g)_{\ge3}$ for $x\in
\suba(c,d',g)_{\ge3}$. But $dc=d'c$, $dg=0$, $ec=0$ and
$eg=d'g$. Thus we only have to consider $d.d'x$ and $e.d'x$. But
$dd'=2d^2+de=(d')^2+d^2-e^2$ and $ed'=2e^2+de=(d')^2-d^2+e^2$. The
result now follows by Lemma \ref{dd}. 
 
\end{proof}

\begin{prp}\label{sum}
For each $n\ge3$ we have $\eta_n=\suba(c,d',g)_n+\suba(d,e)_n+\suba(b,f)_n$.
\end{prp}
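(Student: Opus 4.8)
The plan is to show that every iterated Lie product of length $n \ge 3$ in $\eta$ lies in the sum $\suba(c,d',g)_n+\suba(d,e)_n+\suba(b,f)_n$, proceeding by induction on $n$ and using the six generators $b,c,d,e,f,g$. Since $\eta_n$ is spanned by elements of the form $x_1.x_2.\cdots.x_{n-1}.x_n$ with each $x_i$ a generator, and since by the inductive hypothesis the tail $x_2.\cdots.x_n$ already lies in the claimed sum, it suffices to check that for each generator $v \in \{b,c,d,e,f,g\}$ and each of the three subalgebras, $v.(\text{element of that subalgebra in degree} \ge 3)$ lands back in the sum $\suba(c,d',g)+\suba(d,e)+\suba(b,f)$ (in degrees $\ge 3$).

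But this is almost entirely supplied by the propositions already proved. Proposition \ref{de} gives that $\suba(d,e)_{\ge3}+\rad(\eta)$ is an ideal, so $v.\suba(d,e)_{\ge3} \subset \suba(d,e)_{\ge3}+\rad(\eta)$, and since $\rad(\eta)_{\ge3} \subset \suba(c,d',g)_{\ge3}$ (by Proposition \ref{rad}, $\rad(\eta)$ is spanned by $(d')^{n-2}c^2$ and $(d')^{n-3}cd'g$, both manifestly in $\suba(c,d',g)$), the action of any generator on $\suba(d,e)_{\ge3}$ stays in the sum. Likewise Proposition \ref{bf} shows $\suba(b,f)_{\ge3}+\rad(\eta)$ is an ideal, so $v.\suba(b,f)_{\ge3}$ stays in the sum for the same reason. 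And Proposition \ref{cd'g} shows outright that $\suba(c,d',g)_{\ge3}$ is an ideal, so $v.\suba(c,d',g)_{\ge3} \subset \suba(c,d',g)_{\ge3}$. Hence the span of all length-$n$ iterated products is contained in the three-term sum, and conversely the sum is obviously contained in $\eta_n$.

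The only genuine point to address beyond citing these three propositions is the \emph{base case}: one must verify directly (by hand, or with \textbf{liedim}) that $\eta_3 = \suba(c,d',g)_3+\suba(d,e)_3+\suba(b,f)_3$. This is the computation already alluded to in Section \ref{sec:irr}, where the nine degree-$3$ basis elements of $\bar\eta$ were split into $J11 \cup J12 \cup J2$; lifting back to $\eta$ one has eleven basis elements in $\eta_3$, two of which span $\rad(\eta)_3 \subset \suba(c,d',g)_3$, and the remaining nine are distributed among the three subalgebras exactly as in that splitting. I expect this bookkeeping in degree $3$ — matching the explicit \texttt{modbas} elements of $\eta_3$ against spanning sets for $\suba(c,d',g)_3$, $\suba(d,e)_3$, $\suba(b,f)_3$ — to be the main (though entirely routine) obstacle; everything in degrees $\ge 4$ is a formal consequence of the ideal properties established above, together with the inductive hypothesis that $\eta_{n-1}$ already decomposes.
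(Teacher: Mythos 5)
Your proof is correct and follows the same route as the paper: the base case $n=3$ by direct computation, then induction using Propositions \ref{de}, \ref{bf} and \ref{cd'g} together with the inclusion $\rad(\eta)\subset\suba(c,d',g)$. The paper derives this last inclusion from the ideal property in Proposition \ref{cd'g} rather than by inspecting the explicit basis of $\rad(\eta)$ as you do, but this is a cosmetic difference and both justifications are valid.
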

\begin{proof}
The claim is true for $n=3$ by direct calculation. Suppose it is true
for all $i<n$. Since $\rad(\eta)$ is generated as an ideal by $c^2$ and $cd'g$ it
follows from Proposition \ref{cd'g} that
$\rad(\eta)\subset\suba(c,d',g)$. We have
$\eta_1.\suba(c,d',g)_{n-1}\subset\suba(c,d',g)_n$. By Proposition
 \ref{de}, \ref{bf} we also have
$\eta_1.\suba(d,e)_{n-1}\subset\suba(d,e)_n+\rad(\eta)$ and 
$\eta_1.\suba(b,f)_{n-1}\subset\suba(b,f)_n+\rad(\eta)$. 
Since $\eta_n=\eta_1.\eta_{n-1}$, the claim now follows.
\end{proof}

\subsection{A module over $\bar\eta$} \label{sec:mod}
Our goal in this section is to prove that $\bar\eta=\eta/\rad(\eta)$
is isomorphic in degree $\ge3$ to a direct sum of three Lie algebras,
which are $\F(C,D',G)/<C^2,[C,[D',G]]>$, $\F(D,E)$ and $\F(B,F)$ all taken
in degree $\ge3$, where e.g., $\F(D,E)$ means the free Lie algebra on
$D,E$.  To do this, we will form the direct sum and make it
to an $\bar\eta$-module and map it surjectively to $\bar\eta$. Then we
will prove three theorems which is an application of Theorem 5.3 in \cite{Lo-Ro} for
our situation.

The definition of the module structure will be done by defining the
action of the generators as derivations and then checking that the
relations in $\bar\eta$ act as zero. We will use the wellknown fact
that a derivation on a free Lie algebra is defined uniquely by 
defining the derivation on the generators and these definitions are
arbitrary. 

Here follows the definition of the module $M$ as a vector
space. Observe that the elements $B_2, D_c,\ldots$ are just symbols and
has nothing to do with the elements in $\bar\eta$, but we will soon
define a map to $\bar\eta$ such that e.g., $B_2$ is mapped to $b^2$
and $D_c$ is mapped to $[d,c]$ modulo $\rad(\eta)$.

\begin{dfn}\label{dfn}
\begin{align*}
M_1&=\spa(B_1,C_1,D_1,E_1,F_1,G_1)\\ 
M_2&=\spa(B_2,D_c,D_2,E_d,B_F,E_2,F_2,G_c,G_e,G_2)\\
M_{\ge3}&= \bigl(\F(C,D',G)/<C^2,[C,[D',G]]>\oplus\F(D,E)\oplus\F(B,F)\bigr)_{\ge3}
\end{align*}
\end{dfn}
The next step is to define an action of $\F(b,c,d,e,f,g)$ on $M$. This
is done by defining the action of each generator, explicit on
$M_1,M_2$ and also explicit on the elements $\{C,D',G,D,E,B,F\}$ and
then checking that the extended derivation maps the ideal
$<C^2,[C,[D',G]]>$ to itself.

\vspace{10pt} 
$$
\begin{array}{c|c|c|c|c|c|c|}
&B_1&C_1&D_1&E_1&F_1&G_1\\
\hline
b&2B_2&0&0&-D_2&B_F&0\\
\hline
c&0&0&D_c&0&0&G_c\\
\hline
d&0&D_c&2D_2&E_d&-E_2&0\\
\hline
e&-D_2&0&E_d&2E_2&-G_c&G_e\\
\hline
f&B_f&0&-E_2&-G_c&2F_2&0\\
\hline
g&0&G_c&0&G_e&0&2G_2\\
\hline
\end{array}
$$

\vspace{10pt}
Im the following table we use the notation (for use in the next section) $D_3=[E,D^2]$,
x$E_3=[D,E^2]$, $B_3=[F,B^2]$, $F_3=[B,F^2]$, $C_3=D'D'C$ and
$G_3=D'D'G$.

\vspace{10pt}
$$
\begin{array}{c|c|c|c|c|c|c|c|c|c|c|}
&B_2&D_c&D_2&E_d&B_f&E_2&F_2&G_c&G_e&G_2\\
\hline
b&0&0&0&0&-B_3&D_3&F_3&0&0&0\\
\hline
c&0&0&0&-C_3&0&0&0&0&0&-GGC\\
\hline
d&0&0&0&-D_3&D_3&E_3&0&D'CG&G_3&0\\
\hline
e&0&C_3&D_3&-E_3&E_3&0&0&0&0&-GGD'\\
\hline
f&F_3&0&E_3&D'CG&-F_3&0&0&0&GGC&0\\
\hline
g&0&GD'C&0&-G_3&0&0&0&GGC&GGD'&0\\
\hline
\end{array}
$$

\vspace{10pt} The next table shows how the generators $b,c,d,e,f,g$
operate on the generators $C,D',G,D,E,B,F$ defining $M_{\ge3}$.
$$
\begin{array}{c|c|c|c|c|c|c|c|}
&C&D'&G&D&E&B&F\\
\hline
b&0&0&0&0&-D^2&2B^2&BF\\
\hline
c&0&CD'&CG&0&0&0&0\\
\hline
d&CD'&(D')^2&0&2D^2&DE&0&0\\
\hline
e&0&(D')^2&D'G&DE&2E^2&0&0\\
\hline
f&0&-CG&0&-E^2&0&BF&2F^2\\
\hline
g&CG&D'G&2G^2&0&0&0&0\\
\hline
\end{array}
$$

\vspace{10pt}
We now check that the generators map the ideal $I=<C^2, [C,[D',G]]>$ to
itself. We have
\begin{align*}c.C^2&=0,\ 
c.[C,[D',G]]=-[C,[[C,D'],G]]+[C,[D',[C,G]]]=-[C,[C,[D',G]]]\in I\\
d.C^2&=[[D',C],C]\in I\\
d.[C,[D',G]]&=[[D',C],[D',G]]-[C,[(D')^2,G]]=[[C,[D',G]],D']\in I\\
e.C^2&=0,\ e.[C,[D',G]]=-[C,[(D')^2,G]]+[C,[D',[D',G]]]=0\\
f.C^2&=0,\ f.[C,[D',G]]=-[C,[(D')^2,G]]+[C,[D',[D',G]]]=0\\
g.C^2&=[[C,G],C]\in I\\
g.[C,[D',G]]&=[[C,G],[D',G]]-[C,[[D',G],G]]+[C,[D',[G,G]]]=[[C,[D',G]],G]\in
I
\end{align*}
Now $M$ is a module over the free Lie algebra on $b,c,d,e,f,g$. The
next step is to show that all the relations in $\bar\eta$ are
operating as zero in all degrees. In degree $\le2$ this consists of checking a finite
number of equalities. We will do this just in one case, $(e^2+df).E_d$,
\begin{align*}(e^2+df).E_d&=e.EED+d.D'CG+f.DDE\\
&=(e.E^2)D+EEED+D'D'CG-D'(CD')G-(E^2D)E=D'CD'G=0
\end{align*}
We now prove that the relations operate as zero in
degrees $\ge3$. Since the generators act as derivations in degree
$\ge3$ and since a Lie
product of a derivation is again a derivation, it is enough to check
that the relations act as zero on the generators $C,D',G,D,E,B,F$ and
this consists also of checking a finite number of equalities. We will anyway present the
computations.  

We start with the operation on the first Lie algebra
generated by $C,D',G$. Any
monomial containing a $b$ will operate as zero. Also $c,g$ operate
naturally as $\ad_C,\ad_G$ and $d+e$ operates as $\ad_{D'}$. Moreover
it is easy to see that $c^2,d^2,e^2,f^2$ operate as zero,
e.g. $e^2.G=e.D'G=D'D'G-D'D'G=0$. 
\begin{align*}(ce).C&=0,\ (ce).D'=C(D')^2+e.CD'=0,\\
(ce).G&=c.D'G+e.CG=CD'G-CD'G=0\\
(cf).C&=0,\ (cf).D'=-CCG+f.CD'=-CCG-CCG=0,\ (cf).G=f.CG=0\\
(df).C&=f.D'C=-(CG)C=0,\\
(df).D'&=-d.CG+f.(D')^2=-(CD')G-(CG)D'=\\
&=-CD'G+(CG)D'-(CG)D'=0,\ (df).G=0\\
(ef+cg).C&=c.GC=CCG=0,\ (ef+cg).D'=-e.CG+f.(D')^2+(CG).D'=\\
&=CD'G-(CG)D'+(CG)D'=0,\\ 
(ef+cg).G&=f.D'G+(CG)G=-(CG)G+(CG)G=0\\
(dg).C&=d.GC+g.CD'=-GCD'+GCD'=0,\\
(dg).D'&=d.GD'+g.(D')^2=-G(D')^2+G(D')^2=0,\
(dg).G=d.GG=0\\
(fg).C&=f.GC=0,\ (fg).D'=f.GD'+g.(D')^2=-G(D')^2+G(D')^2=0,\\
(fg).G&=f.GG=0\\
(ffd).C&=(ffd).D'=(ffd).G=0\quad\text{since $f^2$ operates as zero}
\end{align*}
We continue with the operation of the relations on $\F(D,E)$. We will use
the fact that $c,g$ operate as zero and $d,e$ operate naturally as $\ad_D,\ \ad_E$. 
\begin{align*}
(bd).D&=b.DD=0,\ (bd).E=b.DE-d.D^2=-DD^2-DD^2=0\\
(d^2+be).D&=d.DD+b.ED=DDD+DDD=0\\
(d^2+be).E&=DDE+b.EE-e.D^2=DDE-2DDE+DDE=0\\
(e^2+df).D&=EED-d.E^2+f.DD=EED+EED-2EED=0,\\
(e^2+df).E&=f.DE=-EEE=0\\
(ef).D&=-e.E^2+f.ED=-EE^2=0,\ (ef).E=f.EE=0\\
(ffd).D&=f.f.DD=-2f.EED=-2EEE^2=0,\ (ffd).E=f.f.DE=-f.EEE=0
\end{align*}
Finally we study the operations on $\F(B,F)$. Here $c,d,e,g$ operate as
zero so there is nothing to prove.

\vspace{10pt} Define a map $\phi_1$, by $\phi_1(C)=c,\
\phi_1(D')=d+e,\ \phi_1(G)=g$. Since $c^2=c(d+e)g=0$ in $\bar\eta$,
this induces a Lie algebra map 
$L_1=\F(C,D',G)/<C^2,[C,[D',G]]>\to \bar\eta$. Similarly Lie algebra
maps $\phi_2:\ L_2=\F(D,E)\to \bar\eta$ and $\phi_3:\ L_3=\F(B,F)\to\bar\eta$
are defined by $\phi_2(D)=d,\
\phi_2(E)=e$ and $\phi_3(B)=b,\ \phi(F)=f$. By the construction above
we have that $L_1,L_2,L_3$ are $\bar\eta$-module Lie algebras, but
$\phi_i$ is \emph{not} an $\bar\eta$-module homomorphism,
$i=1,2,3$, (where we consider $\bar\eta$ as a module over itself via
the $\ad$-representation), e.g., $0=\phi_2(c.D)\ne c\phi(D)=cd$.  

Define a map $\phi: \ M\to\bar\eta$ by
$\phi(B_1)=b,\ \phi(C_1)=c,\ \phi(D_1)=d,\ \phi(E_1)=e,\ \phi(F_1)=f,\
\phi(G_1)=g$
and
\begin{align*}
\phi(B_2)&=b^2,\ \phi(D_c)=[c,d],\ \phi(D_2)=d^2,\ \phi(E_d)=[d,e],\
\phi(B_f)=[b,f]\\
\phi(E_2)&=e^2,\ \phi(F_2)=f^2,\ \phi(G_c)=[c,g],\ \phi(G_e)=[e,g],\
\phi(G_2)=g^2
\end{align*}
and in degree $\ge3$, $\phi$ is defined as
$\phi_1+\phi_2+\phi_3$. Then $\phi$ is surjective in degree $\le2$ by
inspection and also in degree $\ge3$ by Proposition \ref{sum}. To
prove that $\phi$ is an $\bar\eta$-module homomorphism we need the
following lemma.
\begin{lm}\label{ho}
Suppose $X\in (L_i)_{\ge3}$ and suppose $H=C$ or $H=D'$ or $H=G$ if
$i=1$ and $H=D$ or $H=E$ if $i=2$ and $H=B$ or $H=F$ if $i=3$. Suppose
$a\in\{b,c,d,e,f,g\}$. Then
$$
(\phi(a.H)-a\phi(H))\phi(X)=0
$$
\end{lm}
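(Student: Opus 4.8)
The plan is to verify the identity $(\phi(a.H)-a\phi(H))\phi(X)=0$ by reducing it to the already-established orthogonality relations among the three subalgebras. First I would observe that the ``error term'' $\phi(a.H)-a\phi(H)$ is a very explicit element, small in degree: it lives in degree $2$, and running down the tables that define the action on $C,D',G,D,E,B,F$ together with the definition of $\phi_i$, one sees it always lands in the span of the degree-$2$ symbols that $\phi$ sends to brackets \emph{spanning the other two subalgebras} or into $\rad(\eta)$. Concretely, for $i=1$ (so $H\in\{C,D',G\}$) the only generators $a$ producing a nonzero error are $a=b$ (since $b.H=0$ while $b\phi(H)$ need not be zero) and, where the table entry for $a.H$ differs from $\ad$, a term in $\rad(\eta)$; in all cases $\phi(a.H)-a\phi(H)$ is a linear combination of $b^2,[b,f],f^2$ and elements of $\rad(\eta)$. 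Symmetrically, for $i=2$ the error lies in $\mathrm{subalg}(b,f)_2+\mathrm{subalg}(c,d',g)_2+\rad(\eta)$, and for $i=3$ in $\mathrm{subalg}(c,d',g)_2+\mathrm{subalg}(d,e)_2+\rad(\eta)$.

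Next I would invoke the orthogonality results already proved: Proposition \ref{cd'g} gives $\mathrm{subalg}(b,f)_{\ge3}\cdot\mathrm{subalg}(c,d',g)_{\ge3}=0$, Proposition \ref{bf} gives $\mathrm{subalg}(d,e)\cdot\mathrm{subalg}(b,f)_{\ge3}\subset\rad(\eta)$ with explicit values, Lemma \ref{dd} gives $(\{d^2,e^2\}\cup\mathrm{subalg}(d,e)_{\ge3})\cdot\mathrm{subalg}(c,d',g)_{\ge3}=0$, and Proposition \ref{rad} says $\rad(\eta)$ is abelian. Since $X\in(L_i)_{\ge3}$ and $\phi(X)=\phi_i(X)$ lands in the $i$-th subalgebra of $\eta$ (or rather of $\bar\eta$, but for the computation we may lift to $\eta$ modulo $\rad(\eta)$), the product of the error term — which is built from generators or brackets of the \emph{other} two subalgebras — with $\phi(X)$ is forced to vanish: a degree-$2$ element of $\mathrm{subalg}(b,f)$ acting on $\mathrm{subalg}(c,d',g)_{\ge3}$ is zero, a degree-$2$ element of $\mathrm{subalg}(d,e)$ or $\{d^2,e^2\}$ acting on $\mathrm{subalg}(c,d',g)_{\ge3}$ is zero, and the only surviving interactions land in $\rad(\eta)$, on which $\rad(\eta)$ itself acts trivially. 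One must handle the few cross-cases (e.g. a $\rad(\eta)$ summand of the error acting on $\phi_i(X)$) using that $\rad(\eta)\subset\mathrm{subalg}(c,d',g)$ and the same propositions.

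I would organize the verification in three blocks, one per $i$, each block further split by which generator $a$ gives a nonzero error (a short finite list), and in each sub-case cite the relevant proposition to kill the product. The routine part is reading off $\phi(a.H)-a\phi(H)$ from the three tables; I will not grind through all $6\times7$ entries but will note the pattern that the error for the $i$-th block always lies in the sum of the \emph{other two} subalgebras' degree-$2$ pieces plus $\rad(\eta)$. The main obstacle I anticipate is bookkeeping: making sure the degree-$2$ symbols $B_f,D_c,G_e,\dots$ are correctly identified with brackets in the appropriate subalgebra (e.g. $[b,f]\in\mathrm{subalg}(b,f)$, $[c,d]=[c,d']\in\mathrm{subalg}(c,d',g)$ since $[c,e]=0$ in $\bar\eta$), and checking that no error term secretly has a component in the \emph{same} subalgebra as $\phi(X)$ — for that I rely on the explicit table entries showing $a.H$ agrees with the $\ad$-action within $L_i$ exactly when $a$ is one of the generators of the $i$-th subalgebra (and $d,e$ both act as $\ad_{D'}$ on $L_1$), so the discrepancy is always ``transverse.''
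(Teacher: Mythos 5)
Your approach matches the paper's: one enumerates the finitely many pairs $(a,H)$ for which $\phi(a.H)\ne a\phi(H)$ (namely $b.D',\,d.D',\,e.D',\,f.D',\,c.D,\,f.E,\,g.E,\,d.F,\,e.B,\,e.F$), reduces each error $\phi(a.H)-a\phi(H)$ modulo the defining relations of $\bar\eta$ to a small degree-$2$ element, and then kills its action on $\phi(X)$ by invoking Lemma~\ref{dd}, Lemma~\ref{cg}, or Proposition~\ref{bf} according to $i$. One slip in your description worth flagging: for $i=1$ the generators $d,e,f$ (not only $b$) give a nonzero error on $D'$, and the resulting errors simplify to $d^2$, $\pm(e^2-d^2)$, and $e^2$, which do \emph{not} lie in $\mathrm{span}(b^2,[b,f],f^2)+\rad(\eta)$ as you claim; this is harmless for your argument only because Lemma~\ref{dd} explicitly includes $\{d^2,e^2\}$ in the set annihilating $\suba(c,d',g)_{\ge3}$, and you do cite that lemma.
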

\begin{proof}
If $\phi(a.H)=a\phi(H)$ there is nothing to prove. By looking in the table
above, we are hence left with the following combinations 
$b.D',d.D',e.D',f.D',c.D,f.E,g.E,d.F,e.B,e.F$.
\begin{align*}
&X\in (L_1)_{\ge3}:\\
&(\phi(b.D')-b(d+e))\phi(X)=-be\phi(X)=d^2\phi(X)=0,\text{ by Lemma \ref{dd}}\\
&(\phi(d.D')-d(d+e))\phi(X)=((d+e)^2-2d^2-de)\phi(X)=(e^2-d^2)\phi(X)=0,\text{ by Lemma \ref{dd}}\\
&(\phi(e.D')-e(d+e))\phi(X)=((d+e)^2-2e^2-de)\phi(X)=(d^2-e^2)\phi(X),\text{ by Lemma \ref{dd}}\\
&(\phi(f.D')-f(d+e))\phi(X)=(-cg+e^2-ef)\phi(X)=e^2\phi(X)=0,\text{ by
Lemma \ref{dd}}\\
&X\in (L_2)_{\ge3}:\\
&(\phi(c.D)-cd)\phi(X)=-cd\phi(X)=0,\text{ by Lemma \ref{cg}}\\
&(\phi(f.E)-fe)\phi(X)=-fe\phi(X)=cg\phi(X)=0,\text{ by Lemma
\ref{cg}}\\
&(\phi(g.E)-ge)\phi(X)=-eg\phi(X)=0,\text{ by Lemma \ref{cg}}\\
&X\in (L_3)_{\ge3}:\\
&(\phi(d.F)-df)\phi(X)=-df\phi(X)=0,\text{ by Proposition \ref{bf}}\\
&(\phi(e.B)-eb)\phi(X)=-eb\phi(X)=0,\text{ by Proposition \ref{bf}}\\
&(\phi(e.F)-ef)\phi(X)=-ef\phi(X)=0,\text{ by Proposition \ref{bf}}\\
\end{align*}

\end{proof}

\begin{prp}\label{homo}
The map $\phi: M\to \bar\eta$ is an $\bar\eta$-module homomorphism
\end{prp}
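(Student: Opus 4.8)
The plan is to verify $\phi(a\cdot m)=a\cdot\phi(m)$ for $a$ one of the six generators $b,c,d,e,f,g$ of $\bar\eta$ and $m$ running through a spanning set of $M$. This suffices: the set of $a\in\bar\eta$ for which $\phi(a\cdot m)=a\cdot\phi(m)$ holds for every $m$ is a linear subspace, and it is closed under the bracket because on both sides of $\phi$ one may rewrite $[a,a']\cdot m$ by means of the module axiom $[a,a']\cdot(-)=a\cdot(a'\cdot(-))\pm a'\cdot(a\cdot(-))$; hence it is a Lie subalgebra, and it equals $\bar\eta$ once it contains the generators. By linearity I may treat $M_1$, $M_2$ and the three summands $(L_i)_{\ge3}$ of $M_{\ge3}$ one at a time. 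In degrees $\le2$ the check is finite and mechanical: $a\cdot m$ is read off from the first table (for $m\in M_1$) or the second table (for $m\in M_2$), $\phi$ of the result is the indicated element of $\bar\eta$, and $a\cdot\phi(m)=[a,\phi(m)]$, so each resulting equality is an identity among low-degree brackets that follows from the defining relations of $\bar\eta$ --- for instance $\phi(b\cdot E_1)=\phi(-D_2)=-d^2=[b,e]=b\cdot\phi(E_1)$ uses the relation $d^2+[b,e]=0$. I would simply tabulate these.

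For degrees $\ge3$, fix $i$ and recall that $\phi$ restricted to $L_i$ coincides with the Lie algebra homomorphism $\phi_i:L_i\to\bar\eta$ (well defined because $c^2$ and $cd'g$ vanish in $\bar\eta$), so $\phi$ is multiplicative for brackets carried out inside $L_i$. I would induct on $n=\deg X$ for $X\in(L_i)_{\ge3}$, the base case $n=3$ being again a finite verification. For $n\ge4$, using that the generators act as derivations in degree $\ge3$, reduce by linearity to $X=[H,Y]$ with $H$ one of the generators $C,D',G,D,E,B,F$ and $Y\in(L_i)_{n-1}$; since every entry $a\cdot H$ of the last table has degree $2$ (or is zero), it lies again in $L_i$, so $a\cdot X=[a\cdot H,Y]\pm[H,a\cdot Y]$ inside $L_i$ and therefore
$$\phi(a\cdot X)=\bigl[\phi(a\cdot H),\phi(Y)\bigr]\pm\bigl[\phi(H),\phi(a\cdot Y)\bigr]=\bigl[\phi(a\cdot H),\phi(Y)\bigr]\pm\bigl[\phi(H),[a,\phi(Y)]\bigr],$$
the second step using the inductive hypothesis $\phi(a\cdot Y)=a\cdot\phi(Y)=[a,\phi(Y)]$. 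On the other hand, by the Jacobi identity in $\bar\eta$,
$$a\cdot\phi(X)=\bigl[a,[\phi(H),\phi(Y)]\bigr]=\bigl[[a,\phi(H)],\phi(Y)\bigr]\pm\bigl[\phi(H),[a,\phi(Y)]\bigr].$$
Subtracting, $\phi(a\cdot X)-a\cdot\phi(X)=\bigl[\,\phi(a\cdot H)-a\cdot\phi(H)\,,\,\phi(Y)\,\bigr]$, which is $0$ by Lemma \ref{ho} applied to $Y\in(L_i)_{\ge3}$. This closes the induction.

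The content of the argument is that the individual maps $\phi_i$ are \emph{not} $\bar\eta$-module homomorphisms (for example $0=\phi_2(c\cdot D)\ne c\,\phi_2(D)=cd$); their exact failure to be module maps is the family of degree-$2$ elements $\phi(a\cdot H)-a\cdot\phi(H)$, and Lemma \ref{ho} --- which itself rests on Lemma \ref{dd}, Lemma \ref{cg} and Proposition \ref{bf} --- says precisely that these elements, while nonzero, kill everything in $\phi_i((L_i)_{\ge3})$ under the adjoint action. So the present proposition is formally a combination of Leibniz, Jacobi and Lemma \ref{ho}; the real obstacle lies earlier, in establishing those annihilation statements, and the only tedium here is the finite list of low-degree checks in degrees $1,2,3$.
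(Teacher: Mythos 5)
Your proposal is correct and follows essentially the same route as the paper: reduce to $a$ a generator of $\bar\eta$, check degrees $\le3$ by hand, and for $X=[H,Y]$ in degree $\ge4$ use the derivation property of the action together with the multiplicativity of $\phi_i$ and the Jacobi identity to express the defect $\phi(a\cdot X)-a\cdot\phi(X)$ as $[\phi(a\cdot H)-a\cdot\phi(H),\phi(Y)]$, which Lemma~\ref{ho} kills. The only cosmetic difference is that you make explicit both the reduction to generators and the subtraction that isolates the Lemma~\ref{ho} term, whereas the paper writes the chain of equalities in one line; the substance is identical.
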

\begin{proof}
As usual there is a number of equalities to check in degrees
$\le3$. For higher degrees we use induction. Suppose $a$, $H$ and $X$ are
as in Lemma \ref{ho} and suppose we know inductively
$\phi(a.X)=a\phi(X)$ and we want to prove that
$\phi(a.HX)=a\phi(HX)$. We have
\begin{align*}
&\phi(a.HX)=\phi((aH)X)-\phi(H(a.X))=\phi(a.H)\phi(X)-\phi(H)\phi(a.X)=\\
&=\phi(a.H)\phi(X)-\phi(H)a\phi(X)=(a\phi(H))\phi(X)-\phi(H)a\phi(X)=a\phi(H)\phi(X)=a\phi(HX)
\end{align*}
\end{proof}
We will now prove two properties about the module $M$, which are
important for us to be able to deduce that $\phi$ is an isomorphism.
\begin{lm}\label{orto}
\begin{align*}
\phi_i((L_i)_{\ge3}).(L_j)_{\ge3}=0\quad\text{for}\ \ i\ne j\\
\\
\phi_i(x).y=[x,y]\quad\text{for}\ \ x,y\in L_i
\end{align*}
\end{lm}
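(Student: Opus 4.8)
The plan is to prove both assertions of Lemma \ref{orto} simultaneously by induction on degree, exploiting the fact that the module actions $(L_i)_{\ge3}$ were built up from derivations of $\F(C,D',G)/<C^2,[C,[D',G]]>$, $\F(D,E)$ and $\F(B,F)$ whose effect on generators is tabulated above. Concretely, for the first claim I would fix $i\ne j$ and take $x\in(L_i)_{\ge3}$ and $y\in(L_j)_{\ge3}$; writing $y$ as an iterated Lie product $y=[h,y']$ with $h$ one of the generators of $L_j$ and $\deg(y')\ge3$, I would compute $\phi_i(x).[h,y']=[\phi_i(x).h,y']\pm[h,\phi_i(x).y']$ (since the action is by derivations in degree $\ge3$) and reduce to the case $\deg(y)=3$ plus an inductive hypothesis. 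For $\deg(y)=3$ the claim is a finite check against the degree-$2$ and degree-$3$ tables; for the step, $\phi_i(x).h$ must be analyzed using the structure established in Propositions \ref{de}, \ref{bf}, \ref{cd'g}, which say precisely that $\suba(d,e)_{\ge3}$, $\suba(b,f)_{\ge3}+\rad(\eta)$ and $\suba(c,d',g)_{\ge3}$ annihilate one another in the relevant patterns.

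For the second claim, $\phi_i(x).y=[x,y]$ for $x,y\in L_i$, the point is that the module action of $L_i$ on itself, transported through $\phi_i$, agrees with the internal bracket of $L_i$. Here I would again induct on $\deg(y)$: when $y=H$ is a generator the identity $\phi_i(a.H)\overset{?}{=}[\phi_i(a),H]$ in $L_i$ can be read off directly from the last table (where $c,g$ act as $\ad_C,\ad_G$, $d+e$ acts as $\ad_{D'}$ on $L_1$; $d,e$ act as $\ad_D,\ad_E$ on $L_2$; and $b,f$ act naturally on $L_3$), with the only discrepancies being exactly the entries recorded in Lemma \ref{ho}. For $\deg(y)\ge3$ and $y=[H,y']$ I would write $\phi_i(x).[H,y']=[\phi_i(x).H,y']\pm[H,\phi_i(x).y']$, apply the induction hypothesis to the second term, and handle the first term using the generator case together with the Jacobi identity in $L_i$, so that the whole thing collapses to $[x,[H,y']]$.

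The main obstacle will be the first identity, and within it the generators $H$ for which $\phi_i(a.H)\ne a\phi_i(H)$ — precisely the list $b.D',d.D',e.D',f.D',c.D,f.E,g.E,d.F,e.B,e.F$ from Lemma \ref{ho}. For these, the derivation computation $\phi_i(x).[H,y']$ produces a correction term $(\phi_i(a.H)-a\phi_i(H))\phi(X)$ which is not obviously zero inside $\bar\eta$; this is exactly what Lemma \ref{ho} is designed to kill, using Lemmas \ref{dd}, \ref{cg} and Proposition \ref{bf}. So the real content is to set up the induction so that every such correction term appears in the form covered by Lemma \ref{ho}, and then the vanishing results of Section \ref{sec:ideal} finish the job. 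The remaining cases, where the actions are the honest adjoint actions, are routine: they reduce to the Jacobi identity and to the fact that the three subalgebras annihilate each other, which was the substance of Propositions \ref{de}, \ref{bf}, \ref{cd'g}.
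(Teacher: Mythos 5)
Your plan is considerably more elaborate than what the lemma actually requires, and both halves have a structural problem.

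For the orthogonality claim $\phi_i((L_i)_{\ge3}).(L_j)_{\ge3}=0$, the paper needs no induction at all. It simply reads off from the last table which $\bar\eta$-generators kill each $L_j$: $c,d,e,g$ kill $L_3$; $b^2,f^2,c,g$ kill $L_2$; $b,d^2,e^2,f^2$ kill $L_1$. Since $\phi_i(L_i)$ lies in the Lie subalgebra of $\bar\eta$ generated by the corresponding images ($c,d',g$ for $i=1$; $d,e$ for $i=2$; $b,f$ for $i=3$), and since any degree-$\ge3$ iterated bracket of those elements is forced to involve one of the killing generators (respectively $b^2$ or $f^2$ for $i=3$), the action on $L_j$ vanishes. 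Your invocation of Propositions~\ref{de}, \ref{bf} and \ref{cd'g} is misplaced here: those are statements about ideals inside $\eta$, whereas at this point $M=L_1\oplus L_2\oplus L_3$ is an \emph{abstract} $\bar\eta$-module whose structure must be analyzed from its defining tables; you cannot yet transfer facts about subalgebras of $\eta$ to facts about $L_j$, because the identification $L_j\cong\phi_j(L_j)$ is precisely what the whole section is building towards. Also, in your inductive step the term $\phi_i(x).h$ (with $h$ a degree-$1$ generator of $L_j$) lies outside the range covered by your downward induction on $\deg(y)\ge3$, so the step does not close.

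For the second claim $\phi_i(x).y=[x,y]$, the paper inducts on $\deg(x)$: the base case $\deg(x)=1$ holds for all $y$ because the table literally says the matching generators act as $\ad_C,\ad_{D'},\ad_G$, $\ad_D,\ad_E$, $\ad_B,\ad_F$, and the step $\phi(Hx').y=[H,[x',y]]-(-1)^{x'}[x',[H,y]]=[Hx',y]$ closes by Jacobi. Your induction on $\deg(y)$ inverts this and leaves a gap: when you peel $y=[H,y']$ the first term is $[\phi_i(x).H,\,y']$, so you need $\phi_i(x).H=[x,H]$ for \emph{all} degrees of $x$, not only degree $1$; that is not ``read off from the table,'' which only records the degree-$1$ actions, and it is not supplied by your induction, which fixes $x$ and decreases $\deg(y)$. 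Finally, Lemma~\ref{ho} is not the right tool here: the discrepancies it lists ($b.D', d.D', e.D', \dots$) concern generators $a$ of $\bar\eta$ that are \emph{not} $\phi_i$-images of generators of $L_i$; they matter for proving $\phi$ is a $\bar\eta$-module map (Proposition~\ref{homo}), not for the internal statement $\phi_i(x).y=[x,y]$ with both $x,y\in L_i$, where by construction the matching generators act exactly adjointly and there are no discrepancies to correct.
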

\begin{proof}
Since $c,d,e,g$ act trivially on $L_3$ we have that
$\phi_i((L_i))(L_3)=0$ for $i=1,2$. Since $b^2,f^2,c,g$ act trivially on
$L_2$, we have $\phi_i((L_i)_{\ge3})(L_2)=0$ for $i=1,3$. Since
$b,d^2,e^2,f^2$ act trivially on $L_1$, we have
$\phi_i((L_i)_{\ge3})(L_1)=0$ for $i=2,3$.

To prove the second claim, we have already noticed that $c,d+e,g$ act
on $L_1$ as $\ad_C,\ad_{D'},\ad_G$ and $d,e$ act on $L_2$ as
$\ad_D,\ad_E$ and $b,f$ act on $L_3$ as $\ad_B,\ad_F$. Hence the claim
is true when $x$ is of degree 1. The induction step is as follows
($\phi(H)=h$, we use $x$ in the exponent to denote the degree of $x$)
\begin{align*}
\phi(Hx).y=h.\phi(x).y-(-1)^x\phi(x).h.y=h.[x,y]-(-1)^x[x,h.y]=[h.x,y]=[Hx,y]
\end{align*}
\end{proof}
We now give a general theorem which we will use to prove that the map
$\phi:\ M\to\bar\eta$ is an isomorphism.

\begin{thm}\label{modul}
Let $\g$ be a graded (in degrees $\ge1$) Lie superalgebra generated by a set $X$ modulo a
set of relations $Y$ in the free Lie algebra $\F(X)$. Let
$M=\oplus_{i\ge n}M_i$ be a graded $\g$-module, generated by
$M_n$. Let $\phi:M\to\g_{\ge n}$ be a graded surjective $\g$-module
homomorphism, such that 
\begin{equation}\label{modmod}
\phi(x).y=-(-1)^{xy}\phi(y).x\quad \text{for all}\quad x\in M_n\text{
and all }y\in M
\end{equation}
Suppose $\deg(x)\le n$ for all $x\in X$ and all $x\in Y$. Suppose
$\phi$ is an isomorphism in all degrees $n\le i\le 2n-1$. Then $\phi$
is an isomorphism in all degrees. 
\end{thm}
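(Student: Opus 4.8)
The plan is to repackage the triple $(\g,M,\phi)$ as a single graded Lie superalgebra and then compare presentations; constructing this Lie algebra is the real content, while the comparison is an instance of Theorem 5.3 of \cite{Lo-Ro} (and can also be argued directly, as I indicate below). Concretely, I would build a graded Lie superalgebra $L$ with $L_i=\g_i$ for $i<n$ and $L_i=M_i$ for $i\ge n$, with bracket defined piece by piece: on $\g_{<n}\times\g_{<n}$ it is the bracket of $\g$ when the sum of the two degrees is $<n$ and $\phi^{-1}$ of the bracket of $\g$ when that sum lies in $[n,2n-2]$ (well defined since $\phi$ is an isomorphism in those degrees, and no bracket of two elements of $\g_{<n}$ can have larger degree); on the mixed pieces $\g_{<n}\times M$ and $M\times\g_{<n}$ it is $\pm$ the module action $(a,m)\mapsto a.m$; and on $M\times M$ it is $(m,m')\mapsto\phi(m).m'$. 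Then the map $\tilde\phi\colon L\to\g$ that is the identity in degrees $<n$ and equals $\phi$ in degrees $\ge n$ is a graded surjective homomorphism of Lie superalgebras which, by hypothesis, is an isomorphism in degrees $\le 2n-1$.

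The first substantial task is to verify that $L$ really is a graded Lie superalgebra. Graded antisymmetry is built in except on $M\times M$, where it amounts to the identity $\phi(m).m'=-(-1)^{mm'}\phi(m').m$ for \emph{all} $m,m'\in M$; I would prove this by induction on $\deg m$, the base case $\deg m=n$ being precisely hypothesis (\ref{modmod}), and the inductive step writing $m=a.p$ with $a\in\g_{\ge1}$ and $\deg p<\deg m$, using $\phi(m)=[a,\phi(p)]$ and expanding via the module axiom and the inductive hypothesis applied to $p$ against $m'$ and against $a.m'$. The graded Jacobi identity is then a finite case check according to how many of the three arguments lie in $\g_{<n}$: with two or three of them in $\g_{<n}$ it follows by transporting the Jacobi identity of $\g$ through $\phi^{-1}$ (legitimate because every bracket that occurs has degree $\le 2n-2$) together with the module axiom; with one argument or none in $\g_{<n}$ it reduces, after replacing each $[m,m']_L$ by $\phi(m).m'$, to the module axiom $[a,b].m=a.(b.m)-(-1)^{ab}b.(a.m)$. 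I expect this case analysis to be the most laborious step, although each individual case is routine.

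With $L$ in hand the conclusion follows quickly. Since $M$ is generated by $M_n$ over $\g$ and $\g_k=\phi(M_k)$ for $k\ge n$, an easy induction on degree shows that $L$ is generated as a Lie algebra by the set $\g_{<n}\cup M_n$, all of whose elements have degree $\le n$. Because $\tilde\phi$ is an isomorphism in degrees $\le n$ and the generators $X$ of $\g$ have degrees $\le n$, lifting $X$ through $\tilde\phi^{-1}$ defines a Lie algebra map $\theta\colon\F(X)\to L$ with $\tilde\phi\circ\theta$ equal to the canonical projection $\F(X)\twoheadrightarrow\g$. For each relation $y\in Y$ we have $\deg y\le n\le 2n-1$ and $\tilde\phi(\theta(y))=0$, so $\theta(y)$ lies in $\ker\tilde\phi=\ker\phi$ in a degree where $\tilde\phi$ is an isomorphism; hence $\theta(y)=0$, so $\theta$ kills the ideal $(Y)$ and induces $\bar\theta\colon\g\to L$ with $\tilde\phi\circ\bar\theta=\mathrm{id}_{\g}$. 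The subalgebra $\bar\theta(\g)$ of $L$ contains $\g_{<n}$ and $M_n$ (again because $\tilde\phi$ is an isomorphism in those degrees), hence contains a generating set of $L$, so $\bar\theta$ is onto; being also split injective, $\bar\theta$ is an isomorphism, and therefore so is $\tilde\phi$ — that is, $\phi$ is an isomorphism in all degrees.
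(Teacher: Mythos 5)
There is a genuine gap in the verification that $L$ is a Lie superalgebra. The troublesome configuration is three arguments $a,b,c\in\g_{<n}$ whose pairwise degree sums are all $\ge n$ but whose total degree $\deg a+\deg b+\deg c$ exceeds $2n-1$; this is already possible for $n=3$ with all three arguments of degree $2$, and more generally for every $n\ge3$, since the total degree can be as large as $3(n-1)$. In that configuration each single bracket $[a,b]_L=\phi^{-1}([a,b])$ does land in $M$ in degree $\le 2n-2$, but the three terms of the Jacobi identity live in $M_{\deg a+\deg b+\deg c}$, a degree where $\phi$ is not yet known to be injective. Applying $\phi$ shows their alternating sum lies in $\ker\phi$, but that is not enough; and the module axiom does not apply either, because $\phi^{-1}([b,c])$ is not exhibited as $b.m'$ or $c.m'$ for any $m'\in M$. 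The vanishing of that sum in those degrees is exactly an instance of the injectivity you are trying to prove, so the construction is circular at this point. The remaining Jacobi cases, with at least one argument in $M$, do reduce cleanly to the module axiom as you say, and your inductive extension of \eqref{modmod} from $M_n$ to all of $M$ is correct.

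The paper's proof differs at precisely this point: it never forms a Lie algebra. It builds $\hat M=\g_1\oplus\cdots\oplus\g_{n-1}\oplus M$ only as a $\g$-module, where the only thing to verify is that the relations $Y$ act by zero, and that is harmless because each $y\in Y$ has degree $\le n$, so its action on a summand $\g_i$ with $i<n$ lands in degree $\le 2n-1$ where $\phi$ is an isomorphism by hypothesis. The comparison of $\hat M$ with $\g$ is then delegated to Theorem 5.3 of \cite{Lo-Ro}, which reduces the problem to a commutativity law that the paper settles by the same sort of degree-lowering induction you used for the antisymmetry. Your endgame---exhibiting a split Lie-algebra section $\bar\theta\colon\g\to L$ and observing it is surjective---is an attractive streamlining, but it presupposes exactly the Lie-algebra structure on $L$ that is in question.
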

\begin{proof}
Put $\hat M=G_1\oplus\ldots\oplus G_{n-1}\oplus M$ where $G_i$ is an isomorphic copy
of $\g_i$ for $i=1,\ldots n-1$. Extend the map $\phi$ to a map of vector
spaces $\phi:\hat M\to \g$. We define an action of $X$ on $\hat M$ by mirroring the action
on $\g$ up to degree $n-1$ and then in degree $\ge n$ using the $\g$-module structure on
$M$. Then the relations in $\g$ will operate as zero
in degree $\ge n$ and also in degree $\le n-1$, since $\deg(y)\le n$
for all relations $y\in Y$ and by assumption
$\phi$ is an isomorphism in degree $\le 2n-1$. Hence $\hat M$ is a
$\g$-module and $\phi$ is a surjective $\g$-module map. Let
$m_x\in\hat M$ be elements such that $\phi(m_x)=\bar x$ for all $x\in X$.
We have that $\hat M$ 
is generated by $\{m_x;x\in X\}$ as a $\g$-module, since this is true in degree
$\le n$ and in higher degrees it follows since $M$ is generated by
$M_n$ as a $\g$-module. Now Theorem 5.3 in \cite{Lo-Ro} may be 
applied. Since $\phi$ is an isomorphism in degree $\le n$, the relations
do not give any elements in the kernel. Hence, we are left with the commutative law,
$\phi(a).m_x=-(-1)^ax.a$, where $a\in M$ and $x\in X$.  Since $\phi$
is an isomorphism in degree $\le 2n-1$, it is enough to assume that
$\deg(a)\ge n$. 

By \eqref{modmod} and Lemma 5.2 in \cite{Lo-Ro}, the
commutative law  
\begin{equation}\label{modcomm}
\phi(a).y=-(-1)^{ay}\phi(y).a
\end{equation}
 holds when $y$ also is of
degree $\ge n$. 

We want to
reduce the degree of $y$ down to 1, and we do this by induction
downwards. Suppose \eqref{modcomm} is true when
the degree of $y$ is $>k$ and assume $y$ is of degree $k<n$. Since
$\phi$ is an isomorphism in degree $\le 2n-1$, the law is true when the
degree of $a$ is $n$. We prove the law by induction over the degree of
$a$. Since $\deg(a)>n$, $a$ is not a generator. Let $h\in \g$ and $a=h.a'\in M$. We have
\begin{align*}
\phi(h.a').y&=(h\phi(a')).y=h.\phi(a').y-(-1)^{a'h}\phi(a').h.y=(\text{induction
over }a)\\
&=-(-1)^{a'y}h.\phi(y).a'-(-1)^{a'h}\phi(a').h.y=(\text{induction
over }y)\\
&=-(-1)^{a'y}h.\phi(y).a'+(-1)^{a'y}\phi(h.y).a'\\
\phi(y).h.a'&=(\phi(y)h).a'+(-1)^{yh}h.\phi(y).a'=\\
&=-(-1)^{yh}(h\phi(y)).a'+(-1)^{yh}h.\phi(y).a'
\end{align*}
Hence, $\phi(h.a').y=-(-1)^{a'y+yh}\phi(y).h.a'$. 
\end{proof}

 The following Theorem is an application of Theorem \ref{modul} to a
situation which is one step towards the situation we have in this paper.
\begin{thm}\label{one} Let $\g$ be a graded Lie superalgebra, generated in degree 1 and with
relations in degree $\le r$. Let $L$ be a
graded $\g$-module Lie superalgebra generated in degree 1 and let $\phi:\ L\to\g$ be a Lie
algebra homomorphism (not necessarily a $\g$-module
homomorphism). Suppose there is an $n\ge r$ such that \hfill\break 
$\phi_{\ge n}:L_{\ge n}\to \g_{\ge n}$ is a
surjective $\g$-module map and an isomorphism in degrees $n\le j\le
2n-1$. Suppose also
\begin{align}
\label{mod}
\phi(x).y&=[x,y]\quad\text{for}\ \ x,y\in L
\end{align}
Then $\phi$ is an isomorphism as Lie algebras in all degrees $\ge n$.
\end{thm}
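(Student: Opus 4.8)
The plan is to deduce Theorem~\ref{one} from Theorem~\ref{modul}, applied to the $\g$-module $M=L_{\ge n}$. First I would fix a presentation of $\g$: a set $X$ of degree-$1$ generators together with a set $Y\subseteq\F(X)$ of defining relations, so that $\deg(x)=1$ for $x\in X$ and $\deg(y)\le r\le n$ for $y\in Y$; in particular every element of $X\cup Y$ has degree $\le n$, which is one of the hypotheses of Theorem~\ref{modul}. Since $\g_i.L_j\subseteq L_{i+j}$, the graded subspace $M=L_{\ge n}$ is a $\g$-submodule of $L$, and by assumption $\phi$ restricts to a graded surjective $\g$-module homomorphism $\phi_{\ge n}\colon M\to\g_{\ge n}$ which is an isomorphism in degrees $n\le i\le 2n-1$. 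So most of the hypotheses of Theorem~\ref{modul} are already in place, with $M_n=L_n$ in the role of the generating degree.

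Two further points have to be verified. The first is that $M$ is generated by $M_n=L_n$ as a $\g$-module. Here I would use that a graded Lie superalgebra generated in degree $1$ satisfies $L_{k+1}=[L_1,L_k]$ for all $k\ge1$; by \eqref{mod} this can be rewritten $L_{k+1}=\phi(L_1).L_k\subseteq\g_1.L_k$, and an induction on $k\ge n$ then shows that every $L_k$ with $k\ge n$, hence all of $L_{\ge n}$, lies in the $\g$-submodule generated by $L_n$. The second is the symmetry condition \eqref{modmod}, namely $\phi(x).y=-(-1)^{xy}\phi(y).x$ for $x\in M_n$ and $y\in M$; this is immediate from \eqref{mod} and the graded antisymmetry $[y,x]=-(-1)^{xy}[x,y]$ of the bracket of $L$, since then $-(-1)^{xy}\phi(y).x=-(-1)^{xy}[y,x]=[x,y]=\phi(x).y$.

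With these checked, Theorem~\ref{modul} yields that $\phi\colon L_{\ge n}\to\g_{\ge n}$ is an isomorphism in every degree; since $\phi$ is moreover a homomorphism of Lie algebras, it is an isomorphism of graded Lie algebras in all degrees $\ge n$, which is the assertion. I expect the step needing genuine care to be the identity $L_{k+1}=[L_1,L_k]$ and the resulting $\g$-module generation of $L_{\ge n}$ by $L_n$; the verification of the symmetry condition and the bookkeeping of degrees and signs are routine.
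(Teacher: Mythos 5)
Your proposal is correct and follows essentially the same route as the paper: set $M=L_{\ge n}$, observe that it is generated by $M_n$ because $L$ is generated in degree $1$ and \eqref{mod} converts bracketing by $L_1$ into the $\g$-action of $\phi(L_1)$, note that \eqref{modmod} is immediate from \eqref{mod} and antisymmetry of the bracket of $L$, and invoke Theorem~\ref{modul}. The extra bookkeeping you supply (the presentation of $\g$, the fact that $L_{\ge n}$ is a $\g$-submodule, the degree bound on $X\cup Y$) simply makes explicit what the paper leaves implicit.
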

\begin{proof}
Put $M=L_{\ge n}$. Then $M$ is generated by $M_n$, since $L$ is generated by
$L_1$ as a Lie algebra and hence $L_{n+j}$ is generated from $L_n$ by
operating with $\phi(L_1)$, using the assumption \eqref{mod}. The
commutative law \eqref{modmod} follows from \eqref{mod}, since $L$ is
a Lie algebra. Hence the result follows by Theorem \ref{modul}.

\sudda{Put $M=G_1\oplus\ldots\oplus G_{n-1}\oplus L_{\ge n}$ where $G_i$ is an isomorphic copy
of $\g_i$ for $i=1,\ldots n-1$. Extend the map $\phi$ to a map of vector
spaces $\phi:\ M\to \g$. We define an action of $\g_1$ on $M$ by mirroring the action
on $\g$ up to degree $n-1$ and then in degree $\ge n$ using the $\g$-module structure on
$L$. Then the relations in $\g$ will operate as zero
in degree $\ge n$ and also in degree $\le n-1$, since $r\le n$ and by assumption
$\phi$ is an isomorphism in degree $\le 2n-1$. Hence $M$ is a
$\g$-module and $\phi$ is a surjective $\g$-module map. Moreover $M$
is generated by $G_1$ as a $\g$-module, since this is true in degree
$\le n$ and in higher degrees it follows since $L$ is generated by
$L_1$ as a Lie algebra and hence $L_{n+j}$ is generated from $L_n$ by
operating with $\phi(L_1)$, using the assumption
\eqref{mod}. Now Theorem 5.3 in \cite{Lo-Ro} may be 
applied. Since $\phi$ is an isomorphism in degree $\le r$, the relations
do not give any elements in the kernel. Hence, we are left with the commutative law,
$\phi(X).G=-(-1)^X\phi(G).X$, where $X\in M$ and $G\in G_1$.  Since $\phi$
is an isomorphism in degree $\le n$, it is enough to assume that
$\deg(X)\ge n$. 

By \eqref{mod}, the
commutative law  
\begin{equation}\label{comm}
\phi(X).Y=-(-1)^{XY}\phi(Y).X
\end{equation}
 holds when $Y$ also is of
degree $\ge n$. 

We want to
reduce the degree of $Y$ down to 1, and we do this by induction
downwards. Suppose \eqref{comm} is true when
the degree of $Y$ is $>k$ and assume $Y$ is of degree $k<n$. Since
$\phi$ is an isomorphism in degree $\le 2n-1$, the law is true when the
degree of $X$ is $n$. We prove the law by induction over the degree of
$X$. Let $H\in L_1$ and $X\in L_{\ge n}$. Let
$h=\phi(H)$. We have
\begin{align*}
\phi(HX).Y&=(\phi(H)\phi(X)).Y=h.\phi(X).Y-(-1)^X\phi(X).h.Y=(\text{induction
over }X)\\
&=-(-1)^{XY}h.\phi(Y).X-(-1)^X\phi(X).h.Y=(\text{induction
over }Y)\\
&=-(-1)^{XY}h.\phi(Y).X+(-1)^{XY}\phi(h.Y).X\\
\phi(Y).HX&=(\text{by \eqref{mod}})=\phi(Y).\phi(H).X=(\phi(Y)h).X+(-1)^Yh.\phi(Y).X=\\
&=-(-1)^Y(h\phi(Y)).X+(-1)^Yh.\phi(Y).X
\end{align*}
Hence, $\phi(HX).Y=-(-1)^{XY+Y}\phi(Y).HX$. }
\end{proof}

Here, finally is a theorem which is adapted to our situation.
\begin{thm}\label{iso}
Let $\g$ be a graded Lie superalgebra, generated in degree 1 and with
relations in degree $\le r$. For $i=1,\ldots,s$, let $L_i$ be a
graded $\g$-module Lie superalgebra generated in degree 1 and $\phi_i:L_i\to\g$ be a Lie
algebra homomorphism (not necessarily a $\g$-module
homomorphism). Suppose there is an $n\ge r$ such that
$\phi_1,\ldots,\phi_s$ induce a map $\phi:\ (L_1\oplus
L_2\oplus\ldots\oplus L_s)_{\ge n}\to \g_{\ge n}$ which is a
surjective $\g$-module map and an isomorphism in degrees $n\le j\le
2n-1$. Suppose also for all $i=1,\ldots,s$,
\begin{align}
\label{modi}
\phi_i(x).y&=[x,y]\quad\text{for}\ \ x,y\in L_i\\
\nonumber\\
\label{ort}
\phi_i((L_i)_{\ge n}).(L_j)_{\ge n}&=0\quad\text{for}\ \ i\ne j
\end{align}
Then $\phi$ is an isomorphism as Lie algebras in all degrees $\ge n$. 
\end{thm}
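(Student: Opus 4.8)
The plan is to reduce the statement to Theorem~\ref{modul}, applied to the direct sum. Form $L=L_1\oplus\cdots\oplus L_s$, equipped with the block-diagonal bracket (so $[x,y]=0$ whenever $x\in L_i$ and $y\in L_j$ with $i\ne j$) and with the componentwise $\g$-action. Since each $L_i$ is a $\g$-module Lie superalgebra and the bracket between distinct summands vanishes, $L$ is again a $\g$-module Lie superalgebra, and $\phi=\phi_1+\cdots+\phi_s$ restricts on $L_{\ge n}$ to the map $\phi$ of the hypothesis. Because $\g$ is concentrated in positive degrees, $M:=L_{\ge n}=\bigoplus_i(L_i)_{\ge n}$ is a $\g$-submodule of $L$; this is the module to which I would apply Theorem~\ref{modul}, presenting $\g$ on its degree-$1$ generators $X$ modulo its relations $Y$, which have degree $\le r\le n$.

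Next I would verify the hypotheses of Theorem~\ref{modul} for $M$. That $M$ is generated by $M_n=L_n$ as a $\g$-module follows from each $L_i$ being generated in degree $1$: then $(L_i)_m=[(L_i)_1,(L_i)_{m-1}]$ for $m\ge2$, which by \eqref{modi} equals $\phi_i((L_i)_1).(L_i)_{m-1}\subseteq\g_1.(L_i)_{m-1}$, so the claim follows by induction on $m$. That $\phi\colon M\to\g_{\ge n}$ is a graded surjective $\g$-module map which is an isomorphism in degrees $n\le j\le 2n-1$, and that $\deg(x)\le n$ for all $x\in X$ and all $x\in Y$, are part of the hypotheses. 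The one point needing work is the commutative law \eqref{modmod}: for homogeneous $x\in M_n$ and $y\in M$, write $x=\sum_i x_i$ and $y=\sum_j y_j$ with $x_i\in(L_i)_n$ and $y_j\in(L_j)_{\ge n}$; then $\phi(x).y=\sum_{i,j}\phi_i(x_i).y_j$, where the cross terms with $i\ne j$ vanish by \eqref{ort} (both arguments lie in degrees $\ge n$) and the diagonal terms equal $[x_i,y_i]$ by \eqref{modi}, so $\phi(x).y=\sum_i[x_i,y_i]$; the same computation yields $\phi(y).x=\sum_i[y_i,x_i]$, and graded antisymmetry of the bracket in each $L_i$ turns this into $\phi(x).y=-(-1)^{xy}\phi(y).x$. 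Theorem~\ref{modul} then gives that $\phi$ is bijective in every degree $\ge n$.

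It remains to upgrade this bijection to a Lie algebra isomorphism on $L_{\ge n}$, equivalently on $\g_{\ge n}$. When $x,y$ lie in the same summand $L_i$ this is immediate, since $\phi_i$ is a Lie homomorphism. When $x\in(L_i)_{\ge n}$ and $y\in(L_j)_{\ge n}$ with $i\ne j$, then $[x,y]=0$ in $L$, while $\phi(y)=\phi_j(y)$ and, using that $\phi$ is a $\g$-module map, $[\phi_i(x),\phi_j(y)]=\phi_i(x).\phi(y)=\phi(\phi_i(x).y)=0$, the last step by \eqref{ort}. Hence $\phi$ respects brackets in degrees $\ge n$, which proves the theorem. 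I do not expect a genuine obstacle: the statement is engineered so that Theorem~\ref{modul} does the work, and the only delicate spot is the verification of \eqref{modmod} in the presence of several summands, which is exactly where \eqref{ort}---and the fact that it is only required in degrees $\ge n$---is used. (One cannot apply Theorem~\ref{one} directly to $L=\bigoplus_i L_i$, because its hypothesis \eqref{mod} fails there in low degrees; e.g.\ $\phi_2(D).C=CD'\ne0=[D,C]$ in the module of Section~\ref{sec:mod}.)
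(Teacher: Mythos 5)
Your proposal is correct and takes essentially the same route as the paper: form $L=\bigoplus_i L_i$ with the block-diagonal bracket, check that \eqref{ort} and \eqref{modi} give the commutative law \eqref{modmod} on $L_{\ge n}$ and that $M_n$ generates $M$, and then apply Theorem~\ref{modul}. Your final step --- explicitly verifying that the bijection respects brackets across summands via the $\g$-module property and \eqref{ort} --- is left implicit in the paper, so spelling it out is a welcome clarification rather than a deviation.
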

\begin{proof}
We cannot apply Theorem \ref{one} directly, but we can slightly modify
its proof to get a proof of Theorem \ref{iso}. Let $L=L_1\oplus\dots\oplus L_s$ and
$\phi=\phi_1+\ldots+\phi_s:L\to\g$. Suppose $x,y\in L_{\ge n}$,
$x=x_1+\ldots+x_s, y=y_1+\ldots+ y_s$. Then by \eqref{modi} and 
\eqref{ort}
\begin{align*}
\phi(x).y=\sum_{ij}\phi_i(x_i).y_j=\sum_i\phi_i(x_i).y_i=\sum_i[x_i,y_i]=[x,y]
\end{align*}
This proves the condition \eqref{mod} of Theorem \ref{one} in degree
$\ge n$, which is enough to get \eqref{modmod} of Theorem \ref{modul}. The condition
\eqref{mod} is also
used to prove that $M$ is generated by $M_n$, but here it is enough to
prove that for each $i=1,\ldots,s$, $(L_i)_{n+j}$ is generated from
$(L_i)_{n}$ by operating with $(L_i)_1$ and this follows from \eqref{modi}. 
\end{proof}

\vspace{10pt}
Now we are able to prove the statements given in section \ref{sec:irr}, in
a more precise form. Observe that $\bar\eta$ in this section differs
from the one defined in section \ref{sec:irr} in the sense that it has one  
less basis element in degree two. Thus, the series \eqref{eq26} is
obtained by multiplying the series below by $\frac{1}{1-t^2}$.

\begin{crl} \label{cor}We have that $(\bar\eta)_{\ge3}$ is isomorphic as a Lie
algebra to 
$$
\bigl(\F(C,D',G)/<C^2,[C,[D',G]]>\oplus\F(D,E)\oplus\F(B,F)\bigr)_{\ge3}
$$ 
and the Hilbert series of the enveloping algebra of $\bar\eta$ is
$$
\frac{1-t}{(1-2t)^2(1-3t+t^2)}
$$
\end{crl}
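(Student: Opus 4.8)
The plan is to assemble Corollary~\ref{cor} from the structural results proved earlier in this section, applying Theorem~\ref{iso} in the case $s=3$. First I would set $\g=\bar\eta$ (the version of this section, with one fewer basis element in degree $2$), which is generated in degree $1$ with relations in degree $\le 2$, so $r=2$. I would take $L_1=\F(C,D',G)/\langle C^2,[C,[D',G]]\rangle$, $L_2=\F(D,E)$ and $L_3=\F(B,F)$, with the $\bar\eta$-module structures and the Lie algebra maps $\phi_1,\phi_2,\phi_3$ constructed above via Definition~\ref{dfn} and the operation tables, where $\phi_1(C)=c$, $\phi_1(D')=d+e$, $\phi_1(G)=g$, $\phi_2(D)=d$, $\phi_2(E)=e$, $\phi_3(B)=b$, $\phi_3(F)=f$. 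The vector-space module $M$ of Definition~\ref{dfn} is precisely $(L_1\oplus L_2\oplus L_3)_{\ge 3}$ in degrees $\ge 3$, together with the explicit $M_1,M_2$; Proposition~\ref{homo} says the induced $\phi\colon M\to\bar\eta$ is a $\bar\eta$-module homomorphism, Proposition~\ref{sum} says it is surjective in all degrees $\ge 3$ (and inspection handles degrees $1,2$), and Lemma~\ref{orto} gives exactly the hypotheses \eqref{modi} and \eqref{ort} of Theorem~\ref{iso}.

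The one genuine verification left is that $\phi$ is an isomorphism in the range $n\le j\le 2n-1$; here $n=3$, so I need $\phi$ to be bijective in degrees $3,4,5$. Since surjectivity is already in hand, this is a dimension count: compute $\dim(L_1\oplus L_2\oplus L_3)_j$ for $j=3,4,5$ from the free-Lie-algebra / Koszul-dual combinatorics (the presentations of the three summands are explicit, so their Hilbert series are computable), and compare with $\dim\bar\eta_j$, which the \texttt{liedim} computations of section~\ref{sec:irr} provide (recalling the shift by one basis element in degree $2$ and hence the factor $\frac{1}{1-t^2}$ relating the two normalizations). Once these three equalities hold, Theorem~\ref{iso} yields that $\phi$ is a Lie algebra isomorphism in all degrees $\ge 3$, which is the first assertion of the corollary.

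For the Hilbert series of $U(\bar\eta)$, I would combine the isomorphism $(\bar\eta)_{\ge 3}\cong (L_1\oplus L_2\oplus L_3)_{\ge 3}$ with the Poincar\'e--Birkhoff--Witt theorem. Each $L_i$ has an explicit finite presentation, so its graded dimensions $\dim(L_i)_m$ are determined: $\F(D,E)$ and $\F(B,F)$ are free on two generators of degree $1$, and $L_1=\F(C,D',G)/\langle C^2,[C,[D',G]]\rangle$ is free on three degree-$1$ generators modulo one quadratic and one cubic relation, whose Lie-algebra Hilbert series is again computable in closed form. Using PBW, $U(\bar\eta)(t)=\prod_{m\ge 1}(1+t^{2m-1})^{\dim\bar\eta_{2m-1}}/(1-t^{2m})^{\dim\bar\eta_{2m}}$; split this product according to the degrees $1,2$ contributions (read off the $6$ generators and the $10$ degree-$2$ elements of $\bar\eta$ in this section's normalization) times the $\ge 3$ contribution, and in the $\ge 3$ range substitute the direct-sum decomposition so that $\dim\bar\eta_m=\dim(L_1)_m+\dim(L_2)_m+\dim(L_3)_m$. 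The product then factors as the product of the three enveloping-algebra series, and after combining with the low-degree correction one gets $\frac{1-t}{(1-2t)^2(1-3t+t^2)}$.

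The main obstacle is the bookkeeping in the finite-degree check: making sure the low-degree pieces $M_1,M_2$ and the three free-Lie-algebra summands really do recombine in degrees $3,4,5$ to give exactly $\dim\bar\eta_j$, with the degree-$2$ normalization shift handled consistently, so that Theorem~\ref{iso} genuinely applies. Everything past that is the PBW factorization, which is purely formal once the three summand Hilbert series are written down; the rational-function identity $(1+t)(1-2t)^2(1-3t+t^2)\cdot U(\bar\eta)(t)$-type manipulation is then a routine check, and multiplying by $\frac{1}{1-t^2}$ recovers the series \eqref{eq26} quoted in section~\ref{sec:irr}.
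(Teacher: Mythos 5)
Your proposal follows the paper's own proof essentially verbatim: apply Theorem~\ref{iso} with $s=3$ and $n=3$ to $L_1=\F(C,D',G)/\langle C^2,[C,[D',G]]\rangle$, $L_2=\F(D,E)$, $L_3=\F(B,F)$, using Proposition~\ref{sum} (surjectivity), Proposition~\ref{homo} (module map), Lemma~\ref{orto} (hypotheses \eqref{modi}, \eqref{ort}), a finite check of bijectivity in degrees $3,4,5$, and then a PBW/Hilbert-series computation for the three summands (the paper settles the $L_1$ piece via the observation that $\{C^2,[C,[D',G]]\}$ is a Gr\"obner basis, yielding $1/(1-3t+t^2)$), followed by the low-degree correction factor $(1+t)^6/(1-t^2)^{10}$. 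One small slip: since $\bar\eta=\eta/\rad(\eta)$ has the degree-$3$ relation $cd'g$ in addition to the quadratic relations and $c^2$, you should take $r=3$ rather than $r=2$; this is harmless here because $n=3\ge r$ still holds, and the paper indeed uses $n=r=s=3$.
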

\begin{proof}
We apply Theorem \ref{iso} with $n=r=s=3$. The premisses are
fullfilled by Proposition
\ref{sum}, Proposition \ref{homo}, Lemma \ref{orto} and the fact that
the map $\phi$ is an isomorphism in degree 3,4,5. This can be checked
either by using the program {\bf liedim} or make computations with the
commutative law in these degrees (which consists of a finite number of
equalities, e.g., $bbf.D_1=d.BBF$, $(g(d+e)c).fF=(ff).GD'C$). 

It is easy to see that the ideal $<C^2,[C,[D',G]]>$ in the free
associative algebra on $C,D',G$ is a Gr\"obner basis. Hence the series
for the quotient is the same as the series of the monomial algebra
$$
k<C,D',G>/<C^2,CD'G>
$$
whose series may be computed, using some combinatorial reasoning, to
be $1/(1-3t+t^2)$. Since we have the infinite product representation
for the series of an enveloping algebra mentioned in section \ref{sec:irr}, the
enveloping algebra of
$\bigl(\F(C,D',G)/<C^2,[C,[D',G]]>\bigr)_{\ge3}$ has the series 
$\frac{(1-t^2)^5}{(1+t)^3(1-3t+t^2)}$. Likewise the series of the other two
parts of $(\bar\eta)_{\ge3}$ have the series
$\frac{(1-t^2)^3}{(1+t)^2(1-2t)}$. Multiplying these three series together
gives the series for the enveloping algebra of $(\bar\eta)_{\ge3}$. To
get the series for the enveloping algebra of $\bar\eta$ one multiplies
further with $(1+t)^6/(1-t^2)^{10}$ which gives the result.
\end{proof}

\subsection{The extension by the radical}\label{sec:ext}

In \cite[chapter XIII (Lie algebras)]{Ca-Ei}, the
definition of the cohomology of a Lie algebra $\bar \g$ with coefficients
in a $\bar \g$-module $M$  is
given and in chapter XIV, section 5 (Extensions of Lie algebras), loc.cit.
an interpretation
of $H^2(\bar \g,M)$ is given as the classification of all Lie algebra
extensions 
$0 \rightarrow M \rightarrow \g \rightarrow \bar \g \rightarrow 0$ where $M$
is an abelian
ideal in $\g$ and $\bar \g$ is the quotient. It  is proved that the
extensions are classified
by all "2-cocycles"   $ c: \bar \g \times  \bar \g  \rightarrow M$ satisfying the
cocycle conditions and that
the Lie algebra structure on $\g = \bar \g \oplus M$ is given by vector space
addition and multiplication by
$[(\bar g_1,m_1),(\bar g_2,m_2)] =([\bar g_1,\bar g_2],\bar g_1.m_2 +m_1.\bar
g_2+c(\bar g_1,\bar g_2))$
Different 2-cocycles give isomorphic extensions if and only if they differ
by a coboundary of
a 1-cochain.
In the case we are considering, 
$\g$ corresponds to $\eta$, $\bar \g$ corresponds to
$\bar \eta$,  $M$ is $Rad$ and  furthermore everything is graded.

Thus, in order to show that 
the radical in $\eta$ is
exactly two-dimensional in each degree $\ge3$, we form the  
graded vector space $Rad$ which is two-dimensional in each degree $\ge3$
(and one-dimensional in degree two) and make it to an
$\be$-module, which is easy. Then we define a function $\fe:\be\times\be\to Rad$.
To verify that $\fe$ is a
cocycle consists of checking a large number of cases. When this is
done 
one checks that the Lie algebra $\hat\eta=Rad\oplus\be$ is generated
in degree one and that the elements that correspond to
relations in $\eta$ are zero, which is easy. Now $\hat\eta$ is a
quotient of $\eta$ but also as a graded vector space $\eta$ is a
quotient of $\hat\eta$ and it follows that $\hat\eta$ and $\eta$ are
isomorphic.

We will use the same notation for elements in $\bar\eta$ as for
elements in $\eta$,
but we use capital letters instead. We will use Corollary \ref{cor} to
get a basis for $\be$ as a vector space. 
Put 
\begin{align*}
L_1&=\bigl(\F(C,D',G)/<C^2,[C,[D',G]]>\bigr)_{\ge3}\\
L_2&=\F(D,E)_{\ge3}\\
L_3&=\F(B,F)_{\ge3}
\end{align*}
Let the sequences of elements in $\be$,
$B_n,F_n,D_n,E_n$, be defined as (cf. Proposition \ref{bf})
$B_1=B,F_1=F,D_1=D,E_1=E$, $B_2=B^2, F_2=F^2, D_2=D^2, E_2=E^2$
and $B_n$ for $n\ge3$ is
obtained by successive multiplication to the left by $B,F$ alternatively, starting
with $F$, and similarly for the three other sequences. Also define the
sequences $C_n,G_n$ for $n\ge1$ by, $C_n=(D')^{n-1}C,\
G_n=(D')^{n-1}G$. 
\begin{dfn} The sequences of elements $B_n,F_n,D_n,E_n,C_n,G_n$ for
$n\ge2$ are called
$\fe$-threads. The set $I$ is the linear span of all iterated Lie
products of degree $\ge3$ of the generators $B,C,D,E,F,G$ 
which are not $\fe$-threads.   
\end{dfn}
\begin{lm}\label{ideal}
$$\begin{array}{rl}
(i)&\quad 
I \text{
is an ideal  
in $\be$.}\\
(ii)&\quad 
[\be_{\ge3},\be_{\ge3}]\subset I\\
(iii)&\quad 
\text{The $\fe$-threads are linearly independent modulo $I$.}\\
(iv)&\quad  
F.D_n=(-1)^nE_{n+1}\text{ mod }I,\quad n\ge1,\quad F.E_n\in I,\quad n>1\\
(v)&\quad 
B.E_n=(-1)^nD_{n+1}\text{ mod }I,\quad B.D_n\in I,\quad n\ge1\\
(vi)&\quad D,E,C,G\quad\text{annihilate \ }B_n,F_n \text{ \ for
all}\quad n\ge3\\
(vii)&\quad B.C_n=B.G_n=0,\quad F.C_n,F.G_n\in I,\quad n\ge1\\
\end{array}
$$
\end{lm}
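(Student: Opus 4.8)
The statement (i)--(vii) concerns the Lie superalgebra $\be=\bar\eta$, for which Corollary~\ref{cor} gives an explicit description in degrees $\ge3$ as the direct sum $(L_1\oplus L_2\oplus L_3)_{\ge3}$ with $L_1=\bigl(\F(C,D',G)/\langle C^2,[C,[D',G]]\rangle\bigr)_{\ge3}$, $L_2=\F(D,E)_{\ge3}$, $L_3=\F(B,F)_{\ge3}$. The whole lemma should be read ``modulo $I$'', and the cleanest route is to verify each clause \emph{inside each summand separately}, since $I$ is by definition spanned by the non-$\fe$-thread iterated Lie products, and the $\fe$-threads $B_n,F_n$ live in $L_3$, the threads $D_n,E_n$ in $L_2$, and $C_n,G_n$ in $L_1$. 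So the plan is: prove (iii) and the $L_3$-part of (i),(ii) by invoking Proposition~\ref{zero} verbatim (with $\F(B,F)$ there); prove the $L_1$- and $L_2$-analogues of (i),(ii) by the same kind of derivation argument; then read off (iv)--(vii) from the recursive definitions of the threads together with the operation tables already established for $\eta$.

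First I would dispose of clauses (i), (ii), (iii). For $L_3=\F(B,F)$ this is literally Proposition~\ref{zero}: the span of non-thread iterated products of length $\ge3$ is an ideal, and $B_n,F_n$ are linearly independent modulo it (they have distinct bidegrees). For $L_2=\F(D,E)$ the sequences $D_n,E_n$ are defined by the same alternating recursion, so the identical proof — build derivations $\phi_D,\phi_F$ (here $\phi_D,\phi_E$) into a ``thread module'' $R_1\xrightarrow{D}R_2\xrightarrow{E}R_3\xrightarrow{D}\cdots$ and take $I_2=\ker\phi_D\cap\ker\phi_E$ — works unchanged. For $L_1$ one has $C_n=(D')^{n-1}C$, $G_n=(D')^{n-1}G$; here the relevant module is the ``thread'' generated by $C$ and by $G$ under repeated action of $D'$, and one checks (using $C^2=[C,[D',G]]=0$, together with $c,g$ acting nilpotently as in the proof of Proposition~\ref{cd'g}) that everything of degree $\ge3$ not a power of $D'$ applied to $C$ or $G$ forms an ideal and that the $C_n,G_n$ are independent modulo it (again by bidegree). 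Assembling the three pieces via the direct sum decomposition of Corollary~\ref{cor} yields (i) and (iii); clause (ii), that $[\be_{\ge3},\be_{\ge3}]\subset I$, follows because a bracket of two elements of degree $\ge3$ has degree $\ge6$ and, when computed inside a single free (or near-free) summand, is a sum of iterated Lie products of length $\ge6$, none of which is a thread — threads being multiplicity-one in each degree while any nontrivial bracket produces products with the ``wrong'' pattern of generators.

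Next, clauses (iv)--(vii) are local computations in the individual summands. For (iv): $F.D_n$ — since $D_n\in L_2$ is the length-$n$ thread, $F$ acts on $\F(D,E)$ by $\ad_F$; because $f$ kills $d$ and sends $e\mapsto -E^2$ modulo what lands in $\rad(\eta)\subset I$ (see the $\F(D,E)$-block of the operation tables and Proposition~\ref{bf}), one gets $F.D_n\equiv (-1)^nE_{n+1}\bmod I$ by an easy induction tracking signs through the alternating recursion, and $F.E_n\in I$ for $n>1$ because applying $\ad_F$ to a thread that already ends the ``wrong'' way produces a non-thread product. Clause (v) is the mirror statement for $B$ acting on $E_n,D_n$. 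Clause (vi) says $D,E,C,G$ annihilate $B_n,F_n$ for $n\ge3$: in $\eta$ one has $c.\suba(b,f)_{\ge3}=g.\suba(b,f)_{\ge3}=0$ and $\suba(d,e).\suba(b,f)_{\ge3}\subset\rad(\eta)\subset I$ (Propositions~\ref{bf} and~\ref{cd'g}), so these all vanish modulo $I$. Clause (vii) says $B.C_n=B.G_n=0$ exactly and $F.C_n,F.G_n\in I$: here $b$ kills $c$ and $g$ and $b.d'=-c^2-d^2$, hence $b$ annihilates the entire $L_1$-thread; $f.d'=-e^2-cg$, and by Lemma~\ref{dd} ($\{d^2,e^2\}\cup\suba(d,e)_{\ge3}$ annihilates $\suba(c,d',g)_{\ge3}$) together with $f.c=f.g=0$, the action of $F$ on $C_n,G_n$ lands in the non-thread span $I$.

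**Main obstacle.** The genuinely delicate point is \emph{not} any single one of the seven bullets but the bookkeeping that keeps the three notions — ``modulo $I$'', ``inside summand $L_i$'', and ``modulo $\rad(\eta)$ in $\eta$'' — consistent: Corollary~\ref{cor} is an isomorphism only in degrees $\ge3$, and the operation tables in Section~\ref{sec:mod} describe how $b,c,d,e,f,g$ act on the generators $C,D',G,D,E,B,F$, so one must be careful that, say, ``$F.E_n\in I$'' really means the image in $\be$ of $\ad_F$ applied to the thread, not something computed naively in the free algebra where $I$ hasn't yet been quotiented. The safe bookkeeping device is to fix once and for all the identification $\be_{\ge3}\cong(L_1\oplus L_2\oplus L_3)_{\ge3}$, observe that $I$ is a direct sum $I=I_1\oplus I_2\oplus I_3$ of the corresponding non-thread ideals, and then every clause becomes a statement about the action of one generator on one thread inside one free (or one-relator) Lie algebra, where the sign conventions of the $\fe$-threads (left-multiplication, alternating the two generators, starting with the ``second'' one) must be matched precisely against the $\ad$-action — this sign-tracking through the recursion is the part most likely to hide an error, so I would do it explicitly for small $n$ ($n=2,3,4$) using \textbf{liedim} as a check before asserting the inductive step.
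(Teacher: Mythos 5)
Your proposal matches the paper's proof in strategy: both decompose $I$ as a direct sum of three thread-complement ideals, one per summand of $\be_{\ge3}\cong L_1\oplus L_2\oplus L_3$, use Proposition~\ref{zero} for $L_2,L_3$ and a multidegree argument for $L_1$ to get (i) and (iii), and read (iv)--(vii) off the action of $\be_1$ on the $L_i$ (you route (vi),(vii) through Propositions~\ref{bf}, \ref{cd'g} and Lemma~\ref{b} in $\eta$ rather than the module tables directly, but the two paths are equivalent since $\rad(\eta)$ dies in $\be$). One shared soft spot worth noting: both your ``wrong pattern of generators'' reasoning for (ii) and the paper's bidegree argument are incomplete as stated, since for $n,m$ both odd the bidegree of $[B_n,B_m]$ coincides with that of $B_{n+m}$ (e.g.\ $[B_3,B_3]$ and $B_6$ both have bidegree $(4,2)$); the conclusion $[L_3,L_3]\subset I_B\cap I_F$ nevertheless holds because one checks directly that every element of $\F(B,F)$ of degree $\ge3$ annihilates the thread module $R$ of Proposition~\ref{zero}, so $\phi_B$ and $\phi_F$ vanish on all brackets of degree-$\ge3$ elements, and similarly for $L_2$ (the $L_1$ case is genuinely settled by triple degree, since $[C_n,C_m]$ carries two $C$'s).
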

\begin{proof}
Let $I_D,I_E,I_B,I_F$ be the ideals in $L_2$
and $L_3$ respectively defined in
the proof of Proposition \ref{zero} such that in degree $n\ge3$, $L_2/I_D$
has basis $D_n$, $L_2/I_E$
has basis $E_n$, $L_3/I_B$
has basis $B_n$, $L_3/I_F$
has basis $F_n$. Let $I_C$ be the ideal in $L_1$ consisting of all
elements of degree $\ge3$ and of triple degree not of the form
$(1,n,0)$, where the triple degree of $C,D',G$ is
$(1,0,0),(0,1,0),(0,0,1)$ respectively. This is an ideal, since there
are no elements of triple degree $(0,n,0)$ for $n\ge3$. Similarly
$I_G$ is defined as all elements of degree $\ge3$ of triple degree not of the form
$(0,n,1)$. We have 
$$
I=(I_C\cap I_G)\oplus(I_D\cap
I_E)\oplus(I_B\cap I_F)
$$
which proves (i). 

We claim
that $L_3/I_B$ is abelian. It is bigraded (with $\bideg(B)=(1,0)$
and $\bideg(F)=(0,1)$) and $\bideg(B_n)=(n+2,n)$. The product
$[B_n,B_m]$ of
two basis elements have not the right bidegree, hence it must be
zero and the claim follows. Hence
$[L_3,L_3]\subset I_B$.  The same proof works for
$I_F$ and hence $[L_3,L_3]\subset I_B\cap I_F$. Analogously we get
$[L_2,L_2]\subset I_D\cap I_E$. The threads
$C_n$, $G_n$ are bases for $L_1/I_C$, $L_1/I_G$ in degree $n\ge3$. The
same argument as above proves $[L_1,L_1]\subset I_C\cap I_G$. Since
$\be_{\ge3}=L_1\oplus L_2\oplus L_3$ (ii) follows.

By Proposition \ref{zero}, For $n=2$ it follows from the basis for
$\be_2$. Suppose $n\ge3$.
$\{B_n,F_n\}$ is linearly independent 
modulo $I_B\cap I_F$ and the same is true for $\{D_n,E_n\}$. It is
also true 
for $\{C_n,G_n\}$ since $C_n$ and $G_n$ have different triple
degrees. Hence (iii) follows. 

(iv) and (v) are easily proven by induction, using the action of $B,F$
on $L_2$ and the definition of $B_n,F_n$.

(vi) and (vii) follows from the action of $D,E$ on $L_3$.

\end{proof}

Below, the definition of a function $\fe:\bar\eta\times\bar\eta\to
Rad$ is given, which factors through $\fe:\bar\eta/I\times\bar\eta/I\to
Rad$, where the $\bar\eta$-module $Rad$ has basis $R_2$ in degree 2 and basis $\{R_n,R_n'\}$ in
degree $n\ge3$ and $D.R_{2n}=R_{2n+1},\ D.R_{2n+1}'=R_{2n+2}',\
E.R_{2n+1}=R_{2n+2},\ E.R_{2n+2}'=R_{2n+3} $ for $n\ge1$ and all other
operations are defined as zero. It follows that
$(\be_{\ge3})Rad=0$ since $Rad$ is annihilated by $D^2,E^2$ and hence
by $L_2$. By Lemma \ref{fec} a basis for $\be/I$ is $B,C,D,E,F,G,B^2,F^2,D^2,E^2,DE,BF,DC,EG,G^2$ and $B_n,F_n,D_n,E_n,C_n,G_n$ for
all $n\ge3$. We define $\fe(x,y)=0$ if $x$ or $y$ is in $I$.

\vspace{10pt}
\begin{dfn} Let $m,n\ge1$
$$
\begin{array}{rclrcl}
\fe(D_m,F_{2n})&=&-R_{m+2n}' &\fe(D_m,B_{2n-1})&=&-R_{m+2n-1}\\
\fe(E_m,F_{2n-1})&=&R_{m+2n-1}'\quad \text{for }\ m\cdot n>1 &\fe(E_m,B_{2n})&=&R_{m+2n}\\
\fe(C_m,C_n)&=&\lambda_{m,n}R_{m+n}&\fe(E,CG)&=&-R_3'\\
\fe(C,G_n)&=&(-1)^nR_{n+1}'\quad \text{for }\ n>1&\fe(DE,CG)&=&-R_4'
\end{array}
$$
\end{dfn}
The coefficients $\lambda_{m,n}$ will be determined below. We have
$\lambda_{1,1}=2,\ \lambda_{2,1}=-\lambda_{1,2}=1$.
Except for symmetry, $\fe$
is defined as zero for all other pairs of basis elements of
$\be/I$. Observe that the definition concerning $D_m,E_m,B_n,F_n$
above for $m=1$ is in accordance with the result in Proposition \ref{bf}.

The
symmetric law and the cocycle condition reads:
\begin{eqnarray}
\nonumber
\fe(x,y)&=&-(-1)^{xy}\fe(y,x)\\
\label{coc} 
\fe([x,y],z)&=&\fe(x,[y,z])+(-1)^{yz}\fe([x,z],y)+\\
\nonumber
&&x\fe(y,z)-\fe(x,y)z+(-1)^{yz}\fe(x,z)y\\
\label{square}
\fe(x^2,y)&=&\fe(x,[x,y])+x\fe(x,y)-\frac{1}{2}\fe(x,x)y\quad \text{if
$x$ is odd}
\end{eqnarray}
where we have used that a left module over a Lie algebra also may be
considered as a right module by $mx=-(-1)^{xm}xm$. The equation \eqref{square} 
follows from \eqref{coc}.

These law impose some 
restrictions on the $\lambda_{m,n}$ and we will now prove  that there
is a  sequence $\lambda_{m,n}$
satisfying these restrictions and $\lambda_{1,1}=2$. 
\begin{lm}\label{lambda} There is a sequence of numbers $\lambda_{m,n}$
satisfying $\lambda_{1,1}=2$, $\lambda_{2,1}=1$ and the following conditions for all $m,n\ge1$.
\begin{align}
\label{even}
\lambda_{2m,2n}&=0\\
\label{symm}
\lambda_{m,n}&=-(-1)^{mn}\lambda_{n,m}\\
\label{rek}
\lambda_{m+1,n}&=\lambda_{m,n}+(-1)^{m+1}\lambda_{m,n+1}
\end{align}
\end{lm}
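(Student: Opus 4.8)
The plan is to construct the array from its first row and to whittle the three conditions down to something solvable by hand.

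First, note that \eqref{rek} may be read as a definition: once the sequence $\mu_n:=\lambda_{1,n}$ is fixed, setting $\lambda_{m+1,n}:=\lambda_{m,n}+(-1)^{m+1}\lambda_{m,n+1}$ produces an array satisfying \eqref{rek} everywhere. Iterating gives $\lambda_{m,n}=(P_m\mu)_n$, where $(S\mu)_n=\mu_{n+1}$ is the shift and $P_m=\prod_{j=2}^m\bigl(1+(-1)^jS\bigr)$, so that $P_{2\ell}=(1+S)(1-S^2)^{\ell-1}$ and $P_{2\ell+1}=(1-S^2)^{\ell}$; the hypothesis $\lambda_{2,1}=1$ forces $\mu_2=-1$. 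Now I would reduce the other two conditions to minimal cases. Since $1-S^2$ maps the space of sequences vanishing at even arguments into itself and $\lambda_{2m,2n}=\bigl((1+S)(1-S^2)^{m-1}\mu\bigr)_{2n}$, condition \eqref{even} in full is equivalent to the single family $\mu_{2n}+\mu_{2n+1}=0$ $(n\ge1)$. For \eqref{symm} I would run an induction on the first index: using \eqref{rek} both as stated and in the rearranged form $\lambda_{n,m+1}=(-1)^{n+1}(\lambda_{n+1,m}-\lambda_{n,m})$, together with \eqref{symm} in ``row $m$'', one checks that \eqref{symm} then holds in ``row $m+1$''; hence \eqref{symm} holds for all $(m,n)$ as soon as it holds for all $(1,n)$, i.e. as soon as $\mu_n=-(-1)^n(P_n\mu)_1$ for every $n$.

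It then remains to solve the first-row system $\mu_1=2,\ \mu_2=-1,\ \mu_{2n+1}=-\mu_{2n},\ \mu_n=-(-1)^n(P_n\mu)_1$. Writing $\alpha_j:=\mu_{2j}$ (so $\mu_{2j+1}=-\alpha_j$, $\alpha_1=-1$) and introducing a bookkeeping value $\alpha_0:=-2$ to absorb $\mu_1=2$, a short binomial-coefficient computation shows that the equation $\mu_n=-(-1)^n(P_n\mu)_1$ is \emph{the same} for $n=2\ell$ and for $n=2\ell+1$, and that it amounts to $\alpha_\ell=\sum_{j=0}^{\ell}(-1)^j\binom{\ell}{j}\alpha_j$ for all $\ell\ge0$ --- that is, $(\alpha_j)$ must be a fixed point of the alternating binomial transform. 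In terms of the formal generating function $\hat\alpha(x)=\sum_j\alpha_jx^j/j!$ this is the single identity $\hat\alpha(x)=e^x\hat\alpha(-x)$, whose solutions are exactly $\hat\alpha(x)=e^{x/2}g(x)$ with $g$ even; the only surviving constraint is $g(0)=-2$, so $g\equiv-2$ works and yields $\alpha_j=-2^{1-j}$, i.e. $\mu_n=(-1)^{n+1}2^{1-\lfloor n/2\rfloor}$. Feeding this back through \eqref{rek} gives the explicit array $\lambda_{m,n}=\varepsilon_{m,n}\,2^{1-\lfloor m/2\rfloor-\lfloor n/2\rfloor}$, where $\varepsilon_{m,n}=0$ if $m$ and $n$ are both even, $-1$ if $m$ is odd and $n$ even, and $1$ otherwise; one can then take this formula as the definition and verify \eqref{even}, \eqref{symm}, \eqref{rek} directly by running through the parity cases. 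I expect the main obstacle to be the sign bookkeeping in the propagation of \eqref{symm} (forcing every factor $(-1)^{mn}$ to cancel) together with the binomial identity that merges the two first-row families into one; once those are in place the rest is the one-line generating-function computation above.
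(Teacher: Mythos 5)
Your proposal is correct, and it takes a genuinely different route from the paper's. The paper's proof is a step-by-step construction by induction on $m+n$: for $m+n$ even the entries are defined by an explicit ad-hoc rule in each parity class, and for $m+n$ odd the recursion \eqref{rek} is used to propagate, with the entry $\lambda_{1,k-1}$ sometimes chosen arbitrarily and sometimes forced; the conditions are then verified at each stage. You instead reduce the entire problem to the first row by treating \eqref{rek} as a definition via the shift-operator polynomial $P_m$, translate \eqref{even} into the single family $\mu_{2n}+\mu_{2n+1}=0$, propagate \eqref{symm} by the standard row-by-row induction (I checked the sign bookkeeping: $\lambda_{n,m+1}=(-1)^{n+1+mn}\lambda_{m+1,n}$, which is exactly the required $-(-1)^{(m+1)n}$), and then discover that the surviving first-row constraint is the fixed-point equation $\alpha_\ell=\sum_{j=0}^\ell(-1)^j\binom{\ell}{j}\alpha_j$ of the alternating binomial transform. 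Solving this by the EGF identity $\hat\alpha(x)=e^x\hat\alpha(-x)$ yields the closed form $\lambda_{m,n}=\varepsilon_{m,n}\,2^{\,1-\lfloor m/2\rfloor-\lfloor n/2\rfloor}$, which I verified does satisfy $\lambda_{1,1}=2$, $\lambda_{2,1}=1$, and \eqref{even}, \eqref{symm}, \eqref{rek} in all four parity cases. The trade-off: the paper's argument exhibits the freedom in the construction (a whole family of valid $\lambda$'s, one free parameter per odd anti-diagonal with $r$ even), while your argument pins down one explicit closed-form solution, and in fact the generating-function derivation is only discovery --- once the formula is written down, direct verification of the three identities gives a much shorter self-contained proof than either route.
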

\begin{proof} Suppose the sequence is defined for all $m,n$ such that
$m+n<k$ and such that the conditions are fullfilled. Suppose first $k$ is even. We define
$\lambda_{1,k-1}=\lambda_{k-2,1}$ and for $m+n=k$, $m>1$ we define
$\lambda_{m,n}=\lambda_{m-1,n}$ for $m,n$ odd, and $\lambda_{m,n}=0$
for $m,n$ even. Then \eqref{even} holds and \eqref{rek} holds for
$m+n=k$ and $m,n$ odd. If $m,n$ are even, we have to prove that 
$0=\lambda_{m-1,n}+\lambda_{m-1,n+1}$. But
\begin{align*}
\lambda_{m-1,n}&=-\lambda_{m-2,n+1}\quad \text{by induction}\\
\lambda_{m-1,n+1}&=\lambda_{m-2,n+1}\quad \text{by definition}
\end{align*}

Also \eqref{symm} is true when $m=1,n=k-1$. We prove by
induction over $m$ that \eqref{symm} is true for all $m,n$ such that
$m+n=k$. We may suppose $m,n$ are odd and $>1$.
\begin{align*}
\lambda_{m,n}=\lambda_{m-1,n}=-\lambda_{n,m-1}=\lambda_{n-1,m}=\lambda_{n,m}
\end{align*}
Now suppose $k=2r+1$ is odd. We use \eqref{rek} to define $\lambda_{m,n}$
for $m+n=k$ and $m>1$. By making substitutions, it is possible to express
$\lambda_{m,n}$ in terms of $\lambda_{i,j}$ where $i+j=k-1$ and
$\lambda_{1,k-1}$. In particular, we have
$$
\lambda_{k-1,1}=\sum_{j=1}^{r}(-1)^{j+1}\lambda_{k-2j,2j-1}-(-1)^r\lambda_{1,k-1}
$$
It is easy to see, using \eqref{symm} that the sum $S$ above is zero when
$r$ is even, so in this case we get
$\lambda_{k-1,1}=-\lambda_{1,k-1}$, which is the symmetry law. We then
define $\lambda_{1,k-1}$ arbitrarily. If $r$ is odd, we define 
$\lambda_{k-1,1}=-\lambda_{1,k-1}=\frac{1}{2}S$. We now prove \eqref{symm} by
induction over $m$.
\begin{align*}
\lambda_{m,n}&=\lambda_{m-1,n}+(-1)^m\lambda_{m-1,n+1}=
-(-1)^{mn+n}\lambda_{n,m-1}+(-1)^{mn+n}\lambda_{n+1,m-1}=\\
&(-1)^{mn+n+n+1}\lambda_{n,m}=-(-1)^{mn}\lambda_{n,m}
\end{align*}  
\end{proof}

\begin{prp}\label{fec} The function $\fe$ is a cocycle.
\end{prp}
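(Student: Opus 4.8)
The plan is to prove the cocycle identity \eqref{coc} by a direct but carefully organized verification on a basis; the symmetric law is built into Definition~\ref{dfn}, and \eqref{square} is, as already noted, a formal consequence of \eqref{coc} (substitute $y=x$, use $2x^2=[x,x]$ together with the module axioms — this is where characteristic $0$ enters), so only \eqref{coc} requires work. Since $\fe$ is bilinear and, by definition, vanishes whenever one of its arguments lies in $I$, it descends to $(\be/I)\times(\be/I)$, and it suffices to test \eqref{coc} on triples $(x,y,z)$ drawn from the explicit basis of $\be/I$ recorded above: the six generators, the nine degree-$2$ basis vectors, and the thread families $B_n,F_n,D_n,E_n,C_n,G_n$ for $n\ge3$.

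The first step is to dispose of the generic case. If $x,y,z$ all lie in $\be_{\ge3}$ then \eqref{coc} reads $0=0$: each of $[x,y],[y,z],[x,z]$ lies in $I$ by Lemma~\ref{ideal}(ii), so every term of the form ``$\fe$ of a bracket'' vanishes, while $x\fe(y,z)$, $\fe(x,y)z$ and $\fe(x,z)y$ vanish because $\be_{\ge3}\cdot Rad=0$. Hence one may assume that some argument has degree $\le2$, and, since \eqref{coc} is invariant up to sign under permutations of $x,y,z$, that this argument is $z$. For such triples almost every term of \eqref{coc} is forced to vanish, since $\fe$ is nonzero on only a short list of pairs, since $Rad$ is annihilated by $\be_{\ge3}$ and carries only the actions $D\colon R_{2n}\mapsto R_{2n+1}$, $E\colon R_{2n+1}\mapsto R_{2n+2}$ and their primed analogues, and since most brackets of basis elements land in $I$; what survives is a short relation among thread elements and the $R_m,R_m'$.

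To collapse the remaining, a priori infinitely many, instances to a finite check I would induct on total degree, using three features: the generators act as derivations on $M_{\ge3}$, hence so does any Lie product of them; each thread of length $n\ge4$ is, modulo $I$, obtained from a thread of length $n-1$ by applying a generator (for $D_n,E_n,B_n,F_n$; cf.\ Lemma~\ref{ideal}(iv)--(vii) and the recursive definitions) or $D'=D+E$ (for $C_n=(D')^{n-1}C$, $G_n=(D')^{n-1}G$); and $R_m$ (resp.\ $R_m'$) is $D$ or $E$ applied to $R_{m-1}$ (resp.\ $R_{m-1}'$). Rewriting the arguments of \eqref{coc} in this way and pushing the inductive hypothesis through the cocycle identity reproduces the degree-$n$ instance from lower-degree ones, exactly in the spirit of the proofs of Lemma~\ref{threads} and Propositions~\ref{bf} and \ref{cd'g}. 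What is left is a finite base layer — \eqref{coc} for the generators, the fifteen degree-$\le2$ basis vectors, and the threads of small length — a long but routine list of identities, checkable by hand or corroborated with {\bf liedim}.

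The only triples on which \eqref{coc} says something beyond $0=0$ are those assembled from the $C$- and $G$-threads together with $C,D',G,DE,CG$, precisely because these are the pairs on which $\fe$ is simultaneously nonzero in several terms. Tracing \eqref{coc} through them produces exactly the recursion $\lambda_{m+1,n}=\lambda_{m,n}+(-1)^{m+1}\lambda_{m,n+1}$ of \eqref{rek}, the symmetry \eqref{symm}, and the vanishing \eqref{even}; that a sequence meeting all of these with $\lambda_{1,1}=2$ exists is Lemma~\ref{lambda}, so the verification closes with the $\lambda_{m,n}$ supplied there. I expect the real difficulty to be organizational rather than conceptual: keeping the Koszul signs correct while running \eqref{coc} simultaneously across $L_1,L_2,L_3$, and making certain that the finite base layer of low-degree identities — where a slip would be invisible — is complete. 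Isolating the $\lambda$-constraints and handing them to Lemma~\ref{lambda} is the one structurally interesting point, and it has already been prepared.
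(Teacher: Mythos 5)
Your high-level strategy matches the paper's: reduce modulo $I$, dispose of the generic vanishing case where all arguments lie in $\be_{\ge3}$, and hand the $\lambda$-constraints to Lemma~\ref{lambda}. But two things in the plan would need to be repaired before it becomes a proof.

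First, the inductive step is asserted, not established. You say that expressing a thread of length $n$ as a generator acting on a thread of length $n-1$ lets you ``push the inductive hypothesis through the cocycle identity,'' in the spirit of Lemma~\ref{threads} and Propositions~\ref{bf}, \ref{cd'g}. Those earlier inductions verify \emph{equalities inside the module} one degree at a time; the cocycle condition \eqref{coc} is a different kind of statement — it is the Jacobi identity for the candidate Lie algebra $\hat\eta=Rad\oplus\be$. A genuine reduction is available (Jacobi for $\hat\eta$ can be checked on triples where one argument is a degree-one generator, by the standard $\ad$-is-a-derivation argument), but that is not the reduction you describe, and even after that reduction one is still left with triples $(s,y,z)$ where $y,z$ run over all threads and one must verify \eqref{coc} uniformly in the thread indices. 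The paper avoids any induction here by arranging the \emph{definitions} of $\fe$ and of the $Rad$-action so that each main case $z\in\{C_m,G_m,D_m,E_m,B_m,F_m\}$ closes ``by definition'' for all $m,n$ simultaneously; the only recursion is the one on the $\lambda_{m,n}$, and that lives entirely inside Lemma~\ref{lambda}, not in the cocycle check.

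Second, you identify the $C$- and $G$-thread triples as the only ones where \eqref{coc} says more than $0=0$. That understates the case list considerably: the paper's cases I.1--I.14 (degree-$\le2$ triples such as $\{B,B,E\}$, $\{F,F,D\}$, $\{E,C,G\}$, $\{C,G,DE\}$, \ldots) and II.3--II.6 (the $D_m,E_m,B_m,F_m$ threads paired with $F,B,BF,DE$, etc.) all produce honest identities like $\fe(B_{2n+1},D_m)=\fe((-1)^mE_{m+1},B_{2n})$ or $\fe(B^2,E)=\fe(D^2,B)$, which are true because the tables were designed to make them true, not because both sides are zero. If you skip those, the verification is incomplete; if you run them, you are doing exactly the paper's exhaustive check. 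So the proposal is aimed correctly but, as written, it substitutes an unproved inductive shortcut for the systematic case enumeration that actually carries the weight of the proof.
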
 
\begin{proof} The cocycle condition \eqref{coc} is
(super)symmetric. Hence it is enough to consider unordered triples of
$x,y,z$ where not two even elements are equal. We will go through all
combinations of basis elements in $\be/I$. The main case will be when
one of $x,y,z$ is a $\fe$-thread.  If one of $x,y,z$ belongs to $I$,
all terms of \eqref{coc} will be zero by definition and Lemma
\ref{ideal} (i) and the fact that $(\be_{\ge3})Rad=0$. Thus, besides
the main case we only have to consider the case when $x,y,z$ all have
degree $\le2$ and none of them is a $\fe$-thread.

\vspace{10pt}
\noindent I. The  non-main case.

Suppose $x,y,z$ all have degree 1. It is easy to see that
the only cases when some term in \eqref{coc} is nonzero are
$\{B,B,E\}$, $\{F,F,D\}$, $\{D,D,B\}$, $\{E,E,F\}$, $\{C,C,D\}$ and
$\{E,C,G\}$.

\vspace{6pt}
\noindent
I.1 $x=y=B,z=E$.

We have to prove \eqref{square}, which gives
$$
\fe(B^2,E)=\fe(B,BE)+B\fe(B,E)-0=-\fe(B,D^2)=\fe(D^2,B)
$$

But by definition, $\fe(B^2,E)=-\fe(E,B^2)=-R_3$ and
$\fe(D^2,B)=-R_3$.

\vspace{6pt}
\noindent
I.2 $x=y=F,z=D$.

This gives
$$
\fe(F^2,D)=\fe(F,FD)+F\fe(F,D)-0=-\fe(F,E^2)=\fe(E^2,F)
$$

By definition, both sides are equal to $R_3'$.

\vspace{6pt}
\noindent
I.3 $x=y=D,z=B$.
$$
-R_3=\fe(D^2,B)=\fe(D,DB)+D\fe(D,B)-0=0-DR_2=-R_3
$$

\vspace{6pt}
\noindent
I.4 $x=y=E,z=F$.
$$
R_3'=\fe(E^2,F)=\fe(E,EF)+E\fe(E,F)-0=-\fe(E,CG)=R_3'
$$

\vspace{6pt}
\noindent
I.5 $x=y=C,z=D$.
\begin{align*}
0=&\fe(C^2,D)=\fe(C,CD)+C\fe(C,D)-\frac{1}{2}\fe(C,C)D=\\
&\lambda_{1,2}R_3-\frac{1}{2}\lambda_{1,1}R_2D=-R_3+R_3=0
\end{align*}

\vspace{6pt}
\noindent
I.6 $x=E,y=C,z=G$.
$$
0=\fe(EC,G)=\fe(E,CG)-\fe(EG,C)=-R_3'+R_3'=0
$$

\vspace{10pt}
Suppose $\deg(x)=\deg(y)=1$ and $\deg(z)=2$ and $z$ is not a
$\fe$-thread. We have the following cases to
consider: $\{B,D,BF\}$, $\{E,F,BF\}$, $\{C,C,DE\}$, $\{D,B,DE\}$,
$\{E,F,DE\}$, $\{C,G,DE\}$, $\{D,E,CG\}$.

\vspace{6pt}
\noindent
I.7 $x=B,y=D,z=BF$.
$$
0=\fe(BD,BF)=\fe(B,D_3)+\fe(-B_3,D)+0+0+0=0
$$
\vspace{6pt}
\noindent
I.8 $x=E,y=F,z=BF$.
$$
0=\fe(EF,BF)=\fe(E,-F_3)+\fe(E_3,F)+0+0+0=0
$$
\vspace{6pt}
\noindent
I.9 $x=C,y=C,z=DE$.
$$
0=\fe(C^2,DE)=\fe(C,-C_3)+0-\frac{1}{2}\fe(C,C)DE=(-\lambda_{1,3}+1)R_4
$$
which is true since, $\lambda_{1,3}=\lambda_{3,1}=\lambda_{2,1}=1$ by
Lemma \ref{lambda}.

\vspace{6pt}
\noindent
I.10 $x=B,y=D,z=DE$.
$$
0=\fe(BD,DE)=\fe(B,-D_3)+0-\fe(B,D)DE+0+0=R_4-R_4=0
$$

\vspace{6pt}
\noindent
I.11 $x=E,y=F,z=DE$.
$$
\fe(EF,DE)=0+\fe(F,-E_3)+0+0+0
$$
This follows since, $\fe(EF,DE)=-\fe(CG,DE)=-R_4'=-\fe(E_3,F)$.

\vspace{6pt}
\noindent
I.12 $x=C,y=G,z=DE$.
$$
R_4'=\fe(CG,DE)=\fe(C,-G_3)+0+0+0+0=R_4'
$$

\vspace{6pt}
\noindent
I.13 $x=D,y=E,z=CG$.
$$
-R_4'=\fe(DE,CG)=0+0+D\fe(E,CG)+0+0=-R_4'
$$

\vspace{10pt}
Suppose $\deg(x)=1$ or $\deg(x)=2$ and $\deg(y)=\deg(z)=2$ and $y,z$
are not
$\fe$-threads. Then $x$ and $y$ must be one of $DE,BF,CG,G^2$. But the
product of any two of these is in $I$ and hence all 
terms in \eqref{coc} are zero except for the case $x=D,E$, $y=DE$,
$z=CG$. But $D\fe(DE,CG)=0$, so $x=D$ does not give anything nonzero.

\vspace{6pt}
\noindent
I.14 $x=E,y=DE,z=CG$.
$$ 
0=\fe(EED,CG)=0+0+E\fe(DE,CG)-0+\fe(E,CG)DE=-R_5'+R_5'=0
$$

\vspace{10pt}
\noindent II. The main case.

By symmetry we may suppose $z$ is a $\fe$-thread. We will go through
all cases and find in each case the possible values of $x,y$ to get at
least one term nonzero in \eqref{coc}. 

\vspace{6pt}
\noindent
II.1 $z=C_m$.

By examining the multiplication tables for $\be$ one finds that the only cases to consider are when $x=D,E,DE$,
$y=C_n$, $z=C_m$ and then  
\eqref{coc} reads
$$
\fe(xy,z)=(-1)^{yz}\fe(xz,y)+x\fe(y,z)
$$

\vspace{6pt}
\noindent  II.1.1 $x=D$, $y=C_n$, $z=C_m$. 

If $n,m$ are even, all terms are zero by Lemma
\ref{threads} and \eqref{even}. If $n,m$ are both odd, we must prove
$$
\lambda_{n+1,m} =-\lambda_{m+1,n}+\lambda_{n,m}
$$
but this follows from \eqref{symm} and \eqref{rek}. If $n$ is even and
$m$ is odd, we are left with
$$
0=\lambda_{m+1,n}+0
$$
which is true by \eqref{even}. If $n$ is odd and $m$ is even, we are
left with $\lambda_{n+1,m}=0+0$ which again follows from \eqref{even}.

\vspace{6pt}
\noindent II.1.2 $x=E$, $y=C_n$, $z=C_m$. 
 
Suppose $n,m$ are even. Then we must prove
$$
\lambda_{n+1,m} =\lambda_{m+1,n}
$$
but this follows from \eqref{symm}, \eqref{even} and \eqref{rek}. If $n$ is even and
$m$ is odd, we are left with 
$$
\lambda_{n+1,m} =0+\lambda_{n,m}
$$
which is true by \eqref{even} and \eqref{rek}. If $n$ is odd and $m$
is even, we are left with
$$
0=\lambda_{m+1,n}+\lambda_{n,m}
$$
which follows from \eqref{symm}, \eqref{even} and \eqref{rek}. If
$n,m$ are odd, then all terms are zero.

\vspace{6pt}
\noindent II.1.3 $x=DE$, $y=C_n$, $z=C_m$. 

For all $n,m$ we get
$$
\lambda_{n+2,m}=(-1)^{nm}\lambda_{m+2,n}+\lambda_{n,m}
$$
This follows by applying \eqref{rek} twice and using \eqref{symm}.

\vspace{6pt}
\noindent
II.2. $z=G_m$.

The only cases to consider are when $x=D,E,DE$,
$y=C$, $z=G_m$ and in this case
\eqref{coc} reads
$$
0=(-1)^{yz}\fe(xz,y)+x\fe(y,z)
$$

\vspace{6pt}
\noindent
II.2.1 $x=D$, $y=C$, $z=G_m$.

This gives $0=0$ if $m$ is odd and when $m$ is even, we get
$$
0=\fe(G_{m+1},C)+D\fe(C,G_m)
$$
which is true by definition.

\vspace{6pt}
\noindent
II.2.2 $x=E$, $y=C$, $z=G_m$.

This gives $0=0$ if $m$ is even and when $m$ is odd we get
$$
0=-\fe(G_{m+1},C)+E\fe(C,G_m)
$$
which is true by definition.

\vspace{6pt}
\noindent
II.2.3 $x=DE$, $y=C$, $z=G_m$.

For all $m,n$ we get
$$
0=(-1)^m\fe(G_{m+2},C)+DE\fe(C,G_m)=-\fe(C,G_{m+2})+(-1)^mR_{m+3}'
$$
which is true by definition.

\vspace{6pt}
\noindent
II.3 $z=D_m$.

By Lemma \ref{ideal} and the multiplication table
$\be_1\times\be_2\to\be_3$ we get the following cases for $x,y$. One
is $F$ and the other is $B_{2n}$ or $F_{2n-1}$; one is $B_{2n-1}$ and
the other is $D,E,DE,BF$; one is $F_{2n}$ and the other is
$D,E,DE,BF$.

\vspace{6pt}
\noindent
II.3.1 $x=F$, $y=B_{2n}$, $z=D_m$.

We get
$$
\fe(B_{2n+1},D_m)=0+\fe((-1)^mE_{m+1},B_{2n})+0+0+0
$$
which is true by definition.

\vspace{6pt}
\noindent
II.3.2 $x=F$, $y=F_{2n-1}$, $z=D_m$.

Suppose $n>1$.
$$
\fe(F_{2n},D_m)=0+(-1)^m\fe((-1)^mE_{m+1},F_{2n-1})+0+0+0
$$
True by definition. Suppose $n=1$
$$
\fe(F^2,D_m)=\fe(F,(-1)^mE_{m+1})+0+0=\fe(E_{m+1},F)
$$
True by definition.

\vspace{6pt}
\noindent
II.3.3 $x=D$, $y=B_{2n-1}$, $z=D_m$.
$$
0=\fe(DB_{2n-1},D_m)=0+(-1)^m\fe(DD_m,B_{2n-1})+D\fe(B_{2n-1},D_m)
$$
If $m$ is even, then this is $0=0$. If $m$ is odd, then this gives
$R_{m+2n}-R_{m+2n}$.

\vspace{6pt}
\noindent
II.3.4 $x=E,DE$, $y=B_{2n-1}$, $z=D_m$.
 
Similar to II.3.3

\vspace{6pt}
\noindent
II.3.5 $BF$, $y=B_{2n-1}$, $z=D_m$.
$$
\fe((BF)B_{2n-1},D_m)=(-1)^m\fe(BFD_m,B_{2n-1})+0+0+0
$$
True since, $BFB_{2n-1}=B_{2n+1}$ and $BFD_{m}=-D_{m+2}$.

\vspace{6pt}
\noindent
II.3.6 $x=D$, $y=F_{2n}$, $z=D_m$.
$$
0=\fe(DF_{2n},D_m)=0+\fe(DD_m,F_{2n})+D\fe(F_{2n},D_m)+0+0
$$
If $m$ is even, both terms to the right are zero. If $m$ is odd, we
get $-R_{m+2n+1}'+R_{m+2n+1}'$.

\vspace{6pt}
\noindent
II.3.7 $x=E,DE$, $y=F_{2n}$, $z=D_m$.

Similar to II.3.6

\vspace{6pt}
\noindent
II.3.8 $x=BF$, $y=F_{2n}$, $z=D_m$.
$$
\fe((BF)F_{2n},D_m)=0+\fe((BF)D_m,F_{2n})+0+0+0=\fe(-D_{m+2},F_{2n})=R_{m+2n+2}'
$$
This is true since, $(BF)F_{2n}=F_{2n+2}$.

\vspace{6pt}
\noindent
II.4 $z=E_m$.

We get the following cases for $x,y$. One
is $B$ and the other is $F_{2n}$ or $B_{2n-1}$; one is $F_{2n-1}$ and
the other is $D,E,DE,BF$; one is $B_{2n}$ and the other is
$D,E,DE,BF$. The analysis in these cases are analogue to case II.3 (one
could also use the symmetry in the definition of $\fe$; the equalities
not concerning $C,G$ stay the same in degree $>2$ if $B$ and $F$ interchange and also
$D$ and $E$ and $R$ with $-R'$).

\vspace{6pt}
\noindent
II.5 $z=B_m$.

The possibilities in this case are: One is $D_n$ or $E_n$ the other is 
$D,E,B,F,DE,BF$. These cases have already been taken care of above.

\vspace{6pt}
\noindent
II.6 $z=F_m$.

The same possibilities as for $z=B_m$ and again these cases are
already checked.

\end{proof}

The fact that $\hat\eta=Rad\oplus\be$ is generated in degree one
follows easily since $Rad$ is generated by $R_2,R_3'$ and
$[C,C]_{\hat\eta}=\fe(C,C)=2R_2$ and $[C,[E,G]]_{\hat\eta}=R_3'$. 

Since $\fe(x,y)\ne0$ for $x,y$ of degree one only if $x=y=C$ or
$x=B,y=D$ it follows that all but one relation in $\eta$ correspond to
a relation in $\hat\eta$ since they hold in $\be$. The missing
relation is $BD+C^2$. This is valid since 
$$
[B,D]_{\hat\eta}+\frac{1}{2}[C,C]_{\hat\eta}=\fe(B,D)+\frac{1}{2}\fe(C,C)+[B,D]_{\be}+\frac{1}{2}[C,C]_{\be}=-R_2+R_2+0+0=0
$$
As was remarked in the beginning of this section, it follows that
$\hat\eta$ and $\eta$ are isomorphic since $\hat\eta$ is a quotient of
$\eta$ and on the same time, the dimension of $\hat\eta$ is at least
as big as the dimension of $\eta$  in each degree.

Thus the series for the enveloping algebra of $\eta$ will be a product of the series for the enveloping algebra of $\bar\eta$ and the infinite product 
$$
\frac{1}{1-t^2}\prod_{n=2}^{\infty}{(1+t^{2n-1})^2\over(1-t^{2n})^2}.
$$ 
Thereby, by Corolllary \ref{cor}, Theorem \ref{thm1} is
completely proved.

\section{Final remarks}
We have proved that the ring $R_{197}$ is a Gorenstein numerical semigroup ring
having an explicit transcendental Poincar\'e-Betti series, and we have said that the
same result remains true for the rings $R_n$ for $n=199,201,203,\ldots$.
Let us just illustrate this by comparing $R_{197}$ and $R_{199}$.
In both cases the rings $R_{197}/(a)$ and $R_{199}/(a)$ (where $a=t^{36}$) are ``the same''
but the gradings of the remaining generators $b,c,d,e,f,g,h,i,j,k,l$ are different:
Indeed the gradings are $(48,50,52,56,60,66,67,107,121,129,135)$ for $R_{197}/(a)$
and $(48,50,52,56,60,66,69,109,123,131,137)$ for $R_{199}/(a)$.
But we see in (\ref{eq5}) that if we choose the gradings $c_1=48,c_2=52$
we obtain that the gradings of $c=50,d=52,e=56,f=60,g=66,h=c_3,i=c_3+40,j=c_3+54,k=c_3+86,l=c3+68$
Thus if $h=c_3=67$, then $(i,j,k,l)=(107,121,129,135)$ i.e. the case $R_{197}/(a)$ which
we have studied above. But if $h=c_3=69$, then $(i,j,k,l)=(109,123,131,137)$, i.e. we are in case
$R_{199}/(a)$. This shows together with (\ref{eq5}) that the cases $R_{197}/(a)$ and $R_{199}/(a)$
can be regraded with $(b,c,d,e,f,g,h,i,j,k,l)=(1,1,1,1,1,1,1,2,2,2,2)$ so that they become isomorphic.
However the graded rings $R_{197}$ and $R_{199}$ can {\it not} be regraded so that they become isomorphic. 
Example: If we look for the possible  gradings of the the ideal J of (\ref{eq3})
we should as earlier solve the more complicated equations, where as above the relation $b^2-af$ is interpreted
as $2b-a-f=0$ etc.:

\begin{equation*}
\begin{aligned}[c]
2b-a-f& = 0\cr
c+c-b-d& = 0\cr
c+d-a-g& = 0\cr
d+d-b-e& = 0\cr
a+a+a-b-f& = 0\cr
d+e-b-f& = 0\cr
e+e-d-f& = 0\cr
e+f-c-g& = 0\cr
a+a+b-f-f& = 0\cr
a+a+c-e-g& = 0\cr
a+a+f-g-g& = 0\cr
a+b+c-h-h& = 0\cr
a+d+h-b-i& = 0\cr
\end{aligned}
\begin{aligned}[c]
c+i-a-j& = 0 \cr
a+e+h-d-i& = 0\cr
a+f+h-e-i& = 0\cr
b+c+h-a-k& = 0\cr
b+d+h-f-i& = 0\cr
a+g+h-b-j& = 0\cr
b+e+h-a-l& = 0\cr
c+j-a-l& = 0\cr
c+e+h-d-j& = 0\cr
g+i-d-j& = 0\cr
b+f+g-h-i& = 0\cr
c+f+h-b-k& = 0\cr
e+j-b-k& = 0\cr
\end{aligned}
\begin{aligned}[c]
d+f+h-c-k& = 0\cr
a+a+i-c-k& = 0\cr
b+g+h-d-k& = 0\cr
f+j-d-k& = 0\cr
c+g+h-b-l& = 0\cr
d+g+h-c-l& = 0\cr
e+k-c-l& = 0\cr
f+f+h-d-l& = 0\cr
g+j-d-l& = 0\cr
e+g+g-h-j& = 0\cr
e+g+h-f-k& = 0\cr
a+b+i-e-l& = 0\cr
f+g+h-a-a-j& = 0\cr
\end{aligned}
\begin{aligned}[c]
a+d+i-f-l& = 0\cr
g+k-f-l& = 0\cr
a+b+d+f-h-k& = 0\cr
a+a+k-g-l& = 0\cr
a+b+d+g-h-l& = 0\cr
a+d+f+g-i-i& = 0\cr
a+f+g+g-i-j& = 0\cr
b+h+j-i-k& = 0\cr
b+f+h+h-i-l& = 0\cr
j+j-i-l& = 0\cr
j+k-b-h-l& = 0\cr
f+h+k-j-l& = 0\cr
k+k-e-h-l& = 0\cr
a+i+j-k-l& = 0\cr
e+i+i-l-l& = 0\cr
\end{aligned}
\end{equation*}

The solutions of these $54$ equations in $12$ unknowns are:
$$
a=36c_1/67,b=48c_1/67,c=50c_1/67,d=52c_1/67,e=56c_1/67,f=60c_1/67,
$$
$$
g=66c_1/67,h=c_1,i=107c_1/67,j=121c_1/67,k=129c_1/67,l=135c_1/67
$$
where $c_1$ is a constant and since $67$ is a prime number, the minimal possible integral choice is $c_1=67$,
and so we find the unique grading of $R_{197}$.
But if we do the same reasoning for $R_{199}$ we still get 
$54$ relations between the $a,b,c,d,e,f,g,h,i,j,k$ corresponding to, but different from (\ref{eq3}):
$$
b^2-af, c^2-bd,cd - ag,d^2-be,de - bf,a^3-bf,e^2  - df, ef - cg, a^2b-f^2, a^2c - eg, a^2f -g^2 , a^2g -h^2,
$$
$$
adh - bi, ci - aj, aeh - di, afh - ei, bch - ak, bdh - fi, agh - bj, cj - al, beh - al, gi - dj,ceh-dj,dfg-hi,
$$
$$
 ej - bk, cfh - bk, a^2i - ck, dfh - ck, fj - dk, bgh - dk, cgh - bl, ek - cl, dgh-cl,gj - dl, f^2h - dl, egh - fk,
$$
$$
fg^2  - hj, abi - el, fgh - a^2j, gk - fl, adi - fl, abcg - hk, a^2k - gl,abeg - hl, acg^2  - i^2 , bdg^2  - ij, bhj - ik, j^2  - il,
$$
\begin{equation} \label{eq29}
 bfh^2  - il, jk - bhl, fhk - jl, k^2  - ehl, aij - kl,ei^2  - l^2 
 \end{equation}
and, when we want to analyze the possible gradings of $R_{199}$
we should now solve the 54 linear equations for the $a,b,c,d,e,f,g,h,i,j,k$  corresponding to (\ref{eq29}).
The solutions to these equations are now:
$$
a=12c_1/23,b=16c_1/23,c=50c_1/69,d=52c_1/69,e=56c_1/69,f=20c_1/23,
$$
\begin{equation} \label{eq30}
g=22c_1/23,h=c_1,i=109c_1/69,j=41c_1/23,k=131c_1/69,l=137c_1/69 
\end{equation}
But here $23$ is a prime number and the minimum possible choce of $c_1$ so that
all the gradings in (\ref{eq30}) become integers are $c_1=3\cdot 23=69$ and this gives again the original grading.

\end{document}